\newtheorem{theorem}{Theorem}[section]
\newtheorem{thm}{Theorem}[section]
\newtheorem{lem}[theorem]{Lemma}
\theoremstyle{definition}
\newtheorem{definition}[theorem]{Definition}
\newtheorem{exam}[theorem]{Example}
\theoremstyle{remark}
\newtheorem{rem}[theorem]{Remark}
\newtheorem{prop}[theorem]{Proposition}
\newtheorem{cor}[theorem]{Corollary}
\newtheorem{notat}[theorem]{Notation}
\newtheorem{conv}[theorem]{CONVENTION}
\newtheorem{fact}[theorem]{Fact}
\newenvironment{displaytext}
{\begin{equation} \begin{minipage}{0.9\linewidth}\em}
{\end{minipage} \end{equation}}
\newcommand{\Add}{\mathop{{\rm Add}}\nolimits}
\newcommand{\add}{\mathop{{\rm add}}\nolimits}
\newcommand{\RMod}{R\text{\rm -Mod}}
\newcommand{\Rmod}{R{\text{\rm -mod}}}
\newcommand{\ModR}{\text{\rm Mod-}R}
\newcommand{\Ab}{\mathop{\text Ab}}
\newcommand{\modR}{\text{\rm mod-}R}
\newcommand{\Zg}{{\rm Zg}}
\newcommand{\Mod}{{\rm Mod}}
\newcommand{\card}{|R|+\aleph_0}
\newcommand{\D}{\rm{D}}
\newcommand{\pp}{{\rm{pp}}}
\newcommand{\qf}{{\rm{qf}}}
\newcommand{\ann}{{\rm{ann}}}
\newcommand{\im}{{\rm{im}\,}}
\newcommand{\cal}{\mathcal}
\newcommand{\mtx}{\mathfrak}
\renewcommand{\bar}{\overline}
\newcommand{\br}{\bar}
\renewcommand{\phi}{\varphi}
\renewcommand{\rho}{\varrho}
\newcommand{\al}{\alpha}
\newcommand{\K}{\cal K}
\newcommand{\Lo}{$\cal L$-\,$\omega$}
\renewcommand{\to}{\longrightarrow}
\newcommand{\Div}{\rm{\bf Div}}
\begin{document}
\dedicatory{To Mike}
\title{Mittag Leffler modules and definable subcategories}


\author{Philipp Rothmaler}
\address{CUNY, The Graduate Center, 365 Fifth Avenue, Room 4208, NY, NY 10016}
\curraddr{}
\email{philipp.rothmaler@bcc.cuny.edu}
\thanks{Supported in part by PSC CUNY Award 64595-00 42.}


\subjclass[2010]{16D40, 16B70, 16D80, 16S90}

\date{Contemporary Mathematics {\bf 730}, 2019, 171--196.\\ https://doi.org/10.1090/conm/730/14716}

\begin{abstract}
We study (relative) $\mathcal K$-Mittag-Leffler modules as was done in the author's habilitation thesis,  rephrase old, unpublished results in terms of definable subcategories, and present  newer ones, culminating in a characterization of countably generated $\cal K$-Mittag-Leffler modules.
\end{abstract}

\maketitle


\section{Introduction}

 We call a module $M$ \emph{countably pure-separable by pure-projectives} if every countable subset of $M$ is contained in a pure submodule of $M$ that is itself pure-projective. (A module is said to be \emph{pure-projective} if it is projective with respect to pure-exact sequences, which is equivalent to being a direct summand of a direct sum of finitely presented modules.) More than four decades ago, Raynaud and Gruson showed:
\begin{displaytext}\label{RG}
 A module is  Mittag-Leffler iff it is countably pure-separable by (in fact, countably generated) pure-projectives.
\end{displaytext}
One may take this as a definition. It implies that a countably generated module is Mittag-Leffler if and only if it is pure-projective. So `Mittag-Leffler' is a generalization of `pure-projective' (and `flat Mittag-Leffler' of `projective').
 The author, more than two decades ago, proved:
 \begin{displaytext}\label{MT}

A module is Mittag-Leffler iff it is positively atomic, 
 \end{displaytext}
  in the sense that the positive primitive (henceforth \emph{pp}) type of every finite tuple in $M$ be finitely generated. This model-theoretic condition---to be explained below---makes many of the algebraic properties of Mittag-Leffler modules very transparent and easy to prove.

  For instance, since the pp type of a tuple  from $M$ does not change when passing to a pure submodule containing it  or when passing to a pure supermodule of $M$, one sees at once:
\begin{displaytext}\label{purechains}
 The union of a pure chain of Mittag-Leffler modules is Mittag-Leffler.
 \end{displaytext}
 Results (\ref{RG}) and (\ref{purechains}) imply:
\begin{displaytext}\label{RGdense}
The countably generated (automatically pure-projective) pure submodules of a Mittag-Leffler module form an $\aleph_1$-dense system,
\end{displaytext} 
in the sense of  \cite[Def.\,2.4]{HT}:
\begin{displaytext}\label{aleph1dense}
An $\aleph_1$-dense system of a module $M$ is an upward directed system $\cal C$ of submodules of $M$ satisfying
\begin{enumerate}  [\upshape (a)]
  \item every countable subset of $M$ is contained in a member of $\cal C$;
  \item $\cal C$ is closed under unions of countable chains.
\end{enumerate}
\end{displaytext}
Note, this definition is a slight weakening of  \cite[Def.IV.1.1]{EM}, where, as an additional condition,  every member of $\cal C$ is required to be countably generated (in which case directedness of $\cal C$ is entailed by (a)).


A Mittag-Leffler module $M$  trivially has $\cal C=\{M\}$ as an $\aleph_1$-dense system of Mittag-Leffler submodules, but the one given in (\ref{RGdense}) is much more meaningful and,
by (\ref{MT}),  the converse of   (\ref{RGdense})  is true as well:
\begin{displaytext}\label{densepure}
A module $M$ is a Mittag-Leffler module iff it has an $\aleph_1$-dense system of Mittag-Leffler modules each of which is \emph{pure} in $M$  iff it has an $\aleph_1$-dense system of (countably generated) pure-projective modules each of which is \emph{pure} in $M$.
\end{displaytext} 
Next we see how to drop `pure' in (\ref{densepure}).  In view of (\ref{MT}), a straightforward argument about finite generation of types in unions of directed systems of submodules yields:
\begin{displaytext}\label{3.6}
Suppose $M$ is the union of a directed system of submodules, $\cal C$, all of whose members are Mittag-Leffler.  Then $M$ is Mittag-Leffler iff every finite tuple $\bar{m}$ of $M$ is contained in a member $C_{\bar{m}}$ of $\cal C$ such that, for every $C\in \cal C$ containing $C_{\bar{m}}$, the pp type of $\bar{m}$ is the same whether considered in  $C_{\bar{m}}$ or in $C$.
  \end{displaytext}
 This condition requires precisely what is needed to make up for the dropped purity condition on the members of $\cal C$. Namely, it says that if a finite system of linear equations, given in matrix form as $\mtx B\bar y=\mtx A\bar m$, has a solution in $M$ (that is, in some  $C\supseteq C_{\bar{m}}$), it already has one in $C_{\bar{m}}$.
(A more general statement for direct limits, but essentially the same simple argument, has been made explicit in  \cite[Lemma 3.6]{PR}, see Lemma \ref{tails} below.)

If such a union $M=\bigcup\cal C$ is not Mittag-Leffler, there must be, by (\ref{3.6}), a tuple $\bar{m}$ in $M$ and an $\omega$-chain $C_0\subset C_1\subset C_2\subset \ldots $ in $\cal C$ such that the pp type of $\bar{m}$ keeps increasing (i.e., more and more equations   $\mtx B\bar y=\mtx A\bar m$   have a solution) when going up the chain. Then, by  (\ref{3.6}) applied to the directed system of the $C_i$ and their union $M'$, the module $M'$ is not Mittag-Leffler, that is:
\begin{displaytext}\label{omega}
If $\cal C$ is a  directed system of submodules of a module, then its union is Mittag-Leffler provided the union of every $\omega$-chain in $\cal C$ is.
  \end{displaytext}
This yields the missing direction of the strengthening of (\ref{densepure}) where purity is no longer required, see \cite[Cor.2.6]{HT} (whose proof is purely algebraic):
\begin{displaytext}\label{fulldense}
A module is Mittag-Leffler iff it has an $\aleph_1$-dense system of Mittag-Leffler modules  iff it has an $\aleph_1$-dense system of (countably generated) pure-projective modules.
\end{displaytext}

\textbf{The flat case.} One can  specialize 
 the above to flat Mittag-Leffler modules by replacing `pure-projective' by `projective' and `finitely presented' by `finitely generated projective' (see  \cite{habil} for this), because 
\emph{flat $+$ pure-projective $=$ projective} and hence \emph{flat $+$ finitely presented $=$ finitely generated projective}. In a `pure situation' this is immediate. For dense systems of not necessarily pure submodules, one may either repeat the above argument under the extra assumption of flatness, or, more elegantly, simply apply the following to the corresponding statements above. 
\begin{displaytext}\label{flatdense}
A module is flat if and only if it has an $\aleph_0$-dense system of flat submodules.
\end{displaytext}
Here, naturally, $\aleph_0$-dense systems are defined like $\aleph_1$-dense systems in (\ref{aleph1dense}) above with `countable' replaced by `finite.' Then condition $(b)$ becomes vacuously true and so an $\aleph_0$-dense system in a module $M$ is simply an upward directed collection $\cal C$ of submodules of $M$ such that every finite subset of $M$ is contained in a member of $\cal C$. (Again, one direction is trivial: take $\cal C=\{M\}$.)

Either way, one deduces at once the following flat versions of (\ref{fulldense}), which was given an algebraic proof in \cite[Thm.2.9]{HT}:
\begin{displaytext}\label{flatMLdense}
A module is flat and Mittag-Leffler iff it has an $\aleph_1$-dense system of flat Mittag-Leffler modules  iff it has an $\aleph_1$-dense system of (countably generated) projective modules.
\end{displaytext}

\textbf{Contents.} In this paper, all of the above, except the previously mentioned flat case, will be done for relativized  Mittag-Leffler modules as introduced and for the first time systematically studied in \cite{habil},  and continued in \cite{AH}, \cite{HT} and others. The concordance for the relativized versions is: 
(\ref{RG}) is Cor.\,\ref{Cor}(1) and Cor.\,\ref{puresep},  
(\ref{MT}) is Thm.\,\ref{MThm},
(\ref{purechains}) is (part of) Cor.\,\ref{Cor}(1),
(\ref{3.6}) is Lemma \ref{tails},
(\ref{omega}) is Cor.\,\ref{closedunion}, and
(\ref{fulldense}) is Cor.\,\ref{denserel} and Prop.\,\ref{ctblesep}.

In Section \ref{KMLSect},  $\cal K$-Mittag-Leffler modules are introduced and their main properties developed. The classical case is that of  $\cal K=\ModR$. This and two others,  $\cal K$, the class $\flat_R$ of flat right $R$-modules, and  $\cal K$,  the class $\sharp_R$ of absolutely pure right $R$-modules, are considered in Section \ref{specialcases}. The fundamental countable separation result (\ref{RG}) (and thus the left-to-right direction of (\ref{fulldense})) above is proved for arbitrary $\cal K$-Mittag-Leffler modules in Section \ref{separation}, see Prop.\,\ref{ctblesep} and Cor.\,\ref{puresep}. For this,  one needs to relativize purity as well, which is done in the first part of Section \ref{Fpure}, the rest of which is preparation for the final
Section \ref{countablygenerated}, where countably generated $\cal K$-Mittag-Leffler modules are characterized.

\textbf{A general remark.} The point of the model-theoretic perspective taken here and in the previous work \cite{habil} is not to make possible all sorts of relativizations---it has been subsequently shown, for instance in \cite{AH}, that this can be done completely algebraically. The point rather is, and this applies to Mittag-Leffler modules in general---relativized or not---that this perspective reveals some features of the concept that have gone unnoticed previously. Take, for instance, the ease with which direct limits of  Mittag-Leffler modules can be treated, as in (\ref{omega}) above (and Section  \ref{imm} below for relativizations),  or the role of definable subcategories as revealed by  the relativized version of  (\ref{MT}), Theorem \ref{MThm}  below (and Sections  \ref{consequences} and \ref{test}), which showed---if stated in different terms---that the very concept of $\cal K$-Mittag-Leffler module depended only on the definable subcategory $\cal K$ generates.

\textbf{Thanks!} I wish to thank the referee for helpful comments resulting in improved  readability of this work.

\section{Preliminaries}
Most of the preliminaries can be found in Mike Prest's two monographs, \cite{P} and \cite{P2}, where the latter is closer in style to current usage. I will largely use its notation---with a few exceptions that are rather self-explanatory. \cite{habil} may also serve as an introduction with all necessary detail.

I freely use the fundamental tool of \emph{pp formulas} (called  \emph{pp condition} in  \cite{P2})\footnote{One may work with Zimmermann's (equivalent) concept of \emph{finite matrix group} and the correponding \emph{p-functors} or the subgroups of finite definition of Gruson and Jensen \cite{GJ} instead.}
and the lattice order $\leq$ between them, often adorned by a class $\cal L\subseteq\RMod$.

\subsection{Modules and tuples}  Module means \emph{left module} (unless otherwise indicated) over an associative ring $R$ with $1$. 

A \emph{tuple} is a finite sequence. Given an $i$-tuple $\bar r = (r_0, \ldots, r_{i-1})$ of ring elements and an $i$-tuple $\bar m = (m_0, \ldots, m_{i-1})$ of elements of a module $M$, 
I use $\bar r \,\bar m$ to denote the corresponding linear combination, i.e.,  $\bar r \,\bar m=\sum_{j<i} r_i m_i$. In doing so I  assume that those tuples match and that the tuple $\br m$ is tacitly transposed to a column vector. (This is for left modules. For right modules it would, naturally, be the other way around.)

It is convenient to expand (this is the syntactic term) a module $M$ by a tuple $\bar m$ from $M$. The resulting object is denoted $(M, \br m)$, an \emph{$i$-pointed module}. Morphisms in the pointed category are supposed to send each entry of a (source) tuple to the corresponding  entry of the (target) tuple.


I use $\langle\, \rangle$ for generation of various objects.  If $C\subseteq M$, then , or more precisely, $\langle\, C\rangle_M$, denotes the submodule of $M$ generated by $C$. In the lattices of pp formulas (see below), 
specifically, for finite generation of types, these symbols are used thus: $\langle \phi\rangle_{\cal L} = \{\psi\,|\, \phi\leq_{\cal L}\psi\}$. For a class $\cal K$ of modules, $\langle\, \cal K\rangle$ denotes the definable subcategory generated by $\cal K$, see Section \ref{defcat} below.

$\Add({\cal K})$    (resp., $\add({\cal K}))$ denotes the smallest class of modules containing $\cal K$  that is closed under  direct summands and (resp., finite) direct sum. 
$\Zg_R$ stands for the Ziegler spectrum of $\RMod$, that is the space of  indecomposable pure-injective left $R$-modules as introduced in \cite{Zie}, cf.\, \cite{P} or \cite{P2}.

\subsection{Formulas and types} The atomic formulas (in the common first-order language of $R$-modules where the scalars figure as unary function symbols) are precisely the linear equations over $R$.  But for us the more important syntactic object are the finite conjunctions of atomic formulas, which are also known as \emph{quantifier-free pp formulas},  the term I use. These are the finite (homogeneous) systems of  linear equations (over the ring). We write them in matrix form with two kinds of variables, $\mtx A \bar x \doteq\mtx B \bar y$, where $\mtx A$ and $\mtx  B$ are matching matrices  over $R$. Given such a  quantifier-free pp formula $\alpha=\alpha(\bar x, \bar y)$, we obtain a typical \emph{pp formula} by existentially quantifying out $\br y$, that is,  $\exists \bar y\alpha$, or,  more explicitly,  $ \exists \bar y\alpha(\bar x, \bar y)$, which we also write as  $\mtx B | \mtx A \bar x$.  

Denote this formula by $\phi$ or $\phi(\bar x)$.  In any given module $M$ it defines the set, $\phi(M)$, of all $l(\bar x)$-tuples $\br a$ for which the inhomogeneous system  $\alpha(\bar a, \bar y)$ has a solution  in $M$. Obviously, $\phi(M)$ forms an additive group, in fact, a subgroup of $M^{l(\bar x)}$.  It is  called a(n \emph{$l(\bar x)$-place}) \emph{pp subgroup} of $M$. Clearly, $\phi(M)$ is the projection of $\alpha(M)$ onto the first $l(\bar x)$ coordinates.

It is easily verified that the $n$-place pp subgroups of $M$   form a sublattice, denoted by $\Lambda^n(M)$, of the subgroup lattice of $M^n$. 

In fact, the $n$-place pp formulas themselves form a lattice, whose meet is conjunction $\wedge$ and whose join is $+$. We  denote this lattice  by $_R\Lambda^n$ (and for right $R$-modules by $\Lambda^n_R$). It turns into $\Lambda^n(M)$ when specialized at $M$. When we specialize it at a class $\cal L$, we obtain a lattice whose order plays a major role in this work and is discussed in the next subsection. 

A \emph{pp $n$-type} is a subset of $_R\Lambda^n$. Of particular importance are those that form a filter in $_R\Lambda^n$, as  the \emph{pp type} of a  given  $n$-tuple $\br a$ in $M$, denoted  $\pp_M(\bar a)$, which is, by definition, the collection of all $n$-place pp formulas $\br a$ satisfies in $M$. The \emph{quantifier-free pp type} of  $\br a$ in $M$, denoted $\qf_M(\bar a)$, is the subset  consisting of all quantifier-free pp formulas in  $\pp_M(\bar a)$.

It is easily seen that every pp formula (and thus every pp type) constitutes a subfunctor of the forgetful functor $\RMod \to \Ab$, from left $R$-modules to abelian groups.  

While often times  the actual lengths  of tuples are irrelevant,  care has to be taken, however, that they match. It is assumed that they (as well as sizes of corresponding matrices) \emph{always} do.

\subsection{The lattice order}\label{latord}
Given pp formulas  $\phi$ and $\psi$, I use $\phi\leq_{\cal L}\psi$ to mean that the sentence $\forall \bar x (\phi(\bar x)\rightarrow\psi(\bar x))$ is true in all members of $\cal L$. We say $\psi$ is \emph{$\cal L$-implied} by $\phi$. When the class $\cal L$ is a singleton $L$, I simple write $\phi\leq_{L}\psi$ for $\phi\leq_{\{L\}}\psi$. When $\cal L=\RMod$, the subscript will be dropped.

Equivalence is denoted by $\sim$  (which stands for `$\leq$ \emph{and} $\geq$'),  adorned or unadorned. So to be  \emph{$\cal L$-equivalent}, i.e., $\phi\sim_{\cal L}\psi$, means that  $\phi$ and $\psi$ are equivalent everywhere in ${\cal L}$ (hence everywhere in $\langle \cal L\rangle$, see Section \ref{defcat}).

All this extends to types in an obvious fashion. Given two pp-$n$-types $p$ and $q$, I say $q$ is \emph{$\cal L$-implied} by $p$ and write $p\leq_{\cal L} q$ if, in every member $L$ of $\cal L$, every realization (= solution) of $p$ realizes $q$ as well. (That is, $p\leq_{\cal L} q$ really means $\bigwedge p  \leq_{\cal L} \bigwedge q$.) Note, if $p=\pp_L(\bar a)$ with $L\in \cal L$, then $p\leq_{\cal L} q$ if and only if $q\subseteq p$ (iff $p\leq  q$), but if $p=\pp_M(\bar a)$ with $M$ \emph{not} in $\cal L$, no such relationship is entailed in general.
The proper context for this relation between types is therefore that of the definable subcategory generated by $\cal L$. 

\subsection{Definable subcategories}\label{defcat}
A \emph{definable subcategory} of left $R$-modules is a full subcategory of $\RMod$ whose object class is the model class of a set of pp implications, i.e., of statements of the form $\forall \bar x (\phi(\bar x)\rightarrow \psi(\bar x))$ where $\phi$ and $\psi$ are pp formulas whose free variables are among those of $\bar x$. This is equivalent to  being closed under direct product, direct limit and pure submodule, cf.\  \cite{purity} or \cite[Thm.3.4.7]{P2}. 

It is clear from the definition that every class  $\cal L$ of left $R$-modules is contained in a smallest definable subcategory (namely the intersection of all of those that contain $\cal L$), which is denoted by $\langle \cal L \rangle$ and called the  \emph{definable subcategory generated by $\cal L$}. Here is a concrete way to obtain it: if ${\rm T}_{\cal L}$ denotes the set of axioms $\{\forall \bar x (\phi(\bar x)\rightarrow \psi(\bar x))\,|\, \phi\leq_{\cal L}\psi\}$ (together with the axioms for left $R$-modules), $\langle \cal L \rangle$ is the class of models of ${\rm T}_{\cal L}$, i.e., the class of modules for which all of those implications hold. That is, $C\in\langle\cal L\rangle$ iff $\phi\leq_{\cal L}\psi$ implies $\phi\leq_{C}\psi$.

Note, two classes $\cal L$ and $\cal L'$ generate the same definable subcategory if and only if the corresponding lattice orderings on pp formulas, $\leq_{\cal L}$ and $\leq_{\cal L'}$, are the same. In particular, $\leq_{\cal L}$ and $\leq_{\langle\cal L\rangle}$ are the same---at least on formulas. 

As a  special case  we have that $\leq_{\RMod}$, $\leq_{\Rmod}$ and $\leq_{_R\Zg}$ (with $_R\Zg$ the left Ziegler spectrum of $R$) are the same. 
More cases are listed in Corollary \ref{sameside} below.

 The lattice orderings of $\cal L$-implication between types as introduced above behave better when the context indicated by the subscript is a definable subcategory (as then compactness of first order logic is available).

\begin{rem}\label{Limpltypes}
Let $p$ and $q$ be pp types of the same arity with $p$ closed under finite conjunction. Then  $p\leq_{\langle\cal L\rangle}q$ if and only if 
for all $\psi\in q$ there is $\phi\in p$ such that $\phi\leq_{\cal L}\psi$.

First of all, $p \leq_{\cal L} q$ if and only if $p \leq_{\cal L} \psi$ for all $\psi\in q$ (and this is true for \emph{any} $\cal L$). But as $\langle\cal L\rangle$ is first order axiomatizable
, the statement $p \leq_{\langle\cal L\rangle} \psi$ is equivalent to the inconsistency  of $p\cup \{\neg\psi\}$ (with  ${\rm T}_{\cal L}$). By compactness, we can replace $p$ by a finite subset. Finite conjunctions of pp formulas being pp, a single $\phi$ from $p$ will do. But inconsistency of $\{\phi\wedge\neg\psi\}$ with  ${\rm T}_{\cal L}$ means 
$\phi\leq_{\cal L}\psi$.
\end{rem}

\begin{rem} This may be a good opportunity to emphasize why sometimes $\langle\cal L\rangle$ and sometimes only $\cal L$. 
In order  to apply compactness to a statement $p \leq_{\cal X} \phi$ when $p$ is an infinite type, we need $\cal X$ to be axiomatizable (so we take $\cal X=\langle\cal L\rangle$). In contrast, for $\phi  \leq_{\cal X}  q$  it doesn't matter, because all that says is that $\phi  \leq_{\cal X} \psi$ for every $\psi\in q$, so we can go interchangeably  with  $\cal X=\langle\cal L\rangle$ or  $\cal X=\cal L$.
\end{rem}

\begin{rem}\textbf{}
\begin{enumerate}[\upshape(1)]
\item 
\cite[Fact 7.7.]{purity} about pure embeddability of any structure from $\langle \cal K\rangle$ in a reduced product of structures from $\cal K$ shows that in order to generate the definable subcategory, one may first take all direct products, then all direct limits, and finally all pure substructures; see also the note before \cite[Prop.3.4.9]{P2}. This is true for any first-order structures, not just modules, cf.\,\cite{purity}.
 \item It is easy to check that for $\cal K\subseteq\cal K'\subseteq\Add(\cal K)$, the preorders $\leq_\cal K$ and $\leq_{\cal K'}$ are the same, hence  $\cal K$,  $\cal K'$, and $\Add(\cal K)$ generate the same definable subcategories.
\end{enumerate}
\end{rem}

\subsection{Elementary duality} 
\emph{Elementary duality} as introduced by Prest and further developed by Herzog is a major tool to switch sides. Namely, for all $n$, this duality, denoted by $\D$,  constitutes an anti-isomorphism between the  lattices $_R\Lambda^n$ and   $\Lambda^n_R$, see \cite{P}, \cite{P2},  \cite{Her},  \cite{ZZ-H}, or \cite{habil} for an introduction.

Elementary duality turns the relation $\leq_{\cal K}$ upside down in $\D\cal K$, the class of all character duals of members of $\cal K$ (here we use character modules obtained by hom-ing into $\mathbb Q/\mathbb Z$). More precisely,  $\phi\leq_{\cal K}\psi$ if and only if $\D\psi\leq_{\D\cal K}\D\phi$.

The use of $D$ is illustrated by the following elegant criterion for a tensor to be zero. Note that we take the same liberty as in linear combinations and write $\bar n \otimes \bar m$ instead of  $\sum_{j<i} n_i  \otimes m_i$ (assuming, as always, that the tuples match).

\begin{fact}[Herzog's Criterion]\label{HC} 
Given  a $k$-tuple  $\bar n$ in a right $R$-module $N$ and  a $k$-tuple $\bar m$ (of the same length) in a  left $R$-module $M$, the following are equivalent.
\begin{enumerate}[\upshape(i)]
\item $\bar n \otimes \bar m=0$
 \item There is a (left) $k$-place pp formula $\phi$ such that $\bar n\in\D\phi(N)$ and  $\bar m\in\phi(M)$.
 \item There is a (right) $k$-place pp formula $\phi$ such that $\bar n\in\phi(N)$ and  $\bar m\in\D\phi(M)$.
\end{enumerate}
\end{fact}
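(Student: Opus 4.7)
The equivalence (ii)$\Leftrightarrow$(iii) is simply the symmetry of elementary duality, i.e.\ $\D\D\phi\equiv\phi$ together with the swap of left/right sides, so the substance lies in (i)$\Leftrightarrow$(ii).

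My plan for (ii)$\Rightarrow$(i) is a short direct computation using the middle $R$-linearity of the tensor product. I would put $\phi$ in the normal form $\phi(\bar x)\equiv\exists\bar y\,(\mtx A\bar x=\mtx B\bar y)$ with matrices $\mtx A,\mtx B$ over $R$ of appropriate sizes; its dual then takes the right-pp normal form $\D\phi(\bar x)\equiv\exists\bar z\,(\bar z\mtx A=\bar x\wedge\bar z\mtx B=0)$ with $\bar z$ a row. Picking witnesses $\bar y_0\in M^s$ (so $\mtx A\bar m=\mtx B\bar y_0$) and $\bar z_0\in N^l$ (so $\bar z_0\mtx A=\bar n$ and $\bar z_0\mtx B=0$), I compute
\[\bar n\otimes\bar m=(\bar z_0\mtx A)\otimes\bar m=\bar z_0\otimes(\mtx A\bar m)=\bar z_0\otimes(\mtx B\bar y_0)=(\bar z_0\mtx B)\otimes\bar y_0=0.\]

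For the nontrivial direction (i)$\Rightarrow$(ii) I would proceed in two stages. First I would reduce to $M$ being finitely presented: writing $M=\varinjlim F_i$ as a directed colimit of finitely presented modules with canonical maps $f_i\colon F_i\to M$ and transitions $f_{ij}\colon F_i\to F_j$, I lift $\bar m$ to some $\bar a\in F_i^k$ with $f_i(\bar a)=\bar m$. Since $N\otimes_R-$ commutes with direct limits, the vanishing $\bar n\otimes\bar m=0$ in $N\otimes M$ forces $\bar n\otimes f_{ij}(\bar a)=0$ in $N\otimes F_j$ for some $j\geq i$. Because pp formulas are preserved by module homomorphisms, a formula $\phi$ witnessed on the pair $(F_j,f_{ij}(\bar a))$ automatically becomes a witness on $(M,\bar m)$ via $f_j$.

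Once $M$ is finitely presented, I would pick an explicit free presentation $R^s\xrightarrow{\mtx D}R^k\xrightarrow{\pi}M\to 0$ with $\bar m=\pi(\bar e)$ for $\bar e$ the full standard basis of $R^k$ (enlarging the tuple with generators of $M$ if necessary and projecting onto the original coordinates at the end). Applying $N\otimes_R-$ yields the right-exact sequence $N^s\to N^k\to N\otimes_RM\to 0$ whose first map is induced by $\mtx D$; under the identification $N^k\cong N\otimes R^k$, the element $\bar n$ corresponds to $\bar n\otimes\bar e$, whose image in $N\otimes M$ is $\bar n\otimes\bar m$. Thus $\bar n\otimes\bar m=0$ is equivalent to $\bar n$ lying in the image of this first map, which is a right pp condition $\psi(\bar x)\equiv\exists\bar z\,(\bar z\mtx D=\bar x)$ on $\bar n$. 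Setting $\phi:=\D\psi$, it remains to verify that $\bar m\in\phi(M)$, or equivalently, to identify $\phi$ as the (single) pp formula generating the pp type of $\pi(\bar e)$ in the finitely presented $M$. The main obstacle is precisely this matching step: checking at the matrix level, under the paper's conventions for $\D$, that $\D\psi$ records exactly the linear relations inherited by $\pi(\bar e)$ modulo $\mtx DR^s$. This is routine matrix bookkeeping, but it is exactly where the content of elementary duality is consumed.
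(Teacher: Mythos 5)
The paper does not actually prove this statement: it is quoted as a known Fact (Herzog's criterion, with references to Herzog and Prest for elementary duality), so there is no in-paper argument to compare yours against. Your proposal is the standard proof and is correct in outline. The middle-linearity computation for (ii)$\Rightarrow$(i) is exactly right (and is, in effect, the verification that the matrix form you chose for $\D\phi$ is the correct one), as is the reduction of (i)$\Rightarrow$(ii) to finitely presented $M$ via $N\otimes_R-$ commuting with direct limits and preservation of pp formulas under the canonical maps. Two spots need more care than you give them. First, the padding step: if the $k$-tuple $\bar m$ does not generate $M$, you extend it to a generating tuple $(\bar m,\bar m')$ and must correspondingly extend $\bar n$ to $(\bar n,\bar 0)$ so that the tensor is unchanged; to descend from the resulting formula in the longer tuple of variables back to a $k$-place formula you need the duality rule exchanging existential quantification with specialization at $0$, namely $\D\bigl(\exists\bar x'\,\phi(\bar x,\bar x')\bigr)(\bar x)\sim\D\phi(\bar x,\bar 0)$ --- this is more than ``projecting onto the original coordinates.'' Second, the identification $\D\bigl(\exists\bar z\,(\bar z\mtx D\doteq\bar x)\bigr)\sim(\mtx D\bar x\doteq 0)$ is the one place where the definition of $\D$ is genuinely consumed, as you say; once it is in hand, $\bar m\in\D\psi(M)$ is immediate because the rows of $\mtx D$ are by construction relations on $\pi(\bar e)$, and you land directly in clause (iii), with (ii) following from the (correctly dispatched) equivalence (ii)$\Leftrightarrow$(iii). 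Both points are routine consequences of the duality calculus the paper already presupposes, so I would count your proof as complete once they are written out.
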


\begin{definition} By  \emph{definably dual classes}  we mean classes  ${\cal K}\subseteq\ModR$ and ${\cal L}\subseteq\RMod$ such that $\D\langle\cal  K  \rangle= \langle\cal  L  \rangle$.
\end{definition}

Since elementary duality goes both ways and is an involution,  this definition is completely symmetric. That is, $\K$ and $\cal L$ are definably dual iff $\langle\cal  K  \rangle=\D\langle\cal  L  \rangle$.

\begin{cor}
Two families, ${\cal K}$ and ${\cal L}$,  of right, resp.\,left  $R$-modules are definably dual, provided any of the following holds.

\begin{enumerate}[\upshape(a)]
\item 
$\phi\leq_\cal L\psi$ iff $\D\psi\leq_\cal K\D\phi$, for all left pp formulas $\phi$ and $\psi$. 
 \item ${\cal L}$ consists of all character duals of members of ${\cal K}$.
 \item Any of the corresponding symmetric statements.
\end{enumerate}
\end{cor}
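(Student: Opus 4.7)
The plan is to reduce each of (a), (b), (c) to the criterion recorded in Section~\ref{defcat}---that two classes generate the same definable subcategory iff they induce the same lattice order on pp formulas---combined with the fundamental duality relation already cited: for any class $\cal M$ of modules on either side, $\phi\leq_{\cal M}\psi$ iff $\D\psi\leq_{\D\cal M}\D\phi$.

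For (a), I would combine the hypothesis with that duality formula applied to $\cal L$ itself. The formula reads $\phi\leq_{\cal L}\psi \iff \D\psi\leq_{\D\cal L}\D\phi$, while (a) reads $\phi\leq_{\cal L}\psi \iff \D\psi\leq_{\cal K}\D\phi$. Chaining these yields $\D\psi\leq_{\D\cal L}\D\phi \iff \D\psi\leq_{\cal K}\D\phi$ for all left pp formulas $\phi,\psi$, and since $\D$ is a bijection between left and right pp formulas, this says $\leq_{\D\cal L}$ and $\leq_{\cal K}$ coincide on every pair of right pp formulas. By the criterion, $\langle\D\cal L\rangle=\langle\cal K\rangle$. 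Using that $\D$ is an involution on definable subcategories (so $\D\langle\D\cal L\rangle=\langle\cal L\rangle$, itself a consequence of the same lattice-order bookkeeping), one reaches $\D\langle\cal K\rangle=\langle\cal L\rangle$, the assertion of definable duality.

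For (b), the hypothesis is $\cal L=\D\cal K$, so it suffices to show $\D\langle\cal K\rangle=\langle\D\cal K\rangle$. By the duality formula, $\leq_{\D\cal K}$ on right pp formulas is the image of $\leq_{\cal K}$ under the anti-isomorphism $\D$, and similarly $\leq_{\D\langle\cal K\rangle}$ is the image of $\leq_{\langle\cal K\rangle}$. Since a class and the definable subcategory it generates always agree on their induced lattice order, $\leq_{\cal K}=\leq_{\langle\cal K\rangle}$; hence $\leq_{\D\cal K}=\leq_{\D\langle\cal K\rangle}$, yielding the desired equality as definable subcategories. For (c), swapping the roles of $\cal K$ and $\cal L$ (and invoking that $\D$ is its own inverse) reproduces the argument on the other side.

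I do not anticipate a genuine obstacle; the only point requiring care is reading $\D\langle\cdot\rangle$ as the definable subcategory produced by applying character duality and then closing up (equivalently, the dual definable subcategory under the lattice anti-isomorphism), which is the standard interpretation imposed by the paper's framework on elementary duality.
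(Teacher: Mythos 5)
The paper states this corollary without proof, treating it as an immediate consequence of the duality formula $\phi\leq_{\cal M}\psi\iff\D\psi\leq_{\D\cal M}\D\phi$ and of the fact (recorded in Section~\ref{defcat}) that two classes generate the same definable subcategory iff they induce the same order on pp formulas. Your argument is exactly that intended reduction, correctly carried out for (a), (b), and (c), including the right reading of $\D\langle\cdot\rangle$.
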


\begin{conv}\label{conv}
Unless indicated otherwise, $\K$ and $\cal L$ are \emph{\bf definably dual} classes of $R$-modules in the sense of the above definition.
\end{conv}

Prominent examples of this situation are the flat right $R$-modules $\cal K={\flat_R}$ and the absolutely pure left $R$-modules $\cal L={_R\sharp}$  \cite[Prop.3.4.26]{P2}, and the symmetric situation $\cal K=\sharp_R$ and $\cal L={_R\flat}$, considered in Sections  \ref{flat-ML} and \ref{sharp-ML} respectively. See \cite{Tf} for other examples.

\subsection{Finite generation of types}
Following \cite{P}, a pp type $p$ is called \emph{finitely generated} if there is a pp formula $\phi$ with $\langle \phi\rangle=p$. I emphasize that this means that $\phi\leq p$ \emph{and} $\phi\in p$. In \cite[Sect.2.1]{habil} this was relativized to any context $\cal L\subseteq\RMod$. Since there is a subtlety with partial types here (even when the context $\cal L$ is all of $\RMod$), let me be very precise. Consider $p$, the pp type of  a tuple $\br a$ in a module $M$, i.e., $p=\pp_M(\br a)$. A subtype $q\subseteq p$ is said to be \emph{$\cal L$-finitely generated} if $q$ is  $\cal L$-implied by some $\phi\in p$, i.e., when there is $\phi\in p$ such that $q\subseteq\langle \phi\rangle_{\cal L}$, where $\langle \phi\rangle_{\cal L}=\{\psi\,|\,\phi \leq_{\cal L}\psi\}$.

Note, $p$ is $\cal L$-finitely generated iff it is $\cal L$-implied by some formula \emph{it contains}. No such assumption is made if $q$ is partial. The `generator' is only required to be in $p$, not in $q$. This will be of importance below where we will encounter finitely generated quantifier-free types, i.e., the case where $q=\qf_M(\br a)$. According to the definition just made, such $q$ is said to be $\cal L$-finitely generated when there is a pp formula true of $\br a$---quantifier-free or not!---that  $\cal L$-implies $q$. Certainly $q$ is so generated when $p$ is. But we will see cases where $q$ is finitely generated, while $p$ is not.

Consider types $p=\pp_M(\br a)$ and $q=\pp_N(\br b)$ with  $p\leq_{\cal L} q$. If $p$ is $\cal L$-implied by a single formula $\phi$, so is $q$. But $\phi$ may not be in $q$ and thus  $q$ may not be $\cal L$-finitely generated in the sense of the definition above. 

However, if the types in question are  $\cal L$-equivalent, $p\sim_{\cal L} q$, \emph{and}  $\cal L=\langle\cal L\rangle$,  then, by compactness,  every formula in $p$ is  $\cal L$-implied by some formula in $q$ (see Rem.\,\ref{Limpltypes} above). Hence, if $p$ is $\cal L$--implied by  $\phi\in p$, this formula must be  $\cal L$-implied by a formula $\psi\in q$, which then $\cal L$-implies all of $q$, making $q$  in turn $\cal L$-finitely generated as desired. Let us summarize.

\begin{rem}
 Let $p=\pp_M(\br a)$ and $q=\pp_N(\br b)$ be $\langle\cal L\rangle$-equivalent. If $p$ is $\cal L$-finitely generated, then so is $q$.
\end{rem}

I close this section with the simple fact  (known at least  since \cite[Rem.1.6]{habil}) that to check $\cal L$-finite generation of types in a finitely generated module it suffices to consider a generating tuple (cf.\ Lemma \ref{Fpurefg} for a similar phenomenon).

\begin{lem}\label{Latfg}
If the pp type of some generating tuple of a finitely generated module is $\cal L$-finitely generated, then the pp type of every other tuple in that module is $\cal L$-finitely generated as well.
\end{lem}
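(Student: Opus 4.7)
The plan is to exploit the fact that a generating tuple $\bar a$ of $M$ controls every other tuple via a linear (matrix) expression, and to transport the generator of $\pp_M(\bar a)$ along this expression.

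Concretely, suppose $p=\pp_M(\bar a)$ is $\cal L$-finitely generated by some $\phi(\bar x)\in p$. Let $\bar b$ be any other tuple in $M$. Since $\bar a$ generates $M$, I can write $\bar b=\mtx C\bar a$ for some matrix $\mtx C$ over $R$. The candidate generator for $q:=\pp_M(\bar b)$ is the pp formula obtained by pushing $\phi$ through $\mtx C$:
\[
\tilde\phi(\bar y)\;:=\;\exists \bar x\bigl(\phi(\bar x)\wedge \bar y\doteq \mtx C\bar x\bigr).
\]
First I would check that $\tilde\phi\in q$: the tuple $\bar a$ realizes $\phi$ in $M$ and satisfies $\bar b=\mtx C\bar a$, so it witnesses the existential quantifier. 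Hence $\tilde\phi(\bar b)$ holds in $M$.

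Next, I need $\tilde\phi\leq_{\cal L}\chi$ for every $\chi\in q$. The key observation is the substitution correspondence: a pp formula $\chi(\bar y)$ lies in $q=\pp_M(\bar b)$ if and only if the pp formula $\chi(\mtx C\bar x)$ (viewed in $\bar x$) lies in $p=\pp_M(\bar a)$. So for any $\chi\in q$, I have $\chi(\mtx C\bar x)\in p$, whence by hypothesis $\phi(\bar x)\leq_{\cal L}\chi(\mtx C\bar x)$. Now pick any $L\in\cal L$ and $\bar b'\in \tilde\phi(L)$; unpacking the existential gives some $\bar a'\in L$ with $\phi(\bar a')$ holding in $L$ and $\bar b'=\mtx C\bar a'$. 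The $\cal L$-implication just noted yields $\chi(\mtx C\bar a')=\chi(\bar b')$ in $L$, as desired.

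No serious obstacle arises: the argument is just the standard substitution trick for pp formulas combined with the observation that $\tilde\phi$ is genuinely pp (the equation $\bar y\doteq\mtx C\bar x$ is quantifier-free pp, and pp formulas are closed under conjunction and existential quantification). The only thing to be a little careful about is that the generator of $q$ must be in $q$ itself (this is the subtlety the author emphasized just before the lemma), and the witness $\bar a$ for the existential is precisely what ensures $\tilde\phi(\bar b)$ holds in $M$.
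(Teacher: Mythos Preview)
Your proof is correct and essentially identical to the paper's own argument: both write $\bar b=\mtx C\bar a$, take $\tilde\phi(\bar y)=\exists\bar x(\phi(\bar x)\wedge \bar y\doteq\mtx C\bar x)$ as the candidate generator, and reduce $\tilde\phi\leq_{\cal L}\chi$ to $\phi\leq_{\cal L}\chi(\mtx C\bar x)$ via the substitution correspondence. You have in fact spelled out the final implication (which the paper leaves as ``straightforward to verify'') in slightly more detail.
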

\begin{proof}
 Suppose  $A$  is generated by $\br a$  and $\pp_A(\br a)$ is $\cal L$-generated by $\phi\in\pp_A(\br a)$. Any other tuple $\br b$ in $A$ can be written as $\mtx B\br a$ for some matrix $\mtx B$ over the ring. Let $\phi_{\mtx B}$ be the pp formula $\exists\br x(\br z=\mtx B\br x \wedge \phi(\br x))$ (in the free variables $\br z$). Clearly, this formula is in $\pp_A(\br b)$, and I claim it $\cal L$-generates this type. Indeed, for any other $\psi=\psi(\br z)\in\pp_A(\br b)$, the formula  $\psi(\mtx B\br x)$  is in  $\pp_A(\br a)$, hence $\cal L$-implied by $\phi$.  It is, finally, straightforward to verify that $\phi\leq_{\cal L} \psi(\mtx B\br x)$ entails  $\phi_{\mtx B}\leq_{\cal L} \psi$.
\end{proof}

\section{Mittag-Leffler modules}\label{KMLSect}
Here we state the main theorem of \cite{habil}---and draw some more or less immediate consequences---about relative Mittag-Leffler modules. It is the basis for everything in this and other work of mine on Mittag-Leffler modules, yet it has, though widely circulated, never been published other than in the thesis \cite[Thm.2.2]{habil}.
(It appeared in the classical case (with no relativization) as the main theorem of \cite{ML}, cf.\,\cite[Thm.1.3.22]{P2}.)
 
\begin{definition}\label{MLdef}\cite[Ch.\,2]{habil}
\begin{enumerate}[\upshape(1)]
 \item Given a family ${\cal K}$ of right $R$-modules, a \emph{${\cal
K}$-Mittag-Leffler module} 
is
a left $R$-module $M$ such that the canonical map
\[\tau^I: (\prod_I N_i) \otimes M\to\prod_I (N_i\otimes M)\]
is monic for all subfamilies $\{N_i : i\in I\}$ of ${\cal K}$. 
\item A  right $R$-module $N$ is called a \emph{test module} for ${\cal
K}$-Mittag-Leffler if, in the previous definition, it suffices to let all the $N_i$ be equal to $N$. We then write $\tau^I_N$ for the transformation $N^I\otimes - \to (N\otimes -)^I$.
\item Given a test module $N$ as before, a \emph{test transformation}  for ${\cal
K}$-Mittag-Leffler is a transformation $\tau^I_N$ as before for a particular set $I$ that suffices to test the ${\cal
K}$-Mittag-Leffler condition above.
\item We omit `$\cal K$-' if ${\cal K}=$ Mod-$R$ and write `$N$-' if
${\cal K}=\{N\}$\,. 
\end{enumerate}
\end{definition}

The two cases of $\cal K$ that led to this definition in \cite{habil} were $\RMod$,   the case of (classical) Mittag-Leffler modules as
introduced in
\cite{RG},  and $\{R_R\}$, which was investigated in \cite{Goo}.

\begin{rem}\label{product}
  Clearly, a direct sum of modules is $\cal K$-Mittag Leffler if and only if every summand is.
Since the canonical map
$(\prod_I\prod_J N_{ij}) \otimes M\to\prod_I\prod_J
(N_{ij}\otimes M)$
factors through the canonical map
$(\prod_I\prod_J N_{ij}) \otimes M\to\prod_I(\prod_J
N_{ij}\otimes M)$, we see that ${\cal K}$-Mittag-Leffler implies ${\cal K}'$-Mittag-Leffler
whenever ${\cal K}'$ consists of products of members of $\cal K$.
\end{rem}

\subsection{Atomic modules}\label{at}
The main theorem below describes $\cal K$-Mittag-Leffler modules in terms of the following concept, which is a variant of a classical notion from first-order model theory.

\begin{definition}\label{D-at}
Suppose $\cal L$ is  a family of left $R$-modules.  An \emph{$\cal L$-atomic}\footnote{Be aware of different usage of this term in \cite[\S3.3]{habil}.}
 module is a left $R$-module $M$ such that the pp type of every tuple in $M$ is $\cal L$-finitely generated.
\end{definition}

In case $\cal L$ is the class of all $R$-modules (or all finitely presented ones), this is what was called \emph{positively atomic} in \cite{habil} and \cite{ML}.

The definition can be reformulated in terms of linear algebra as follows. For every tuple $\bar  a$ in $M$ there is a tuple $\bar b$ in $M$ and a finite system of linear equations, $\alpha(\bar x, \bar y)$ or $\mtx A\bar x \doteq \mtx B\bar y$, with solution $(\bar a, \bar b)$ 
and such that
\begin{displaytext}\nonumber
 if  $\mtx C\bar x \doteq \mtx D \bar z$ is a finite system of linear equations whose 
  inhomogeneous instance  $\mtx C\bar a \doteq \mtx D \bar z$ has a solution in $M$, then for every $L\in\cal L$ and every matching tuple $\bar c$ in $L$, if the  inhomogeneous system 
$\mtx A\bar c \doteq \mtx B\bar y$ has a solution in $L$, so does the inhomogeneous system  $\mtx C\bar c \doteq \mtx D \bar z$.
\end{displaytext}


 \begin{rem}\label{LLatfg} Lemma \ref{Latfg} above says that 
a finitely generated module is $\cal L$-atomic if and only if the pp type of some \emph{generating} tuple is $\cal L$-finitely generated.

Clearly, $\cal L$-atomicity   depends only on $\leq_{\cal L}$, so if this relation is the same as $\leq_{\cal L'}$, then ${\cal L}$-atomic is the same as ${\cal L'}$-atomic. 
(This shows that the concept can be extended to theories, as was done in \cite{habil}).
\end{rem}

\subsection{The main theorem}

\begin{thm}\label{MThm}{\rm \cite[Thm.2.2]{habil}}
Let ${\cal K}$ and ${\cal L}$  be definably dual classes  of right and left  $R$-modules (i.e., such that 
$\phi\leq_\cal L\psi$ iff $\D\psi\leq_\cal K\D\phi$, for all left pp formulas $\phi$ and $\psi$). 

 Then a  module is ${\cal K}$-Mittag-Leffler if and only if it is ${\cal L}$-atomic.
\end{thm}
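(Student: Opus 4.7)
The plan is to reduce both conditions to statements about the pp type $p=\pp_M(\bar m)$ of an arbitrary finite tuple $\bar m\in M^k$, using Herzog's Criterion (Fact \ref{HC}) as the bridge between tensor equalities and pp formulas. Recall that an element of $(\prod_I N_i)\otimes M$ can always be written as $\bar n\otimes\bar m$ for some $\bar m\in M^k$ and $\bar n\in(\prod_I N_i)^k$, that $\pi_i(\bar n)\otimes\bar m=0$ for all $i$ is the kernel condition of $\tau^I$, and that $\bar n\otimes\bar m=0$ in $(\prod_I N_i)\otimes M$ is equivalent, via Herzog, to the existence of a single left pp formula $\phi$ with $\bar m\in\phi(M)$ and $\bar n\in\D\phi(\prod_I N_i)$, the latter meaning $\pi_i(\bar n)\in\D\phi(N_i)$ for every $i$. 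Thus the $\cal K$-Mittag-Leffler condition, restricted to the fixed tuple $\bar m$, reads: for every family $\{N_i\}_{i\in I}\subseteq\cal K$ and every tuple $\bar n\in(\prod_I N_i)^k$ such that for each $i$ there is $\psi_i\in p$ with $\pi_i(\bar n)\in\D\psi_i(N_i)$, there must exist a \emph{single} $\phi\in p$ with $\pi_i(\bar n)\in\D\phi(N_i)$ for every $i$. The duality $\phi\leq_{\cal L}\psi\iff\D\psi\leq_{\cal K}\D\phi$ then translates this condition into a purely formula-theoretic statement about $p$.

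For the ($\Leftarrow$) direction, suppose $M$ is $\cal L$-atomic and let $\phi\in p$ be an $\cal L$-generator. Given a family as above with $\pi_i(\bar n)\otimes\bar m=0$ for every $i$, Herzog yields formulas $\psi_i\in p$ with $\pi_i(\bar n)\in\D\psi_i(N_i)$. Since $\phi\leq_{\cal L}\psi_i$, duality gives $\D\psi_i\leq_{\cal K}\D\phi$, so $\D\psi_i(N_i)\subseteq\D\phi(N_i)$ and $\pi_i(\bar n)\in\D\phi(N_i)$ for every $i$. Hence $\bar n\in\D\phi(\prod_I N_i)$ and, by Herzog again, $\bar n\otimes\bar m=0$, proving $\tau^I$ monic.

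For the ($\Rightarrow$) direction, suppose $M$ is $\cal K$-Mittag-Leffler but $p$ is \emph{not} $\cal L$-finitely generated; I derive a contradiction. Non-generation means that for every $\phi\in p$ there is $\psi\in p$ with $\phi\not\leq_{\cal L}\psi$, equivalently (by duality) a module $N_\phi\in\cal K$ and a $k$-tuple $\bar n_\phi\in\D\psi_\phi(N_\phi)\setminus\D\phi(N_\phi)$ for some $\psi_\phi\in p$. Index the family by the set $I:=p$ itself (a set, not a class, since pp formulas form a set), put $\bar n:=(\bar n_\phi)_{\phi\in I}\in(\prod_{\phi\in I}N_\phi)^k$, and note that for every $\phi\in I$ we have $\pi_\phi(\bar n)=\bar n_\phi\in\D\psi_\phi(N_\phi)$ with $\psi_\phi\in p$, so Herzog delivers $\pi_\phi(\bar n)\otimes\bar m=0$. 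Thus $\tau^I(\bar n\otimes\bar m)=0$, and the $\cal K$-Mittag-Leffler hypothesis forces $\bar n\otimes\bar m=0$. By Herzog there is a single $\phi_0\in p$ with $\bar n\in\D\phi_0(\prod_{\phi\in I}N_\phi)$; in particular $\bar n_{\phi_0}\in\D\phi_0(N_{\phi_0})$, contradicting the choice of $\bar n_{\phi_0}$. The main technical obstacle is bookkeeping: getting the arities to match and, above all, being careful that the index set used in the ($\Rightarrow$) direction is genuinely a set (which is why one indexes over $p$ rather than over pairs $(N,\bar n)$ ranging over the proper class $\cal K$); beyond that, everything is a clean application of Herzog's Criterion combined with the definable duality $\phi\leq_{\cal L}\psi\iff\D\psi\leq_{\cal K}\D\phi$.
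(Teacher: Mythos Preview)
Your proof is correct and is essentially the standard argument via Herzog's Criterion that the paper references (but does not reproduce in full, merely indicating how to relativize the classical proof from \cite{ML} and \cite[Thm.1.3.22]{P2}). Both directions are handled cleanly; the indexing of the witnessing family by $I=p$ in the $(\Rightarrow)$ direction is exactly the right bookkeeping device, and the use of duality to pass between $\leq_{\cal L}$ on the left and $\leq_{\cal K}$ on the right is precisely the ``main adjustment'' the paper alludes to.
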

\begin{proof}
 The case $\cal K=\RMod$ is  contained in \cite[Thm.2.2]{ML} and in \cite[Thm.1.3.22]{P2}. The proof of the general case is almost verbatim the same (and carried out in detail in \cite{habil}). The main adjustment to be made is to replace every unrelativized occurrence of an implication $\leq$ between left pp formulas by the relativized implication $\leq_{\cal L}$ and every implication between right pp formulas by the relativized implication $\leq_{\cal K}$. 
\end{proof}

Because of space concerns I state the following more or less  immediate consequences  without proof. 

\begin{cor}\label{Cor}\cite[Cor.2.4]{habil}
\begin{enumerate}[\upshape(1)]
\item  A module $M$ is ${\cal K}$-Mittag-Leffler iff every finite subset
of $M$ is contained in a pure submodule of $M$ which is ${\cal K}$-Mittag-Leffler. In particular, the class of  $\cal K$-Mittag-Leffler modules is closed under taking  unions of pure chains.
\item \label{closed}The class of $\cal K$-Mittag-Leffler modules is closed under pure
submodules and pure extensions.
\item A direct sum is $\cal K$-Mittag-Leffler iff all summands are $\cal K$-Mittag-Leffler.
\item A finitely generated module is finitely presented iff it is Mittag-Leffler.
\item If $N$ is a finitely generated pure submodule of a ${\cal K}$-Mittag-Leffler module $M$
(hence $N$ is finitely presented), then $M/N$ is ${\cal K}$-Mittag-Leffler too.
\end{enumerate}
\end{cor}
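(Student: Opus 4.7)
My plan is to translate each item into $\cal L$-atomicity via Theorem \ref{MThm} and lean on the fundamental fact that pp types are invariant under pure embeddings: if $N$ is pure in $M$ and $\bar a \in N$, then $\pp_N(\bar a) = \pp_M(\bar a)$, so one side is $\cal L$-finitely generated iff the other is. Three of the five items reduce essentially to this invariance; the remaining two require additional homological input.

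For (1), one direction is trivial ($N = M$). For the converse, any tuple $\bar a$ lies in a pure $\cal K$-Mittag-Leffler submodule $N$, and $\pp_M(\bar a) = \pp_N(\bar a)$ is $\cal L$-finitely generated by $\cal L$-atomicity of $N$, so $M$ is $\cal L$-atomic and hence $\cal K$-Mittag-Leffler. Pure chains follow at once, since every finite subset of the union lies in some member by directedness. For (2), closure under pure submodules is immediate from the same invariance. Closure under pure extensions (the middle term of a pure exact $0 \to A \to B \to C \to 0$ with $A, C$ both $\cal K$-Mittag-Leffler) is handled by the tensor definition: tensoring with $\prod N_i$ yields a short exact top row by pure-exactness, and the bottom $0 \to \prod(N_i \otimes A) \to \prod(N_i \otimes B) \to \prod(N_i \otimes C) \to 0$ is exact since $\prod$ is exact in $\Ab$, so the three maps $\tau_A, \tau_B, \tau_C$ form a commutative ladder to which the short five lemma applies to give $\ker \tau_B = 0$ from $\ker \tau_A = \ker \tau_C = 0$. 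For (3), one direction follows from (2) since summands are pure; for the other, the finite support of any tuple in $\bigoplus M_i$ reduces it to a finite sub-sum, where pp types decompose as intersections of componentwise pp types and an $\cal L$-generator is obtained as the pp-lattice sum of the component generators.

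For (4), take $\cal L = \RMod$. For a finitely generated module $M$ with generating tuple $\bar a$, being finitely presented is equivalent to the relations on $\bar a$ being finitely generated, equivalently to $\pp_M(\bar a)$ being classically generated by a single quantifier-free pp formula. By Lemma \ref{Latfg} this lifts to all tuples, making $M$ $\RMod$-atomic, which by Theorem \ref{MThm} is equivalent to being Mittag-Leffler.

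For (5), I use the tensor definition and a diagram chase. By (2), $N$ is $\cal K$-Mittag-Leffler, so $\tau^I_N$ is monic for families in $\cal K$. Lenzing's theorem says that $N$ being finitely generated forces $\tau^I_N$ to be surjective for every family, hence iso for families in $\cal K$ (the specialization $\cal K = \ModR$ upgrades this to iso for all families, yielding the parenthetical that $N$ is finitely presented). The pure exact sequence $0 \to N \to M \to M/N \to 0$ yields a $3 \times 3$ diagram with exact rows, and the snake lemma combined with $\ker \tau_M = 0$ and $\tau_N$ an iso forces $\ker \tau_{M/N} \hookrightarrow \mathrm{coker}\, \tau_N = 0$. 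Hence $\tau_{M/N}$ is monic for every family in $\cal K$, so $M/N$ is $\cal K$-Mittag-Leffler. I expect part (5) to be the main obstacle, because the atomicity-theoretic translation of pp formulas in $M/N$ back to formulas on $(\bar c, \bar n)$ in $M$ is derailed by $\cal L$ not being closed under quotients; the tensor route with Lenzing's surjectivity is the cleaner path.
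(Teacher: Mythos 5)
The paper states this corollary \emph{without proof} (``Because of space concerns\dots''), intending it as a more or less immediate consequence of Theorem \ref{MThm}, so there is no official argument to compare against line by line; your write-up is a sound reconstruction. Items (1)--(3) follow exactly the intended route: invariance of pp types under pure embeddings plus $\cal L$-atomicity, and your observation that in a finite direct sum the pp type of a tuple is the intersection of the componentwise types, with the lattice join of the component generators as an $\cal L$-generator, is correct. Your homological treatment of pure extensions in (2) and of item (5) is a genuinely different route from the atomicity calculus, but it works: purity makes both rows of the ladder short exact, products are exact in $\Ab$, and the four/snake lemma does the rest. For (5) in particular, Lenzing's surjectivity of $\tau^I_N$ for finitely generated $N$, combined with injectivity from (2), makes $\tau_N$ an isomorphism on families from $\cal K$, and the snake lemma then kills $\ker\tau_{M/N}$; this is indeed cleaner than translating pp formulas of $M/N$ back to $M$. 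You are also right to confine the parenthetical ``hence $N$ is finitely presented'' to the classical case $\cal K=\ModR$: for general $\cal K$ it can fail (Example \ref{notfp} exhibits a cyclic, non-finitely-presented $\sharp$-Mittag-Leffler module, which is a pure submodule of itself), so your argument, which only needs $\tau^I_N$ epi and monic on families from $\cal K$, is the correct reading.

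The one step you should make explicit is in (4), in the direction ``Mittag-Leffler $\Rightarrow$ finitely presented.'' Atomicity hands you a generator $\phi=\exists\bar y\,\alpha(\bar x,\bar y)$ of $\pp_M(\bar a)$ that need not be quantifier-free, whereas your equivalence with finite presentability is stated for a \emph{quantifier-free} generator. The bridge: pick a witness $\bar b$ for $\bar y$ in $M$, write $\bar b=\mtx B\bar a$ using that $\bar a$ generates $M$, and replace $\phi$ by the quantifier-free formula $\alpha(\bar x,\mtx B\bar x)$, which lies in $\pp_M(\bar a)$, implies $\phi$, and hence also generates the type; its conjuncts then generate the relation module of $\bar a$. (This is precisely the argument of the lemma preceding Remark \ref{specialfp}.) With that sentence inserted, the proof is complete.
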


\subsection{Definable subcategories}\label{consequences}
As was stressed in its original form---albeit in different terminology---one of the most powerful direct consequences of this theorem is that the concept of $\cal K$-Mittag-Leffler module is invariant under passing to the generated definable subcategory. Many a preservation result for classes $\cal K$ found in the literature follow directly from this. We make it explicit in (1) below. The other statements are special cases contained in one way or another in \cite[Ch.\,2]{habil}, for (3), see \cite[Rem.2.3(b)]{habil}.

\begin{cor}\label{sameside}
 Let ${\cal K}$ and ${\cal K}'$  be  families of right $R$-modules.  
 
Then a  module is ${\cal K}$-Mittag-Leffler if and only if it is ${\cal K}'$-Mittag-Leffler, provided any of the following holds.

\begin{enumerate}[\upshape(a)]
\item ${\cal K}$ and ${\cal K}'$ generate the same definable subcategory.
 \item $\Add({\cal K})=\Add(\cal K')$  (or $\add({\cal K})=\add(\cal K')$).
 \item ${\cal K}$ and ${\cal K}'$ have the same associated Ziegler-closed set, i.e., $$\Zg_R\cap \langle \cal K\rangle =\Zg_R\cap \langle \cal K'\rangle.$$
 \item ${\cal K}'$ consists of all direct products of members of $\cal K$.
 \item Every member of $\cal K$ is elementarily equivalent to a member of $\cal K'$, and vice versa.\qed
\end{enumerate}
\end{cor}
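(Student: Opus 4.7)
My plan is to funnel all five hypotheses (a)--(e) into a single statement---that $\cal K$ and $\cal K'$ generate the same definable subcategory of $\ModR$---and then invoke Theorem \ref{MThm}. Concretely, if $\langle\cal K\rangle=\langle\cal K'\rangle$, then fixing definable duals $\cal L$ of $\cal K$ and $\cal L'$ of $\cal K'$ (for instance their character dual classes), elementary duality gives $\langle\cal L\rangle=\langle\cal L'\rangle$, so the preorders $\leq_{\cal L}$ and $\leq_{\cal L'}$ on left pp formulas coincide. By Remark \ref{LLatfg}, $\cal L$-atomicity and $\cal L'$-atomicity are then the same property of left $R$-modules, and Theorem \ref{MThm} translates this into $\cal K$-Mittag-Leffler $=$ $\cal K'$-Mittag-Leffler.

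It then remains to check that each of (a)--(e) forces $\langle\cal K\rangle=\langle\cal K'\rangle$. Part (a) is the hypothesis verbatim. For (b), I would invoke the remark near the end of Section \ref{defcat} that whenever $\cal K\subseteq\cal K'\subseteq\Add(\cal K)$ the preorders $\leq_{\cal K}$ and $\leq_{\cal K'}$ coincide; under (b) both $\cal K$ and $\cal K'$ sit inside their common $\Add$, so both generate the same definable subcategory as $\Add(\cal K)=\Add(\cal K')$. The $\add$ version is identical. For (d), definable subcategories are closed under direct products (this is part of their defining closure properties), whence $\cal K\subseteq\cal K'\subseteq\langle\cal K\rangle$ and the argument concludes as before. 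For (e), each pp implication $\forall\bar x(\phi(\bar x)\rightarrow\psi(\bar x))$ is a first-order sentence and is therefore preserved under elementary equivalence; the mutual elementary-equivalence assumption then forces $\leq_{\cal K}$ and $\leq_{\cal K'}$ to agree, so the generated definable subcategories agree as well.

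The one place that requires machinery from outside the present paper is (c): I would cite Ziegler's theorem (see \cite{P2}), which says that the assignment $\cal D\mapsto\cal D\cap\Zg_R$ is a bijection between the definable subcategories of $\ModR$ and the closed subsets of the Ziegler spectrum. Under (c) the two definable subcategories $\langle\cal K\rangle$ and $\langle\cal K'\rangle$ share the same image, hence are equal. This is the main---indeed the only---obstacle in the proof; once it is granted, the entire corollary is a direct book-keeping exercise around Theorem \ref{MThm} and the principle that $\cal K$-Mittag-Leffler depends on $\cal K$ only through the definable subcategory it generates.
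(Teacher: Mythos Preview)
Your proposal is correct and follows essentially the same approach as the paper, which does not give an explicit proof (the corollary ends with a \qed) but indicates in the preceding paragraph that (a) is the direct consequence of Theorem \ref{MThm} and (b)--(e) are ``special cases'' reducing to it. You have simply spelled out the reductions that the paper leaves implicit, including the appeal to Ziegler's bijection for (c), and your chain through definable duals and Remark \ref{LLatfg} is exactly the intended mechanism.
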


The converses do not hold. For example, over a left noetherian ring, every left module is $\flat$-Mittag-Leffler, cf.\ Cor.\,\ref{noeth}(2) below. Hence the latter is the same as $\{0\}$-Mittag-Leffler, while  $\langle\flat\rangle$ and $\{0\}$ are two different definable subcategories.

Since the definable subcategory generated by $\Rmod$ is $\RMod$, as a special case of (1) we get that $\Rmod$-Mittag-Leffler modules are Mittag-Leffler  \cite[Lemma 3.1]{Fac}.

\subsection{Direct limits}\label{imm}
Consider  an upward directed poset  $I=(I, \leq)$ and  an  $I$-directed system $\cal M = \{ M_i\}_{I}$ of modules with connecting maps $f_i^j: M_i\to M_j$ for all $i\leq j$ in $I$ (and $f_j^k f_i^j = f_i^k$ for $i\leq j\leq k$). Let  $M$ be its direct limit with canonical maps $f_i: M_i \to M$ (such that $f_j f_i^j = f_i$ for $i\leq j$). 

 A \emph{tail} in $\cal M$ is a set of the form $\{f_i^j(\bar{m}_i) | i\leq j\in I\}$, where $i\in I$ and $\bar{m}_i$ is a tuple in $M_i$. Clearly, the canonical maps $f_j$ send each entry $\bar{m}_j=f_i^j(\bar{m}_i)$ in this tail to the same tuple,  $\bar{m}$ say, in $M$. All entries in this tail being preimages of $\bar{m}$ under the corresponding canonical maps, we also call this tail a \emph{tail of $\bar{m}$}. Clearly, any two, and hence any finitely many tails of a given tuple merge at some $j$th entry. So, as far as eventual behavior in tails is concerned, it makes no difference with  which tail we start. Let's make this explicit.
 
\begin{rem}\label{limittruth}
 A tuple $\bar{m}$ in the limit $M$ satisfies a pp formula $\phi$ if and only if a(ny)  tail  of $\bar{m}$ eventually does.
\end{rem}
 
 Suppose now the $M_i$ are  $\cal K$-Mittag-Leffler, hence $\cal L$-atomic. We want to know what makes the limit $\cal K$-Mittag-Leffler, i.e., $\cal L$-atomic, too. For this, as we will see,
 it suffices to consider countable chains in the direct system at hand. 
 
By atomicity,   the pp type of the $j$th entry $\bar{m}_j$ in a tail $\{\bar{m}_j | j\geq i\}$ is $\cal L$-generated by some pp formula $\phi_j$. For every  $k\geq j$, the $k$th entry is an image of the $j$th and thus satisfies $\phi_j$. Therefore these formulas form a descending chain, $\phi_i \geq_{\cal L} \phi_j \geq_{\cal L} \phi_k$ for all $i\leq j \leq k$ in $I$. And they are all contained in the pp type of the limit $\bar{m}=f_i(\bar{m}_i)$.
 
 We say a tail as above $\cal L$-\emph{stabilizes at $\phi_j$} if $\phi_k \sim_{\cal L}  \phi_j$ for all $k\geq j$.
 

\begin{lem}{\rm(Essentially \cite[Lemma 3.6]{PR})}\label{tails} Given  a limit tuple $\bar{m}$ and a pp formula $\phi$, the following are equivalent.
 \begin{enumerate}  [\upshape (i)]
 \item $\pp_M(\bar{m})$ is $\cal L$-generated by  $\phi$.
 \item A(ny) tail $\{f_i^j(\bar{m}_i) | i\leq j\in I\}$ of $\bar{m}$ $\cal L$-stabilizes at some $\phi_j$ (in which case $\phi_j\sim_{\cal L}  \phi$  and hence $\phi_k \sim_{\cal L}  \phi$ for \emph{all} $k\geq j$).
 \end{enumerate}
\end{lem}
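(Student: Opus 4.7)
The plan is to leverage Remark \ref{limittruth} together with the $\cal L$-atomicity of each $M_j$ to transfer $\cal L$-generators back and forth between the components $M_j$ and the direct limit $M$. First I would use the set-up already developed in the paragraphs preceding the lemma: by atomicity of $M_j$, pick for each $j$ in a tail a pp formula $\phi_j\in\pp_{M_j}(\bar m_j)$ that $\cal L$-generates this type. Since connecting maps are module homomorphisms and hence preserve pp formulas, $\phi_j\in\pp_{M_k}(\bar m_k)$ for every $k\geq j$, so that the $\cal L$-generator $\phi_k$ of $\pp_{M_k}(\bar m_k)$ satisfies $\phi_k\leq_{\cal L}\phi_j$; this yields the stated descending chain in $\leq_{\cal L}$. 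Moreover each $\phi_j$ belongs to $\pp_M(\bar m)$ by Remark \ref{limittruth}. Note also that any two tails of $\bar m$ merge beyond some index in $I$, so "any tail" is unambiguous.

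For (i)$\Rightarrow$(ii), assume $\pp_M(\bar m)$ is $\cal L$-generated by $\phi$. Then $\phi\in\pp_M(\bar m)$, so Remark \ref{limittruth} yields some $j$ in the tail such that $\phi\in\pp_{M_k}(\bar m_k)$ for every $k\geq j$. Using that $\phi_k$ $\cal L$-generates $\pp_{M_k}(\bar m_k)$, we obtain $\phi_k\leq_{\cal L}\phi$. Conversely, $\phi_k\in\pp_M(\bar m)$ combined with the assumption that $\phi$ is an $\cal L$-generator of $\pp_M(\bar m)$ gives $\phi\leq_{\cal L}\phi_k$. Hence $\phi_k\sim_{\cal L}\phi$ for all $k\geq j$, which is precisely stabilization at $\phi_j$ together with the parenthetical assertion.

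For (ii)$\Rightarrow$(i), suppose the tail $\cal L$-stabilizes at $\phi_j$. Then $\phi_j\in\pp_M(\bar m)$. Given any $\psi\in\pp_M(\bar m)$, Remark \ref{limittruth} yields some $k\geq j$ with $\psi\in\pp_{M_k}(\bar m_k)$, whence $\phi_k\leq_{\cal L}\psi$. Stabilization gives $\phi_j\sim_{\cal L}\phi_k$, so $\phi_j\leq_{\cal L}\psi$. Thus $\phi_j$ $\cal L$-generates $\pp_M(\bar m)$; if $\phi$ is any other such $\cal L$-generator (as in (i)), then $\phi\sim_{\cal L}\phi_j$, and the chain of $\cal L$-equivalences $\phi_k\sim_{\cal L}\phi_j\sim_{\cal L}\phi$ for $k\geq j$ follows.

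There is no genuine obstacle; the proof is essentially bookkeeping, and the only point that warrants care is the usual inversion "types grow with satisfaction, generators shrink in the $\leq_{\cal L}$-order". The nontrivial inputs are entirely external: $\cal L$-atomicity of each $M_j$ (to produce the $\phi_j$) and Remark \ref{limittruth} (to pass satisfaction between the tail and the limit).
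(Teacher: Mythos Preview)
Your proof is correct and follows essentially the same route as the paper's: both directions use Remark~\ref{limittruth} to pass pp formulas between the limit and the tail, and the $\cal L$-atomicity of each $M_j$ to compare with the generator $\phi_j$. The only cosmetic difference is that in (ii)$\Rightarrow$(i) the paper stops once $\phi_j$ is shown to $\cal L$-generate $\pp_M(\bar m)$, whereas you go on to spell out the $\cal L$-equivalence with the given $\phi$; this is harmless bookkeeping.
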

\begin{proof} Let $\{\bar{m}_j | j\geq i\}$ be a tail of the tuple $\bar{m}$. 

 (i) $\Rightarrow$ (ii): 
For $\bar{m}$ to satisfy  $\phi$ in $M$, the tail must eventually satisfy it in the respective $M_j$'s, Rem.\,\ref{limittruth}. As $\phi_k$ ${\cal L}$-generates $\pp_{M_k}(\bar{m}_k)$, this implies $\phi_k\leq_{\cal L} \phi$ (from some $j$ on). But all these formulas are contained in $\pp_M(\bar{m})$ and are therefore, conversely,   ${\cal L}$-implied by $\phi$, so the tail stabilizes at $\phi_j$  and $\phi_k \sim_{\cal L}  \phi$ from $j$ on.
 
(ii) $\Rightarrow$ (i). Suppose the tail stabilizes at $\phi_j$. To verify that $\pp_M(\bar{m})$ is $\cal L$-generated by $\phi_j$, let $\psi\in\pp_M(\bar{m})$. Then the tail eventually satisfies  $\psi$. In particular, there is $M_k$ with
$k\geq j$ where $\bar{m}_k$ satisfies $\psi$.
Then $\phi_k\leq_{\cal L}  \psi$ and so $\phi_j\sim_{\cal L}  \phi_k$ $\cal L$-implies $\psi$ as well. 
 \end{proof}

%
\begin{definition}\label{wlimit}\textbf{}
\begin{enumerate}[\upshape(1)]
\item By an \emph{$\omega$-limit} we mean a direct limit of a chain of order type $\omega$.
 \item
 By an \emph{$\omega$-sublimit} of the system $\cal M=\{ M_i, f_i^j \}_{i\leq j\in I}$  we mean an $\omega$-limit of the form
 $M_{i_0}\stackrel{f_{i_0}^{i_1}}\to M_{i_1} \stackrel{f_{i_1}^{i_2}}\to M_{i_2} \stackrel{f_{i_2}^{i_3}}\to \ldots \stackrel{f_{i_{k-1}}^{i_k}}\to M_{i_k} \stackrel{f_{i_k}^{i_{k+1}}}\to\ldots$ ($k<\omega$),  where $i_0< i_1< i_2< \ldots < i_k < \ldots $  ($k<\omega$) in $I$. 
 \item Call the directed system $\cal M$ (or its limit) \emph{$\omega$-closed} if every $\omega$-sublimit (as a structure) is itself a member of $\cal M$.
 \end{enumerate}
\end{definition}

\begin{prop} {\rm \cite[Prop.2.2]{HT}}
A direct limit of   $\cal K$-Mittag-Leffler modules is  $\cal K$-Mittag-Leffler provided all $\omega$-sublimits are. Thus, an $\omega$-closed direct limit of   $\cal K$-Mittag-Leffler modules is  $\cal K$-Mittag-Leffler.
\end{prop}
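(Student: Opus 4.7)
The plan is to work through the equivalence of $\cal K$-Mittag-Leffler with $\cal L$-atomicity (Theorem \ref{MThm}) and argue contrapositively via Lemma \ref{tails}, using an explicit $\omega$-sublimit extracted from a non-stabilizing tail.

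\smallskip

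First I fix notation: let $M = \varinjlim_{i\in I} M_i$ with connecting maps $f_i^j$ and canonical maps $f_i$, and assume each $M_i$ is $\cal K$-Mittag-Leffler, equivalently $\cal L$-atomic. For each $M_j$ and each tuple $\bar m_j$ in $M_j$, fix an $\cal L$-generator $\phi_j\in\pp_{M_j}(\bar m_j)$ of that type. Now suppose for contradiction that $M$ is not $\cal L$-atomic, and pick a tuple $\bar m$ in $M$ whose pp type is not $\cal L$-finitely generated. Fix any tail $\{\bar m_j\mid j\geq i\}$ of $\bar m$. By Lemma \ref{tails}, since $\pp_M(\bar m)$ is not $\cal L$-generated by any pp formula (in particular not by any of the $\phi_j$), the tail does not $\cal L$-stabilize anywhere.

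\smallskip

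The crucial step is to recast ``does not $\cal L$-stabilize'' as an extractable descent. The formulas $\phi_j$ along the tail form an $\cal L$-descending family: for $j\leq k$ one has $\phi_k\leq_{\cal L}\phi_j$ because $\bar m_k = f_j^k(\bar m_j)$ forces $\phi_j\in\pp_{M_k}(\bar m_k)$, and hence $\phi_k\leq_{\cal L}\phi_j$. Non-stabilization means that for every $j\geq i$ there exists $k\geq j$ with $\phi_k<_{\cal L}\phi_j$ (strict). Recursively pick $i=i_0<i_1<i_2<\dots$ in $I$ with $\phi_{i_{n+1}}<_{\cal L}\phi_{i_n}$ for every $n<\omega$.

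\smallskip

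Now invoke the hypothesis on $\omega$-sublimits. The chain $M_{i_0}\to M_{i_1}\to M_{i_2}\to\dots$ has an $\omega$-sublimit $M'$, which by assumption is $\cal K$-Mittag-Leffler, and so, by Theorem \ref{MThm}, $\cal L$-atomic. Let $\bar m'$ be the image of $\bar m_{i_0}$ under the canonical map $M_{i_0}\to M'$; its tail in the sublimit is exactly $\{\bar m_{i_n}\mid n<\omega\}$, with the same fixed $\cal L$-generators $\phi_{i_n}$. Applying Lemma \ref{tails} \emph{to the sublimit $M'$}, since $\pp_{M'}(\bar m')$ is $\cal L$-finitely generated, this tail must $\cal L$-stabilize at some $\phi_{i_n}$, meaning $\phi_{i_{n+1}}\sim_{\cal L}\phi_{i_n}$ --- contradicting the strict descent arranged above. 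Hence $M$ is $\cal L$-atomic and the second sentence of the proposition follows immediately, since in an $\omega$-closed system every $\omega$-sublimit is itself a member of the system.

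\smallskip

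I expect the only mild subtlety to be a bookkeeping one: verifying that the $\cal L$-generator $\phi_{i_n}\in\pp_{M_{i_n}}(\bar m_{i_n})$ fixed at the outset does double duty --- it serves as the generator both in the original system and in the $\omega$-sublimit --- so that Lemma \ref{tails} can be applied twice with compatible data. Once this is spelled out, the argument is just the descending-chain contradiction sketched above.
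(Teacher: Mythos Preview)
Your argument is correct and follows essentially the same route as the paper's proof: both argue contrapositively, extract a strictly $\cal L$-descending $\omega$-subchain from a non-stabilizing tail, and then apply Lemma~\ref{tails} to the resulting $\omega$-sublimit to reach a contradiction. The only cosmetic difference is that the paper concludes directly that the sublimit fails to be $\cal K$-Mittag-Leffler, whereas you phrase the same step as ``the sublimit is $\cal K$-Mittag-Leffler, so the tail would have to stabilize''; and your bookkeeping worry about the $\phi_{i_n}$ doing double duty is harmless, since $\pp_{M_{i_n}}(\bar m_{i_n})$ and its $\cal L$-generator are intrinsic to $M_{i_n}$ and do not depend on the ambient directed system.
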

\begin{proof}
 In the notation introduced in the beginning of this subsection, assume the limit $M$ is \emph{not}  $\cal K$-Mittag-Leffler. By Thm.\ \ref{MThm}, there's a tuple  $\bar{m}$ in $M$ whose pp type is not 
$\cal L$-finitely generated. Then, by the previous lemma, some (hence every) tail $\{f_i^j(\bar{m}_i) | i\leq j\in I\}$ of $\bar{m}$ does not stabilize. Hence there's a sequence  $i\leq i_0< i_1< i_2< \ldots < i_k < \ldots $  ($k<\omega$) in $I$ for which the pp formulas corresponding to this subtail form a strictly descending chain:   $\phi_{i_0}>_{\cal L} \phi_ {i_1}>_{\cal L} \phi_{i_2}>_{\cal L}  \ldots >_{\cal L} \phi_ {i_k} >_{\cal L} \ldots $  ($k<\omega$) in $I$. 

Consider the sublimit $N$ of the $M_{i_k}$ (an $\omega$-limit) with canonical maps $g_k: M_{i_k}\to N$ ($k<\omega$) and the tuple $\bar{n}=g_k(f_i^{i_k}(\bar{m}_{i}))$ therein. Its tail $\{f_i^{i_k}(\bar{m}_i) | k<\omega\}$ does not stabilize and so its pp type is, by the previous lemma,  not $\cal L$-finitely generated and $N$, by Thm.\ \ref{MThm},  not $\cal K$-Mittag-Leffler, as desired. 
\end{proof}

We show in Section \ref {ctblechains} below what it takes to make an $\omega$-limit  $\cal K$-Mittag-Leffler.

\begin{cor}\label{closedunion} The union of an  $\omega$-closed direct system of
 of  $\cal K$-Mittag-Leffler submodules is  $\cal K$-Mittag-Leffler.\qed
\end{cor}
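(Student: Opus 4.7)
The corollary should fall out of the preceding proposition almost for free once one identifies the union of a directed system of submodules with its direct limit. My plan is the following.

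First, I would observe that a directed system $\cal C = \{C_i\}_{i\in I}$ of submodules of a fixed module, with the inclusions as connecting maps, has its directed colimit canonically isomorphic to the set-theoretic union $\bigcup\cal C$ (this is the standard fact that filtered colimits in $\RMod$ are computed as quotients of the coproduct, and for inclusions these quotients collapse to the union). So the hypothesis places us within the setup of the preceding proposition, with the role of the limit $M$ played by $\bigcup\cal C$.

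Next, I would unpack what an $\omega$-sublimit looks like in this special setting. Picking a strictly ascending sequence $i_0 < i_1 < i_2 < \ldots$ in $I$, the $\omega$-sublimit of the chain $C_{i_0}\subseteq C_{i_1}\subseteq C_{i_2}\subseteq \ldots$ (with inclusions as transition maps) is again just the submodule $\bigcup_{k<\omega} C_{i_k}$. The $\omega$-closedness assumption (Definition \ref{wlimit}(3)) says this submodule is a member of $\cal C$, hence, by hypothesis, is itself $\cal K$-Mittag-Leffler. Therefore \emph{every} $\omega$-sublimit of the system is $\cal K$-Mittag-Leffler.

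Finally, I would invoke the preceding proposition: since the system consists of $\cal K$-Mittag-Leffler modules and all of its $\omega$-sublimits are $\cal K$-Mittag-Leffler, the direct limit, i.e.\ the union $\bigcup\cal C$, is $\cal K$-Mittag-Leffler. There is no real obstacle here; the only point that deserves a line of justification is the identification of $\omega$-sublimits with unions of the corresponding ascending $\omega$-chains of submodules, and this is purely formal. The content of the result is entirely packed into the proposition, whose proof in turn rests on Theorem \ref{MThm} and Lemma \ref{tails}.
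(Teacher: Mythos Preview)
Your argument is correct and is exactly what the paper intends: the corollary is marked with \qed because it is an immediate specialization of the preceding proposition (in particular its second sentence) to a directed system of submodules with inclusion maps, where the direct limit is the union and $\omega$-sublimits are unions of $\omega$-chains. You have simply spelled out this identification, which is all that is needed.
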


\subsection{Density}
In the spirit of \cite{EM}, Herbera and Trlifaj \cite[Def.2.5]{HT} call a collection, $\cal C$,  of submodules of a module $M$ a \emph{$\kappa$-dense system} (in $M$) provided that

\begin{enumerate}[\upshape(a)]
 \item every subset of $M$ of power less than $\kappa$ is contained in some $C\in\cal C$; 
 \item $\cal C$ is  closed under chains of length less than $\kappa$;
 \item $\cal C$ is directed, i.e., every two members of $\cal C$ are contained in a member of $\cal C$.
 \end{enumerate}
 
The case of interest here is $\kappa=\aleph_1$ (but the general definition reveals the role of "$<$" more visibly). 


 \begin{cor} {\rm \cite[Thm.2.6]{HT}}\label{denserel}
 A module is  $\cal K$-Mittag-Leffler if and only if it possesses  an $\aleph_1$-dense system of  $\cal K$-Mittag-Leffler submodules. Furthermore, in that case these submodules can always be chosen to be countably generated. 
\end{cor}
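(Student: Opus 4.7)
The plan is to split the equivalence into two directions, using the $\omega$-closed union result for one and deferring the countable-generation strengthening to the later separation result.

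For the backward direction, suppose $M$ possesses an $\aleph_1$-dense system $\cal C$ of $\cal K$-Mittag-Leffler submodules. Applying condition (a) of the density definition to singletons yields $M=\bigcup \cal C$, and condition (b) says $\cal C$ is closed under countable chains, in particular under $\omega$-chains, so $\cal C$ is an $\omega$-closed directed system in the sense of Def.\,\ref{wlimit}(3). Since all members are $\cal K$-Mittag-Leffler, Cor.\,\ref{closedunion} immediately gives that $M=\bigcup \cal C$ is $\cal K$-Mittag-Leffler.

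For the forward direction, if $M$ is $\cal K$-Mittag-Leffler, then the singleton $\{M\}$ is trivially an $\aleph_1$-dense system of $\cal K$-Mittag-Leffler submodules, settling the main ``iff''. To obtain the furthermore clause---that the members of $\cal C$ can be chosen countably generated---I would invoke Prop.\,\ref{ctblesep}, the relativized countable-separation result to be proved in Section \ref{separation}: every countable subset of a $\cal K$-Mittag-Leffler module sits inside a countably generated $F$-pure $\cal K$-Mittag-Leffler submodule. Take $\cal C$ to be the family of all such countably generated $F$-pure $\cal K$-Mittag-Leffler submodules of $M$. Density condition (a) is then exactly Prop.\,\ref{ctblesep}; directedness (c) follows by applying Prop.\,\ref{ctblesep} to the (still countable) union of generating sets of any two members; and closure under countable chains (b) reduces to verifying that the union of a countable chain of countably generated $F$-pure $\cal K$-Mittag-Leffler submodules is again of this form. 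Countable generation is automatic, $F$-purity passes to directed unions as in the classical pure case, and the $\cal K$-Mittag-Leffler property is preserved under $F$-pure chains by the relativized Cor.\,\ref{Cor}(1).

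The main obstacle is the furthermore clause: without Prop.\,\ref{ctblesep} there is no way to force the members of the dense system to be countably generated, and both the backward direction and the bare forward direction are essentially formal consequences of material already in hand. The role of $F$-purity, to be introduced in Section \ref{Fpure}, is precisely what makes closure under chains in the forward direction go through, which is why the candidate system $\cal C$ is taken to consist of $F$-pure submodules rather than arbitrary $\cal K$-Mittag-Leffler ones.
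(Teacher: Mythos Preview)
Your proposal is correct and follows essentially the same route as the paper: the backward direction via Cor.\,\ref{closedunion} applied to the $\omega$-closed union $\bigcup\cal C$, and the forward direction (including the countable-generation clause) by taking $\cal C$ to be the countably generated $\cal L$-pure submodules and deferring to Prop.\,\ref{ctblesep}. Two minor points: what you call ``$F$-pure'' is the paper's $\cal L$-pure (Def.\,\ref{FpureHerzog}); and for condition (b) it is cleaner to observe that the union of a countable chain of $\cal L$-pure submodules of $M$ is again $\cal L$-pure in $M$, hence automatically $\cal K$-Mittag-Leffler by Cor.\,\ref{FpureDefCor}, rather than invoking Cor.\,\ref{Cor}(1), which is stated for ordinary pure chains.
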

\begin{proof}
A module with an $\aleph_1$-dense system of  $\cal K$-Mittag-Leffler submodules is the union of that system, by (a). This union is  $\omega$-closed by (b), and so the module is  $\cal K$-Mittag-Leffler by the previous corollary.

 That, conversely,  every  $\cal K$-Mittag-Leffler  module has such an $\aleph_1$-dense system 
will be immediate from Prop.\ \ref{ctblesep} below, which shows that one can take for $\cal C$ the set of all countably generated $\cal L$-pure submodules.
\end{proof}

\subsection{Test modules and test maps}\label{test} It follows from the Main Theorem that a test module  for $\cal K$-Mittag-Leffler in the sense of Definition \ref{MLdef} above is a module that generates the same definable subcategory as does $\cal K$. Another way of saying this is  (1) of the following corollary. The rest are immediate consequences of that---for (4) invoke 
the L\"owenheim-Skolem Theorem.

\begin{cor}\cite[Cor.2.6]{habil}\label{testmod}
 Test modules for $\cal K$-Mittag-Leffler are:
\begin{enumerate}[\upshape(1)]
 \item Any right module $N$ such that $\leq_{\cal K}$ and $\leq_N$ are the same.
 \item $N$ the direct sum (or product)  of all pure-injective indecomposables in the Ziegler-closed set corresponding to  $\cal K$.
 \item $N$ the direct sum (or product) of one model of every completion of the first-order (or the implicational) theory of $\cal K$. 
 \item Any module elementarily equivalent to a test module; so there are test modules of power $\card$.\qed
\end{enumerate}
\end{cor}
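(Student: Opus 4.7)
The plan is to reduce every item to Corollary \ref{sameside}(a) via the Main Theorem. By Definition \ref{MLdef}(2), $N$ is a test module for $\cal K$-Mittag-Leffler precisely when $N$-Mittag-Leffler coincides with $\cal K$-Mittag-Leffler; combining Theorem \ref{MThm} with Corollary \ref{sameside}(a), this is equivalent to $\{N\}$ and $\cal K$ generating the same definable subcategory, i.e., to the preorders $\leq_N$ and $\leq_{\cal K}$ agreeing on left pp formulas. Thus (1) is just this reformulation, and (2)--(4) reduce to verifying this agreement in each case.

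For (2), I would invoke the standard description of definable subcategories via the Ziegler spectrum: the pure-injective indecomposables in the Ziegler-closed set associated to $\cal K$ generate $\langle\cal K\rangle$, and since pp formulas commute with arbitrary direct products and direct sums (hence pp implications are preserved by both), the direct sum or direct product of these indecomposables still generates the same definable subcategory, whence (1) applies. For (3), let $T$ be either the first-order theory or the pp-implicational theory of $\cal K$ and let $N$ be the direct sum (or product) of one chosen model of each completion of $T$. Every pp implication in $T$ holds in $N$ by the preservation argument just mentioned; conversely, if $\phi\not\leq_{\cal K}\psi$, some $K\in\cal K$ refutes $\forall\bar x(\phi\to\psi)$, and the chosen representative of the completion extending $\Th(K)$ is elementarily equivalent to $K$ and therefore also refutes it, so $\phi\not\leq_N\psi$. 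Hence $\leq_N$ and $\leq_{\cal K}$ agree, and (1) applies.

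Item (4) is then immediate from (1): if $N$ is a test module and $N'\equiv N$, pp implications being first-order sentences, $\leq_N$ and $\leq_{N'}$ coincide, so $N'$ is a test module too. The cardinality claim follows by applying the downward L\"owenheim--Skolem theorem in the $R$-module language (which has $\card$ non-logical symbols) to any test module produced by (2) or (3). The one mildly delicate point I anticipate is in (3), where one must confirm that the conclusion is insensitive to whether ``theory'' means the full first-order theory or only its pp-implicational fragment, and that ``completion'' is well-defined in the latter reading; but the argument via preservation of pp implications under sums and products together with elementary equivalence of the chosen representatives handles both interpretations uniformly.
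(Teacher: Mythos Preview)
Your argument is correct and follows the paper's intended route: the text immediately preceding the corollary says exactly that (1) is the Main Theorem rephrased and that (2)--(4) are immediate consequences, with L\"owenheim--Skolem for (4).

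One overstatement to fix: you write that $N$ being a test module ``is equivalent to $\{N\}$ and $\cal K$ generating the same definable subcategory.'' Only the implication $\langle N\rangle=\langle\cal K\rangle\Rightarrow N$ is a test module is true in general; the paper itself points out right after Corollary~\ref{sameside} that the converse fails (over a left noetherian ring every module is $\flat$-Mittag-Leffler, so $\flat$-ML and $\{0\}$-ML coincide while $\langle\flat_R\rangle\neq\langle 0\rangle$). This does not damage your proof, since the corollary only asserts sufficient conditions and you verify $\leq_N=\leq_{\cal K}$ in each case, but the word ``equivalent'' should be weakened to ``implied by.''
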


In the remainder of this subsection I discuss, without detailed proofs, a bound on the \emph{exponent} $I$ occurring in a test transformation $\tau_N^I$, see Definition \ref{MLdef}. To minimize this exponent, we need some
more notation.
Given a lattice $\Lambda$\,, let $\triangle(\Lambda)$
\label{delta}
be the smallest cardinal $\kappa$ such that every ideal  is generated by
$\kappa$ elements. Note that $\triangle(\Lambda)$   is finite if
and only if it is 1 if and only if all ideals  in $\Lambda$ are
principal iff $\Lambda$ has no infinite (strictly) ascending chains, in
which case $\Lambda$ is called a \emph{noetherian lattice}, cf.
\cite[Ch.III, ex.10]{S}. Since finitely generated  left ideals are pp definable in
$R_R$\,, we see that  $\triangle(R_R)=1$ iff $R$ is left noetherian.
For a module $N$\,, let  $\triangle(N)$
denote the
supremum of the cardinals $\triangle(\Lambda^n(N))$, where, recall, $\Lambda^n(N)$ is the lattice of $n$-place pp subgroups of $N$ and $n$ runs over the natural numbers.
Since there are no more than $\card$ pp formulas altogether, $\triangle(N)$ is always bounded by this number. 

 
\begin{fact}\label{testfact}
 {\rm \cite[Cor.2.6]{habil}.} 
\begin{enumerate}[\upshape(1)]
 \item Given a test module $N$ for  $\cal K$-Mittag-Leffler, $\tau^{\triangle(N)+|N|}_N$ is a test transformation.
 \item If this test module $N$ has power $\card$, then $\tau^N_N$ is a test transformation.
\end{enumerate}
\end{fact}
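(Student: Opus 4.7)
The plan is to combine the Main Theorem (\ref{MThm}) with Herzog's Criterion (Fact \ref{HC}) to translate monicity of $\tau^I_N$ into a type-theoretic condition, and then bound the witnessing index set $I$ using two separate cardinality considerations corresponding to $\triangle(N)$ and $|N|$.

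First I would observe that, by Herzog's Criterion and the commutation of pp formulas with direct products, $\tau^I_N$ is monic iff for every $s$, every $\bar{m}\in M^s$ and every family $(\bar{a}_i)_{i\in I}\in (N^s)^I$ such that each tensor $\bar{a}_i\otimes\bar{m}=0$ in $N\otimes M$ (equivalently, $\bar{a}_i\in\D\phi_i(N)$ for some $\phi_i\in p=\pp_M(\bar{m})$), there is a \emph{single} $\phi\in p$ with $\bar{a}_i\in\D\phi(N)$ for all $i\in I$. Since $N$ is a test module, $\leq_N=\leq_{\cal K}$, and Theorem \ref{MThm} identifies the full ``for every $I$'' version of this condition with $\cal L$-atomicity of $M$. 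The ``if'' direction is immediate for any $I$: an $\cal L$-generator $\phi\in p$ satisfies $\D\phi_i(N)\subseteq\D\phi(N)$ by duality, so each $\bar{a}_i\in\D\phi(N)$.

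For the ``only if'' direction I would argue contrapositively: if $M$ is not $\cal K$-Mittag-Leffler, fix $\bar{m}\in M^s$ with $p$ not $\cal L$-finitely generated. Under the anti-isomorphism $\D$, the filter $p$ corresponds to an ideal $\D p$ in $\Lambda^s(N)$, which, by non-$\cal L$-f.g.\ of $p$, is non-principal but, by definition of $\triangle(N)$, is generated by at most $\triangle(N)$ elements. For each $\phi\in p$ there exists $\psi\in p$ with $\phi\not\leq_{\cal L}\psi$, equivalently $\D\psi(N)\not\subseteq\D\phi(N)$, supplying a witness $\bar{a}\in\D\psi(N)\setminus\D\phi(N)$. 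Collecting such witnesses produces a family $(\bar{a}_i)$ certifying that $\tau^{I_0}_N$ is not monic. Two reductions then bound $|I_0|$: identifying indices yielding the same tuple in $N^s$ leaves at most $|N^s|\leq|N|$ distinct witnesses (for infinite $N$; for finite $N$, $\Lambda^s(N)$ is finite, so $\triangle(N)=1$, every pp type is $\cal L$-f.g., and $M$ is automatically $\cal K$-Mittag-Leffler), and the lattice-theoretic reduction retains only one witness per ideal generator of $\D p$, costing at most $\triangle(N)$. Padding shorter families by zero tuples, which satisfy the kernel condition trivially, allows a fixed index set $I_0$ of cardinality $\triangle(N)+|N|$ to serve every $\bar m$ simultaneously.

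Part (2) follows immediately from (1): if $|N|=\card$, then since there are at most $\card$ pp formulas altogether, $\triangle(N)\leq\card=|N|$, whence $\triangle(N)+|N|=|N|$ and $\tau^N_N$ is a test transformation. The main obstacle will be the clean parametrization of the witness family uniformly across all tuples $\bar{m}$ of varying length $s$ and across all non-$\cal L$-f.g.\ pp types, together with verifying that the two bounds combine additively rather than requiring a larger cardinal.
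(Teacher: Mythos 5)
The paper does not actually prove this Fact---it is stated ``without detailed proofs'' and referred back to the unpublished \cite[Cor.2.6]{habil}---so there is no in-paper argument to compare yours against; I can only assess your proposal on its own terms. Its main line is sound: the translation of monicity of $\tau^I_N$ via Herzog's Criterion together with $\D\phi(N^I)=\D\phi(N)^I$, the use of $\leq_N\;=\;\leq_{\cal K}$ for a test module to turn failure of $\cal L$-finite generation of $p=\pp_M(\bar m)$ into non-principality of the ideal generated by $\{\D\psi(N):\psi\in p\}$ in $\Lambda^s(N)$, and the key observation that whether a family $(\bar a_i)_{i\in I}$ witnesses non-monicity depends only on the \emph{set} of tuples occurring in it. That last point alone caps the required exponent at $|N^s|=|N|$ when $N$ is infinite, and your treatment of the finite case (then $\triangle(N)=1$, every pp type is $\cal L$-finitely generated because a finite directed set has a maximum, and every module is $\cal K$-Mittag-Leffler) is correct; padding by zero tuples, which lie in every $\D\phi(N)$, then gives a single index set. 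Part (2) follows exactly as you say from $\triangle(N)\leq\card$.

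The one step I would not let stand as written is the ``lattice-theoretic reduction'' retaining ``one witness per ideal generator of $\D p$, costing at most $\triangle(N)$.'' If $g$ is a generator of that ideal and $\phi\in p$, then non-principality only guarantees $g\not\subseteq\D\phi(N)$ for \emph{some} $g$; your single pre-chosen element $\bar a_g\in g$ may nevertheless happen to lie in $\D\phi(N)$, so the reduced family need not witness non-monicity. To certify $g\not\leq\D\phi(N)$ uniformly in $\phi$ you must keep all of $g$ (or at least a group-generating subset, since $\D\phi(N)$ is a subgroup), which costs up to $|N|$ per generator, giving $\triangle(N)\cdot|N|=\triangle(N)+|N|$ for infinite cardinals---presumably the bookkeeping behind the author's exponent. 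Fortunately this flawed reduction is redundant: the de-duplication bound $|N|$ already implies the stated (non-optimal) bound $\triangle(N)+|N|$, so your proof goes through once that step is either deleted or restated as ``all elements of each of at most $\triangle(N)$ ideal generators.''
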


It was noted in 
\cite[Rem.2.3(c),(d)]{habil} that  in case  $\triangle(N)=1$,  the entire exponent in $\tau^{\triangle(N)+|N|}_N$
can be taken to be $1$, and  \emph{every} left module $M$ is
$N$-Mittag-Leffler. 
In particular, over a left noetherian ring $R$ all left modules
are $R$-Mittag-Leffler. This goes back to \cite[p.1]{Goo}; see Cor.\,\ref{noeth}(2) below for the converse.

\section{Special cases}\label{specialcases}
Here we specialize some of the previous results to the following three cases: $\cal K=\ModR$ and   $\cal L=\RMod$,  $\cal K=\flat_R$ and   $\cal L= {_R\sharp}$,  and  $\cal K=\sharp_R$ and   $\cal L= {_R\flat}$ (notation as introduced after CONVENTION \ref{conv}).

\subsection{(Classical) Mittag-Leffler modules, i.e., $\cal K=\ModR$}  Corollary \ref{sameside} yields at once:

\begin{cor}{\rm \cite[Rem.2.3]{habil}}
Given a left $R$-module $M$, the following are equivalent.

\begin{enumerate}[\upshape(i)]
\item $M$ is Mittag-Leffler.
\item $M$ is  ${\cal K}$-Mittag-Leffler and $\langle {\cal K}\rangle=\ModR$. 
\item $M$ is   $\Zg_R$-Mittag-Leffler. 
\item $M$ is  ($\modR$)-Mittag-Leffler.
 \end{enumerate}
\end{cor}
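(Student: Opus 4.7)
The plan is to reduce all four conditions to a single application of Corollary \ref{sameside}(a), by showing that each of the classes $\ModR$, $\cal K$ (as in (ii)), $\Zg_R$, and $\modR$ generates the same definable subcategory of right $R$-modules, namely $\ModR$ itself. Once this is established, the equivalence of (i)--(iv) is immediate: by Cor.\,\ref{sameside}(a), $\cal K$-Mittag-Leffler and $\cal K'$-Mittag-Leffler coincide whenever $\langle\cal K\rangle=\langle\cal K'\rangle$.

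First I would dispatch the trivial steps. Condition (i) is by definition $\ModR$-Mittag-Leffler (Definition \ref{MLdef}(4)). Condition (ii) explicitly postulates $\langle\cal K\rangle=\ModR$, so (i)$\Leftrightarrow$(ii) is Cor.\,\ref{sameside}(a) applied with ${\cal K}'=\ModR$. For (iv), note that every right $R$-module is a direct limit of finitely presented ones; since definable subcategories are closed under direct limits, $\modR\subseteq\langle\modR\rangle$ forces $\langle\modR\rangle=\ModR$. Hence (i)$\Leftrightarrow$(iv) is again Cor.\,\ref{sameside}(a).

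The one step that deserves slightly more care is (i)$\Leftrightarrow$(iii). Here I would invoke Ziegler's theorem that the points of $\Zg_R$ form a "topological basis" for definable subcategories in the sense that every definable subcategory of $\ModR$ is determined by (indeed, generated as a definable subcategory by) its intersection with $\Zg_R$; equivalently, $\leq_{\Zg_R}$ coincides with $\leq_{\ModR}$ on pp formulas (see the paragraph preceding Cor.\,\ref{sameside}, where this is recorded explicitly). Consequently $\langle\Zg_R\rangle=\ModR$, and the equivalence follows from Cor.\,\ref{sameside}(a) (or, equivalently, from part (c) with ${\cal K}'=\ModR$).

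There is no real obstacle here beyond correctly citing the fact that $\Zg_R$ generates the full category $\ModR$ as a definable subcategory; all three nontrivial equivalences collapse to the single observation that the four classes listed are in the same $\langle\,\cdot\,\rangle$-class. Accordingly the proof is essentially a bookkeeping exercise around Cor.\,\ref{sameside}, and does not require revisiting Thm.\,\ref{MThm} directly.
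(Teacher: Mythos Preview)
Your proposal is correct and matches the paper's approach exactly: the paper gives no proof beyond the line ``Corollary \ref{sameside} yields at once,'' and you have simply spelled out what that entails, namely that $\ModR$, any $\cal K$ with $\langle\cal K\rangle=\ModR$, $\Zg_R$, and $\modR$ all generate the same definable subcategory (the last two facts being recorded in Section~\ref{defcat}). There is nothing to add.
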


\begin{cor}
Besides the test modules one obtains by specializing Cor.\,\ref{testmod} to the case $\cal K=\ModR$, another test module is $P_*$, the direct sum of one representative of each isomorphism type from $\modR$. (This module has power at most $\card$.)
\end{cor}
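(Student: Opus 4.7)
The plan is to verify that $P_*$ satisfies the criterion of Cor.\,\ref{testmod}(1) in the case $\cal K=\ModR$, namely that $\leq_{P_*}$ and $\leq_{\ModR}$ coincide on right pp formulas, equivalently that $\{P_*\}$ generates the definable subcategory $\ModR$.

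By the preceding corollary (together with the remark in Section \ref{defcat} that $\leq_{\ModR}$ and $\leq_{\modR}$ are the same), it suffices to show that $\{P_*\}$ and $\modR$ generate the same definable subcategory of right $R$-modules. I would achieve this via Rem.\,2.1(2), by establishing $\Add(\{P_*\})=\Add(\modR)$. One inclusion is trivial, because $P_*$ is, by construction, itself a direct sum of members of $\modR$, so $\{P_*\}\subseteq\Add(\modR)$ and hence $\Add(\{P_*\})\subseteq\Add(\modR)$. For the reverse inclusion, note that by construction each finitely presented right $R$-module appears as a direct summand of $P_*$, so $\modR\subseteq\Add(\{P_*\})$, whence $\Add(\modR)\subseteq\Add(\{P_*\})$. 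With the Add-classes matching, Rem.\,2.1(2) yields $\leq_{P_*}\,=\,\leq_{\modR}\,=\,\leq_{\ModR}$, which by Cor.\,\ref{testmod}(1) identifies $P_*$ as a test module.

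For the cardinality bound, I would argue that every finitely presented right $R$-module has the form $R^n/K$ with $K$ the image of some map $R^k\to R^n$, which is specified by a $k\times n$ matrix over $R$; the set of all such presentation data has size at most $\sum_{k,n<\omega}|R|^{kn}\leq \card$. Each such module then has cardinality at most $\card$, so the direct sum $P_*$ has cardinality at most $\card\cdot\card=\card$.

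The proof is essentially a bookkeeping exercise rather than having a hard step: all the real work was done by Theorem \ref{MThm} and Cor.\,\ref{sameside}. The only point that requires a moment of care is making sure that the equivalence is drawn through the $\Add$-class rather than attempting to compare the definable subcategories directly, since individually checking the pp-preorder on $P_*$ would be cumbersome.
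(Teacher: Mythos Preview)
Your proof is correct and matches the paper's (implicit) reasoning: the corollary is stated without proof in the paper, and the intended justification is precisely that $P_*$ and $\modR$ generate the same definable subcategory (each lies in the other's $\Add$-class), so that $\leq_{P_*}=\leq_{\modR}=\leq_{\ModR}$ and Cor.~\ref{testmod}(1) applies. Your cardinality estimate is also the standard one.
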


\subsection{$\flat$-Mittag-Leffler modules}\label{flat-ML} The first concrete instance of relativized Mittag-Leffler module in the literature seems to have been considered in  \cite{Goo}  (not under this name) with $\cal K = {\flat_R}$. By a result of Zimmermann  \cite{Zim}, $\leq_{\flat_R}$ and $\leq_{R_R}$ are the same (i.e., ${\flat_R}$ and ${R_R}$ generate the same definable subcategory), see also \cite[Thm.2.3.9]{P2}. Thus $\flat$-Mittag-Leffler is the same as $R_R$-Mittag-Leffler, or $R$-Mittag-Leffler for short. The following basic properties of these modules are immediate from the main theorem. All but condition (v) (from \cite[Cor.2.7]{habil}) are contained in \cite{Goo}.

\begin{cor}\label{Goo}\label{noeth}\textbf{}
\begin{enumerate}[\upshape(1)]
\item For any left $R$-module $M$ the following are
equivalent.
\begin{enumerate}  [\upshape (i)]
\item $M$ is $\flat$-Mittag-Leffler.
\item $M$ is $R_R$-Mittag-Leffler, i.e., $R_R$ is a test module.
\item $R^R \otimes M \to M^R$ is monic
(i.\,e. $R^R \otimes - \to (R\otimes -)^R$ \quad is a test
transformation for $\flat$-Mittag-Leffler).
\item  For every
finitely generated  submodule $A$ of $M$, the inclusion $A\subseteq M$ factors through a finitely presented module.
\item All quantifier-free types realized in $M$ are finitely generated.
\end{enumerate}

\item
\cite[p.1]{Goo} Every left $R$-module is
$R$-Mittag-Leffler iff $R$ is left noetherian.
\end{enumerate}
\end{cor}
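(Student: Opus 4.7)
My plan is to establish the equivalences in (1) by threading through the main theorem, the test transformation from Fact \ref{testfact}, and the free-realization correspondence, then to settle (2) by applying (iv) to cyclic quotients.

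The easy entries come first. Equivalence (i) $\Leftrightarrow$ (ii) is Corollary \ref{sameside}(a) applied to Zimmermann's theorem \cite{Zim}, which says $\leq_{\flat_R}$ and $\leq_{R_R}$ coincide and hence $\flat_R$ and $\{R_R\}$ generate the same definable subcategory. Then (ii) $\Leftrightarrow$ (iii) is Fact \ref{testfact} applied to the test module $R_R$: identifying $R \otimes M \cong M$, the test transformation $\tau^R_{R_R}$ is exactly the map in (iii). For (iv) $\Leftrightarrow$ (v) I would use the free-realization correspondence between finitely presented pointed modules and pp formulas. Given a factoring $A = \langle \bar a \rangle \hookrightarrow F \to M$ with $F$ finitely presented and $\bar a \mapsto \bar b \mapsto \bar a$, Lemma \ref{Latfg} produces a pp formula $\phi$ generating $\pp_F(\bar b)$; this $\phi$ belongs to $\pp_M(\bar a)$ via the hom $F \to M$ and generates $\qf_M(\bar a)$, since any relation of $\bar a$ in $M$ is already a relation of $\bar b$ in $F$ along $A \hookrightarrow F$. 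Conversely, the canonical free realization $(F, \bar b)$ of a $\phi \in \pp_M(\bar a)$ with $\qf_M(\bar a) \subseteq \langle \phi \rangle$ yields the desired factoring, with $A \to F$ injective precisely because $\qf_M(\bar a) \subseteq \qf_F(\bar b)$ by the generation hypothesis.

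To link these to (i), I would invoke Theorem \ref{MThm} to recast (i) as $M$ being ${_R\sharp}$-atomic. One direction is easy: if $M$ is ${_R\sharp}$-atomic with $\pp_M(\bar a)$ generated modulo ${_R\sharp}$ by $\phi$, then for each quantifier-free $\alpha \in \qf_M(\bar a)$ the ${_R\sharp}$-implication $\phi \Rightarrow \alpha$ actually forces $\phi \leq \alpha$ globally, as one sees by testing on the free realization $(F, \bar b)$ of $\phi$ embedded in its injective envelope $E(F)$: the qf formula $\alpha$ is preserved in both directions along $F \hookrightarrow E(F)$, and $E(F)$ is absolutely pure. So (v) follows. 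For the converse I would use a pushout: given $\phi \in \pp_M(\bar a)$ generating $\qf_M(\bar a)$, an arbitrary $\psi \in \pp_M(\bar a)$, an absolutely pure $L$, and $\bar c \in \phi(L)$, form the pushout $P$ of the composition $A \hookrightarrow F \to L$ (with $\bar a \mapsto \bar b \mapsto \bar c$, using the free realization of $\phi$ and the injection $A \hookrightarrow F$ established above) and the inclusion $A \hookrightarrow M$. Since $A \hookrightarrow M$ is injective, so is $L \to P$, which absolute purity of $L$ upgrades to a pure embedding. The images of $\bar c$ and $\bar a$ in $P$ coincide, and $\bar a \in \psi(M)$ forces this common image into $\psi(P)$; purity pulls it back to $\bar c \in \psi(L)$, yielding the required ${_R\sharp}$-implication from $\phi$ to $\psi$.

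For (2), the direction $R$ left noetherian $\Rightarrow$ every left module is $R$-Mittag-Leffler is recorded in the discussion following Fact \ref{testfact}. Conversely, given any left ideal $J$ of $R$, set $M = R/J$ and apply (iv) to $A = M$: the identity of $M$ factors through a finitely presented $F$, exhibiting $M$ as a retract, and hence a direct summand, of $F$. Summands of finitely presented modules are themselves finitely presented (a finite presentation of $F$ projects to one of the summand), so $M = R/J$ is finitely presented, $J$ is finitely generated, and $R$ is left noetherian. The main obstacle I expect is the pushout step of part (1): extracting injectivity of $L \to P$ (and hence its purity via absolute purity of $L$) from the injectivity of $A \hookrightarrow M$ is the only delicate point; everything else reduces to routine bookkeeping with pp formulas, free realizations, and Lemma \ref{Latfg}.
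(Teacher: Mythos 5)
Your proposal is correct, and for most of the equivalences it follows the same route as the paper: (i)$\Leftrightarrow$(ii) via Zimmermann's theorem and Corollary \ref{sameside}, (ii)$\Leftrightarrow$(iii) via Fact \ref{testfact} (the paper is slightly more careful here, splitting into the cases $R$ infinite, where $|R|=\triangle(R_R)+|R_R|$, and $R$ finite, where $\triangle(R_R)=1$ and the discussion after Fact \ref{testfact} applies; your appeal to $\tau^R_{R_R}$ glosses over this but is repairable in one line), (iv)$\Leftrightarrow$(v) via free realizations exactly as in the paper, and (2) by applying (iv) to the cyclic modules $R/J$, which is precisely what the paper means by ``the second part follows from (iv).''

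The one place where you genuinely diverge is the bridge (i)$\Leftrightarrow$(v), which the paper isolates as Lemma \ref{qfgen}. Your direction ``$\sharp$-generated pp type $\Rightarrow$ finitely generated qf type'' (via injective envelopes and the fact that quantifier-free formulas pass down to submodules) is the same as the paper's. But for the converse the paper argues through elementary duality: from $\bar a\in\phi(M)\subseteq\ann_M\D\phi(R_R)$ it extracts the annihilator conditions $\bar r\,\bar x\doteq 0$ into $\qf_M(\bar a)$ and then uses the identity $\phi(N)=\ann_N\D\phi(R_R)$ for absolutely pure $N$ (\cite[Prop.2.3.3]{P2}). You instead push out the inclusion $A\hookrightarrow M$ along the map $A\to F\to L$ built from the free realization of the qf-generator $\phi$, use that pushouts of monomorphisms are monomorphisms in $\RMod$, upgrade $L\to P$ to a pure embedding by absolute purity of $L$, and reflect $\psi$ back along it. This is sound (the injectivity of $L\to P$ you flag as delicate is standard: in the explicit pushout $(M\oplus L)/\{(a,-a)\}$ a kernel element of $L\to P$ comes from a kernel element of $A\to M$), and it buys a proof that avoids elementary duality and the annihilator description of pp subgroups of absolutely pure modules entirely, at the cost of being less quotable as a self-contained statement about types (the paper's Lemma \ref{qfgen} is reused later, e.g.\ in Remark \ref{qfsharpimpl} and Section \ref{countablygenerated}).
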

\begin{proof}
The second part follows from (iv) of the first.

Since $\le_{\flat_R}$ and $\le_{R_R}$ are the same, we see from the theorem that $R_R$ is a test module, hence
we have
(i) $\Leftrightarrow$ (ii). To prove
(ii) $\Leftrightarrow$ (iii) we are thus left with
checking that the exponent can be taken to be $R$\,. If $R$ is infinite,
$\card=|R|=\triangle(R_R)\cdot |R_R|$\,, hence we get the desired
exponent from this and the second part of the above corollary. If $R$ is
finite, $\triangle(R_R)=1$ and we get exponent $1$ from the discussion after Fact \ref{testfact}.

(iv) $\Rightarrow$ (v): Let $A=\langle\br a\rangle_M$ and $(A, \br a)\to (P, \br c)\to (M, \br a)$ the factorization given, with $P$ finitely presented. Then $\qf_M(\br a)=\qf_A(\br a)\subseteq\pp_P(\br c)\subseteq\pp_M(\br a)$, so the pp formula that generates $\pp_P(\br c)$ also generates $\qf_M(\br a)$.

(Of course this suffices for the proof, but it's interesting to note a direct proof of the converse: if $\phi\in\pp_M(\br a)$ generates $\qf_M(\br a)$, then any finitely presented free realization $(P, \br c)$ of $\phi$ maps to $(M, \br a)$; it remains to find $(A, \br a)\to (P, \br c)$, which is easily done by extending the map $\br a\to\br c$ by linearity; this is well defined, for if $\br r\br a=0$, then $\br r\br x\dot=0$ is in $\qf_A(\br a)$, hence implied by $\phi$, and so $\br r\br c=0$.)

(i) $\Leftrightarrow$ (v). By the theorem,
(i) is equivalent to every pp type in $M$ being $\D
(\flat_R)$-finitely generated. We will show that this is the same as every
quantifier-free type in $M$ being ($\RMod$-) finitely generated. First of all, by the duality of
flat and absolutely pure \cite{Her} (for another reference, if somewhat in disguise, see \cite[Prop.3.4.26]{P2}), $\le_{\D
(\flat_R)}$ is the same as $\le_{_R\sharp}$\,. So we have to show
that $\pp_M(\bar{a})$ is $\sharp$-finitely generated iff the type  $\qf_M(\bar{a})$ is
finitely generated. Because of its significance, we make this claim a separate lemma.
\end{proof}

\begin{lem}\label{qfgen} Given a tuple $\br a$ in a module $M$, the following are equivalent.
\begin{enumerate}  [\upshape (i)]
\item $\qf_M(\bar{a})$ is
finitely generated.
\item $\qf_M(\bar{a})$ is
 $\sharp$-finitely generated.
\item $\pp_M(\bar{a})$ is $\sharp$-finitely generated. 
\end{enumerate} 
Moreover, the same pp (not necessarily quantifier-free!)  formula  works in all three cases.
\end{lem}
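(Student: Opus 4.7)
I will close the cycle (i)$\Rightarrow$(iii)$\Rightarrow$(ii)$\Rightarrow$(i), checking along the way that the same pp formula $\phi$ witnesses each condition. Two directions are immediate: (iii)$\Rightarrow$(ii) because $\qf_M(\br a)\subseteq\pp_M(\br a)$, and (i)$\Rightarrow$(ii) because unrelativized implication is stronger than $\sharp$-implication.

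For (ii)$\Rightarrow$(i) the key observation is that quantifier-free pp formulas are absolute: their satisfaction transfers both ways across any embedding. Given any $\psi\in\qf_M(\br a)$, any module $N$, and $\br b\in\phi(N)$, I embed $N$ into its injective hull $E(N)$, which is injective and hence absolutely pure. Then $\br b\in\phi(E(N))$ by upward preservation of pp formulas, and $\br b\in\psi(E(N))$ by hypothesis (ii); since $\psi$ is just a finite system of linear equations, $\br b\in\psi(N)$ as well. Thus $\phi\leq\psi$ unrelativistically for every $\psi\in\qf_M(\br a)$.

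The main work is (i)$\Rightarrow$(iii). Write an arbitrary $\psi(\br x)=\exists\br y(\mtx A\br x+\mtx B\br y\doteq 0)$ in $\pp_M(\br a)$, and fix $L\in\sharp$ and $\br b\in\phi(L)$; I need a witness $\br b''\in L$ for $\mtx A\br b+\mtx B\br b''=0$. The crucial observation is that, for every row vector $\br r$ of appropriate length with $\br r\mtx B=0$, the quantifier-free formula $\br r\mtx A\br x\doteq 0$ lies in $\qf_M(\br a)$: a witness $\br a''$ for $\psi(\br a)$ in $M$ satisfies $\mtx A\br a+\mtx B\br a''=0$, and left-multiplying by $\br r$ kills the $\mtx B$-summand to leave $\br r\mtx A\br a=0$ in $M$. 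Since $\phi$ generates $\qf_M(\br a)$ unrelativistically, $\phi\leq(\br r\mtx A\br x\doteq 0)$ universally, and therefore $\br r\mtx A\br b=0$ in $L$ for every such $\br r$.

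These compatibility identities are exactly what is needed to realize $\psi$ at $\br b$ in an extension of $L$. Form $N=(L\oplus R^m)/K$, where $K$ is the left submodule generated by the $k$ relations comprising ``$\mtx A\br b+\mtx B\br e\doteq 0$'' (with $\br e$ the standard basis of $R^m$). A straightforward computation identifies $\ker(L\to N)$ with the set of elements of the form $\br s\mtx A\br b$ with $\br s\mtx B=0$, all of which vanish by the previous paragraph; so $L\hookrightarrow N$ is an inclusion, and the image of $\br e$ in $N$ supplies $\br b''$ with $\mtx A\br b+\mtx B\br b''=0$. Since $L\in\sharp$ is pure in every containing module, $L$ is pure in $N$, and therefore $\br b\in\psi(L)$, as required. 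The main obstacle is the matrix bookkeeping in the last step (especially over a possibly non-commutative ring, where one must carefully distinguish left scalar action from transposition), but the kernel computation reduces precisely to the compatibility conditions that $\phi$'s unrelativized generation of $\qf_M(\br a)$ enforces.
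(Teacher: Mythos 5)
Your proof is correct, and it follows the same cycle (i)$\Rightarrow$(iii)$\Rightarrow$(ii)$\Rightarrow$(i) as the paper, with a single formula carried around the whole cycle (which is what gives the ``moreover'' clause); your (ii)$\Rightarrow$(i) --- pass to the injective hull, use that the hull is absolutely pure and that quantifier-free formulas reflect down to submodules --- is the paper's argument essentially verbatim. The genuine divergence is in (i)$\Rightarrow$(iii). The paper handles it by elementary duality: from $\br a\in\varphi(M)\subseteq\ann_M\D\varphi(R_R)$ it concludes that $\br r\,\br x\doteq 0$ lies in $\qf_M(\br a)$ for every $\br r\in\D\varphi(R_R)$, hence is implied by the generator, and then quotes the fact that $\ann_N\D\varphi(R_R)=\varphi(N)$ for absolutely pure $N$ (Prest, Prop.\,2.3.3). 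Your argument has exactly the same mathematical content but proves the quoted fact from scratch: the compatibility identities $\br r\mtx A\br b=0$ for all $\br r$ with $\br r\mtx B=0$ say precisely that $\br b\in\ann_L\D\psi(R_R)$, and your construction of $N=(L\oplus R^m)/K$, the kernel computation identifying $\ker(L\to N)$ with $\{\br s\mtx A\br b : \br s\mtx B=0\}$, and the appeal to absolute purity of $L$ in $N$ together constitute a direct proof of the nontrivial inclusion $\ann_L\D\psi(R_R)\subseteq\psi(L)$. What your route buys is self-containedness and explicit linear algebra over a possibly noncommutative ring; what the paper's route buys is brevity, by outsourcing that step to a standard duality result. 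Both are sound.
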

\begin{proof}
 For the nontrivial implications, first suppose $\psi\in \pp(\bar{a})$ generates $\qf_M(\bar{a})$. To show this formula $\sharp$-generates $\pp_M(\bar{a})$, let $\varphi\in \pp(\bar{a})$ and $N$ be absolutely pure.
We have to show that $\psi(N)\subseteq\varphi(N)$\,. 
Since $\bar{a} \in \varphi(M) \subseteq \ann_M\D\varphi(R_R)$\,, for all
$\bar{r}\in\D\varphi(R_R)$ the formula $\bar{r}\, \bar x \doteq 0$ is in
$\qf_M(\bar{a})$\,, hence $\psi\le\bar{r}\,\bar{x} \doteq 0$. In particular,
$\psi(N)\subseteq\ann_N\D\varphi(R_R)$. But as $N$ is absolutely pure, by \cite[Prop.2.3.3]{P2}, this latter set \emph{is} $\varphi(N)$.

Now assume $\qf_M(\bar{a})$ to be $\sharp$-generated by
$\psi\in\pp_M(\bar{a})$\,. To show $\psi$ generates $\qf_M(\bar{a})$,
  let $\alpha\in\qf_M(\bar{a})$ and $N$ be any module and $\bar b\in \psi(N)$. We have to show $\bar b\in \alpha(N)$. Let $N\subseteq E$ be an injective envelope. Being existential, the formula 
  $\psi$ is satisfied by $\bar b$ also in $E$. Since
$\psi\le_{\sharp}\alpha$, therefore $\bar b\in \alpha(E)$. But $\alpha$ is quantifier-free and $\bar b$ is in $N\subseteq E$, hence  $\bar b\in \alpha(N)$, as desired.
\end{proof}

\subsection{$\sharp$-Mittag-Leffler modules}\label{sharp-ML}
Another instance is $\cal K=\sharp_R$. As $\D(_R\flat)$ and $\sharp_R$ generate the same definable subcategory \cite[Prop.3.4.26]{P2}, we may then take $\cal L={_R\flat}$, and  the theorem shows that a module is $\sharp$-Mittag-Leffler iff it is $\flat$-atomic. Symmetrically,  $\D(\flat_R)$ and $_R\sharp$ generate the same definable subcategory, hence a module is $\flat$-Mittag-Leffler iff it is $\sharp$-atomic, a fact we haven't exploited in the previous subsection.

 The next example, though rather trivial, may be instructive as to what the role of $\cal K$ and $\cal L$.

\begin{exam} \label{torsion}
 Let $R$ be a (not necessarily commutative) domain and let $\Div$ denote the class of divisible right $R$-modules. Then any torsion left $R$-module is $\Div$--Mittag-Leffler (hence also  $\sharp$-Mittag-Leffler, for every absolutely pure module over a domain  is divisible): as a divisible module tensored with a torsion module is zero and direct products of divisible modules are divisible, the canonical map figuring in the definition  of  Mittag-Leffler module is vacuously monic, cf.\ \cite{Tf}.
  \end{exam}

\begin{exam} \label{notfp}
 Let $R$ be a (not necessarily commutative) domain with a two-sided ideal $I$ that is not finitely generated as a left ideal. Then the cyclic left $R$-module $R/I$ is not finitely presented. Yet it  is $\sharp$-Mittag-Leffler (even $\Div$--Mittag-Leffler), for it is torsion, see Example \ref{torsion}.  
 
In order to elucidate the concept, let us give another proof via atomicity. We have to verify that  $R/I$ is $\flat$-atomic. We know from Lemma \ref{Latfg} that it is enough to verify that the generator $1/I$ has $\flat$-finitely generated pp type. Now, this type is equivalent to saying $Ix=0$ or, more precisely, to the set of all formulas $rx=0$ with $r\in I$. Let $s$ be any non-zero element of $I$. I claim that the pp type of $1/I$ in $R/I$ is  $\flat$-generated by the formula $sx=0$. First of all, of course this formula is in the type, for $s(1/I)=0$ in $R/I$. To see, it $\flat$-implies the the rest of it, we have to verify that $sx=0\le_{\flat} rx=0$ for all $r\in I$. It satisfies to verify this implication in the left regular module $R$. But there, the only element satisfying $sx=0$ is $0$. And $0$ realizes \emph{any} pp type. (This shows that, given any pp type $p$ in any left $R$-module $M$, if $p$ contains a formula $sx=0$ for some non-zero $s\in R$, then this formula $\flat$-generates $p$.)
 \end{exam}

For other relativized Mittag-Leffler modules see \cite{Tf}.

\section{Purity}\label{Fpure}
Here we investigate three kinds of relativized purity.  The first of these is needed in the next section, the other two in the last.
 
\subsection{Left purity: pure monomorphisms} 
To prove the relativized version, Corollary \ref{puresep} below, of Raynaud and Gruson's separability result (\ref{RG}) at the top of this paper, one needs relativized versions of purity. An algebraic one was given in \cite[Thm.2.5(ii)]{HT} and a rather model-theoretic one in \cite[Prop.3.3]{Tf}. Here we show that they are the same. As it turns out, this purity goes back to Herzog's thesis
\cite[Sect.8]{Her}, where it was called  \emph{local}.

\begin{definition}[Herzog]\label{FpureHerzog}
An \emph{$\cal L$-pure monomorphism} is a monomorphism $f: N\to M$ (of left $R$-modules) such that $K\otimes f$  is a monomorphism for every $K\in\D\langle\cal L\rangle$ (the dual of the definable category generated by $\cal L$). An  \emph{$\cal L$-pure submodule}  of a module $M$ is  a submodule $N$ whose inclusion in $M$ is $\cal L$-pure, i.e., such that the corresponding map
$K\otimes N \to K\otimes M$ is monic for every  $K\in\D\langle\cal L\rangle$. As usual, we drop curly brackets around singletons.
\end{definition}

\begin{rem}
 It is an easy exercise using Herzog's criterion (and elementary duality) to show that it suffices to check the above monomorphism for all $K\in\cal K$, where $\cal K$ is any class such that $\langle\cal K\rangle=\D\langle\cal L\rangle$. In particular, $\cal L$-purity is the same as $\langle\cal L\rangle$-purity.
\end{rem}

 The next result  is essentially \cite[Prop.8.3]{Her}. Herzog's original proof was in terms of closed sets of the Ziegler spectrum. The above remark allows us to pass to a class that is closed under product and thus to apply McKinsey's Lemma, see \cite[Cor.9.1.7, p.417]{H} (or  \cite[Cor.6.3]{purity}), instead  of Ziegler's indecomposability calculus. I include such a proof.

\begin{lem}[]\label{FpureDef} 

 Given a submodule $N$ of a module $M$, the following are equivalent.
 \begin{enumerate}  [\upshape (i)]
\item $N$ is an $\cal L$-pure submodule of $M$.
\item For every tuple $\bar{n}$ in $N$, its pp type in $N$ is $\langle\cal L\rangle$-equivalent to its  pp type in $M$.
\end{enumerate}
\end{lem}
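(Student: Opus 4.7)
My plan is to prove the two implications of Lemma~\ref{FpureDef} separately, with Herzog's criterion (Fact~\ref{HC}) as the main bridge between tensor-vanishing and pp-formulas, and, for the harder direction, McKinsey's Lemma providing the essential compactness that the preamble advertised.

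The direction (ii) $\Rightarrow$ (i) is routine. Given $K\in\D\langle\cal L\rangle$ and matching tuples $\bar k$ in $K$ and $\bar n$ in $N$ with $\bar k\otimes\bar n=0$ in $K\otimes M$, Herzog produces $\psi\in\pp_M(\bar n)$ with $\bar k\in\D\psi(K)$; assumption (ii) together with Remark~\ref{Limpltypes} then yields a single $\phi\in\pp_N(\bar n)$ with $\phi\leq_{\cal L}\psi$; elementary duality turns this into $\D\psi\leq_{\D\langle\cal L\rangle}\D\phi$, whence $\bar k\in\D\phi(K)$; and a second application of Herzog, now to $\bar k\in\D\phi(K)$ and $\bar n\in\phi(N)$, delivers $\bar k\otimes\bar n=0$ in $K\otimes N$.

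For (i) $\Rightarrow$ (ii), the inclusion $\pp_N(\bar n)\subseteq\pp_M(\bar n)$ is automatic (pp formulas being preserved by morphisms), so by Remark~\ref{Limpltypes} it suffices to show, for each $\psi\in\pp_M(\bar n)$, that some \emph{single} $\phi\in\pp_N(\bar n)$ $\cal L$-implies $\psi$. My first step would be to establish, for every $K\in\D\langle\cal L\rangle$, the pointwise inclusion $\D\psi(K)\subseteq\bigcup_{\phi\in\pp_N(\bar n)}\D\phi(K)$: any $\bar k\in\D\psi(K)$ combined with $\bar n\in\psi(M)$ forces $\bar k\otimes\bar n=0$ in $K\otimes M$ via Herzog, hence in $K\otimes N$ via $\cal L$-purity, and a final Herzog application produces $\phi\in\pp_N(\bar n)$ with $\bar k\in\D\phi(K)$. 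Phrased model-theoretically, this says the implication $\D\psi(\bar x)\rightarrow\bigvee_{\phi\in\pp_N(\bar n)}\D\phi(\bar x)$ holds throughout $\D\langle\cal L\rangle$.

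To collapse this (possibly infinite) disjunction of pp formulas to a single disjunct I will invoke McKinsey's Lemma, applicable precisely because $\D\langle\cal L\rangle$ is a definable subcategory and therefore closed under direct products. It yields a single $\phi\in\pp_N(\bar n)$ with $\D\psi\leq_{\D\langle\cal L\rangle}\D\phi$, and elementary duality then delivers $\phi\leq_{\cal L}\psi$, as required. The main obstacle is precisely this collapsing step: without product-closure and McKinsey, the pointwise witnesses $\phi_{K,\bar k}$ do not assemble into a uniform $\phi$, and one would be thrown back either on Ziegler's indecomposability calculus (Herzog's original route) or on a character-dual construction extracting a separating tuple from a compactness-produced counterexample model in $\langle\cal L\rangle$.
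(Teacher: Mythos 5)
Your proof is correct and follows essentially the same route as the paper: the easy direction is the same Herzog's-criterion exercise (which the paper leaves implicit), and for (i) $\Rightarrow$ (ii) you establish exactly the same disjunction $\D\psi\rightarrow\bigvee_{\phi\in\pp_N(\bar n)}\D\phi$ over the product-closed class $\D\langle\cal L\rangle$, collapse it with McKinsey's Lemma, and dualize back. The only cosmetic difference is that the paper works with an arbitrary product-closed class $\cal K$ generating $\D\langle\cal L\rangle$ rather than with $\D\langle\cal L\rangle$ itself.
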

\begin{proof}
Consider a  tuple $\bar{n}$ in $N$ and denote its pp type in $M$ by $p$ and its pp type in $N$ by $q$. Clearly, $q\subseteq p$, hence, by compactness, these types are  $\langle\cal L\rangle$-equivalent  precisely when every $\phi\in p$ is  $\langle\cal L\rangle$-implied by some $\psi\in q$. As $\leq_\cal L$ is the same as $\leq_{\langle\cal L\rangle}$,  the last two conditions are indeed equivalent. 

It is an(other) easy exercise using Herzog's criterion  to show that any of these two conditions implies the first.
So we are left with the less straightforward implication (i) $\Rightarrow$ (ii). We have to show that for every tuple $\bar{n}$ in $N$ and every pp formula $\phi$ it satisfies in $M$, there is a pp formula $\psi\leq_{\cal L} \phi$ that  $\bar{n}$ satisfies in $N$. To this end, consider $\phi\in p$ and let let $\cal K$ be any class closed under product such that $\langle\cal K\rangle=\D\langle\cal L\rangle$. Notice, this latter equality means that  $\psi\leq_{\cal L} \phi$ iff $\D\phi\leq_{\cal K} \D\psi$. 

First note that for every choice of $\bar{k}\in \D\phi(K)$  in any $K\in \cal K$, there is $\psi\in q$ such that $\bar{k}\in \D\psi(K)$. Indeed, as $\bar{n}$ satisfies $\phi$, we have $\bar{k}\otimes \bar{n}=0$ in $K\otimes M$, hence, by (i), also in  $K\otimes N$, which yields $\psi\in q$ such that $\bar{k}$ satisfies $\D\psi$ in $K$.

This shows that $\forall\bar{x}(\D\phi\rightarrow \bigvee_{\psi\in q}\D\psi)$ holds in every member of $\cal K$. McKinsey's Lemma yields a single $\psi\in q$ such that $\forall\bar{x}(\D\phi\rightarrow \D\psi)$ holds in every member of $\cal K$, i.e., $\D\phi\leq_{\cal K} \D\psi$. By duality, $\psi\leq_{\cal L} \phi$, as desired.
\end{proof}

We see immediately how this looks when all the types $p$ are $\cal L$-finitely generated. 

\begin{cor}\label{FpureDefCor} 
Suppose $N$  is a submodule of  an $\cal L$-atomic module $M$. Given  a tuple $\bar{n}$  in $N$, let  $\phi_{\bar{n}}$ be an $\cal L$-generator of $\pp_M(\bar{n})$.

 Then the following are equivalent.
 \begin{enumerate}  [\upshape (i)]
\item $N$ is an $\cal L$-pure submodule of $M$.
\item For every tuple $\bar{n}$ in $N$, the type $\pp_N(\bar{n})$ is $\langle\cal L\rangle$-equivalent to  $\phi_{\bar{n}}$.
\item For every tuple $\bar{n}$ in $N$,  the type $\pp_N(\bar{n})$ contains a pp formula $\psi_{\bar{n}}\leq_{\cal L} \phi_{\bar{n}}$ (and hence $\psi_{\bar{n}}\sim_{\cal L} \phi_{\bar{n}}$).
\end{enumerate}

\noindent
In this case $N$ is  $\cal L$-atomic too.
\end{cor}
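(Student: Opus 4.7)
The plan is to derive this corollary as a direct specialization of Lemma \ref{FpureDef}, exploiting that the $\langle\cal L\rangle$-equivalence of $\pp_M(\bar{n})$ and $\pp_N(\bar{n})$ collapses to something much simpler once the larger type has an explicit $\cal L$-generator $\phi_{\bar{n}}$. Before anything, I would record the trivial half that is common to all three conditions: since $N\subseteq M$, we have $\pp_N(\bar{n})\subseteq \pp_M(\bar{n})$, and because $\phi_{\bar{n}}$ $\cal L$-generates $\pp_M(\bar{n})$ it automatically $\cal L$-implies every formula in $\pp_N(\bar{n})$. Thus the ``$\geq$'' direction of the $\langle\cal L\rangle$-equivalence in (ii) is free, and only the ``$\leq$'' direction needs work.

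For (i) $\Leftrightarrow$ (ii), I would simply invoke Lemma \ref{FpureDef}: $\cal L$-purity is equivalent to $\pp_N(\bar{n})\sim_{\langle\cal L\rangle}\pp_M(\bar{n})$ for every $\bar{n}$ in $N$, and as $\pp_M(\bar{n})\sim_{\cal L}\phi_{\bar{n}}$ by hypothesis, transitivity of $\sim_{\langle\cal L\rangle}$ swaps one statement for the other. For (ii) $\Leftrightarrow$ (iii), I would invoke Remark \ref{Limpltypes}: since $\pp_N(\bar{n})$ is closed under finite conjunction, the relation $\pp_N(\bar{n})\leq_{\langle\cal L\rangle}\phi_{\bar{n}}$ is, by compactness, equivalent to the existence of a single $\psi_{\bar{n}}\in\pp_N(\bar{n})$ with $\psi_{\bar{n}}\leq_{\cal L}\phi_{\bar{n}}$. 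The parenthetical $\psi_{\bar{n}}\sim_{\cal L}\phi_{\bar{n}}$ then comes for free, because $\phi_{\bar{n}}$, being an $\cal L$-generator of $\pp_M(\bar{n})$, $\cal L$-implies $\psi_{\bar{n}}\in\pp_N(\bar{n})\subseteq\pp_M(\bar{n})$ in turn.

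Finally, for the trailing assertion that $N$ is itself $\cal L$-atomic, I would observe that under (iii) the formula $\psi_{\bar{n}}$ is a member of $\pp_N(\bar{n})$ that, via $\psi_{\bar{n}}\leq_{\cal L}\phi_{\bar{n}}$ composed with the $\cal L$-generation of $\pp_M(\bar{n})$ by $\phi_{\bar{n}}$, $\cal L$-implies every formula in $\pp_N(\bar{n})\subseteq\pp_M(\bar{n})$. Hence $\psi_{\bar{n}}$ is an $\cal L$-generator of $\pp_N(\bar{n})$ for each $\bar{n}$ in $N$, witnessing $\cal L$-atomicity. No step here is substantive; the only point requiring care is to keep straight that condition (iii) demands $\psi_{\bar{n}}\in\pp_N(\bar{n})$ and not merely $\psi_{\bar{n}}\in\pp_M(\bar{n})$---it is precisely this membership clause that turns the equivalence into a witness of $\cal L$-atomicity for $N$, rather than only for $M$.
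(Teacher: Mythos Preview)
Your proof is correct and follows exactly the approach the paper intends: the paper's own proof consists of a single sentence---``Just note that $\phi_{\bar{n}}$ is $\cal L$- (hence $\langle\cal L\rangle$-) equivalent to $\pp_M(\bar{n})$''---and you have simply spelled out how that observation, combined with Lemma~\ref{FpureDef} and Remark~\ref{Limpltypes}, yields each equivalence and the final $\cal L$-atomicity claim.
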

\begin{proof}
Just note that $\phi_{\bar{n}}$ is $\cal L$- (hence $\langle\cal L\rangle$-) equivalent  to $\pp_M(\bar{n})$.
\end{proof}

To check $\cal L$-purity of a  finitely generated submodule it suffices to consider a generating tuple in Lemma \ref{FpureDef}. The argument is standard and similar to the one in  Lemma \ref{Latfg}:

\begin{lem}\label{Fpurefg}
Let $\br a$ be a tuple in $M$ and $N$ the submodule it generates. Then the following are equivalent.
 \begin{enumerate}  [\upshape (i)]
\item $N$ is an $\cal L$-pure submodule of $M$.
\item $\pp_N(\br{a})$ is $\langle\cal L\rangle$-equivalent to $\pp_M(\br{a})$.
\item For every pp formula $\phi$ in $\pp_M(\br{a})$, there is a pp formula $\psi\leq_{\cal L} \phi$  (and hence $\psi\sim_{\cal L} \phi$) in $\pp_N(\br{a})$.
\end{enumerate}
\end{lem}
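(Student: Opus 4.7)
The plan is to run the cycle (i) $\Rightarrow$ (ii) $\Rightarrow$ (iii) $\Rightarrow$ (i). For (i) $\Rightarrow$ (ii) I would simply specialize Lemma \ref{FpureDef} to the generating tuple $\bar a$, giving $\pp_N(\bar a)\sim_{\langle\cal L\rangle}\pp_M(\bar a)$ immediately. For (ii) $\Leftrightarrow$ (iii), observe that $\pp_N(\bar a)\subseteq\pp_M(\bar a)$ always, so $\langle\cal L\rangle$-equivalence reduces to the implication $\pp_N(\bar a)\geq_{\langle\cal L\rangle}\pp_M(\bar a)$; by Remark \ref{Limpltypes} (compactness in the axiomatizable class $\langle\cal L\rangle$, using that $\pp_N(\bar a)$ is closed under finite conjunction), this is precisely the statement that every $\phi\in\pp_M(\bar a)$ is $\cal L$-implied by some $\psi\in\pp_N(\bar a)$.

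The real content is (iii) $\Rightarrow$ (i), which I would prove by establishing the criterion (ii) of Lemma \ref{FpureDef} at an arbitrary tuple in $N$, imitating the trick in the proof of Lemma \ref{Latfg}. Let $\bar b$ be any tuple in $N$. Since $\bar a$ generates $N$, we can write $\bar b=\mtx B\bar a$ for some matrix $\mtx B$ over $R$. Given any $\psi=\psi(\bar z)\in\pp_M(\bar b)$, the substituted formula $\psi(\mtx B\bar x)$ is a pp formula in $\bar x$ that lies in $\pp_M(\bar a)$. Apply (iii) to obtain a pp formula $\phi(\bar x)\in\pp_N(\bar a)$ with $\phi\leq_{\cal L}\psi(\mtx B\bar x)$. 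Form
\[\phi_{\mtx B}(\bar z)\;:=\;\exists\bar x\bigl(\bar z=\mtx B\bar x\wedge\phi(\bar x)\bigr),\]
which is pp, and which $\bar b=\mtx B\bar a$ satisfies in $N$ via the witness $\bar a$; hence $\phi_{\mtx B}\in\pp_N(\bar b)$.

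It then remains to verify $\phi_{\mtx B}\leq_{\cal L}\psi$, which is routine: in any $L\in\cal L$, a realization $\bar c$ of $\phi_{\mtx B}$ comes with a witness $\bar x_0\in\phi(L)$ satisfying $\bar c=\mtx B\bar x_0$; by $\phi\leq_{\cal L}\psi(\mtx B\bar x)$ the tuple $\bar x_0$ satisfies $\psi(\mtx B\bar x)$ in $L$, so $\bar c=\mtx B\bar x_0\in\psi(L)$. Combining, every $\psi\in\pp_M(\bar b)$ is $\cal L$-implied by some member of $\pp_N(\bar b)$, so by the equivalence (ii) $\Leftrightarrow$ (iii) already established (applied now to $\bar b$ in place of $\bar a$, or directly via Remark \ref{Limpltypes}), $\pp_N(\bar b)\sim_{\langle\cal L\rangle}\pp_M(\bar b)$; Lemma \ref{FpureDef} then yields $\cal L$-purity of $N$ in $M$.

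The only delicate step is the passage from the generating tuple to an arbitrary tuple in $N$, and this is handled cleanly by the $\phi\mapsto\phi_{\mtx B}$ construction; everything else is bookkeeping with Lemma \ref{FpureDef} and the compactness remark.
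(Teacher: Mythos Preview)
Your proof is correct and follows essentially the same approach as the paper. The paper's own proof is a brief sketch: it notes that any tuple $\bar n$ in $N$ can be written as $\mtx B\bar a$, that $\phi(\mtx B\bar x)$ lies in the corresponding type of $\bar a$, and then declares it ``straightforward to verify that (ii) entails $\langle\cal L\rangle$-equivalence of $\pp_N(\bar n)$ and $\pp_M(\bar n)$''; you have simply unpacked this straightforward verification via the explicit $\phi_{\mtx B}$ construction (exactly as in the proof of Lemma~\ref{Latfg}) and routed it through (iii) rather than (ii), which is immaterial given the equivalence (ii)$\Leftrightarrow$(iii).
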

\begin{proof}
As $\br a$ generates $N$, for every tuple $\br n$ in $N$ there is a matrix $\mtx B$ such that $\br n = \mtx B\br a$. For every pp formula $\phi$ in the type of $\br n$ (in $N$ or $M$), the pp formula $\phi(\mtx B\br x)$  is in the corresponding type of $\br a$. Using this it is straightforward to verify that (ii) entails  $\langle\cal L\rangle$-equivalence of $\pp_N(\br{n})$ and $\pp_M(\br{n})$.
\end{proof}

The next, trivial example may serve to clarify some of the purity issues involved in relative implication (localized at a definable subcategory).

\begin{exam} Consider $\cal K$ the class of divisible (=injective) and $\cal L$ the class of torsionfree (=flat) abelian groups. Recall, they form definably dual definable subcategories of  $ \Ab=\mathbb Z$-$\Mod$.

Let $M$ be the cyclic abelian group of order $4$ and $N$ its subgroup of order $2$. Of course, there is nothing pure about the inclusion of $N$ in $M$. Yet, it is trivially $\cal L$-pure, for tensoring with a divisible group (with $\cal K$) annihilates every torsion group. 

What's happening on the level of pp formulas? Let $n$ be the generator of $N$ and consider its pp type $p$ in $M$ and $q$ in $N$. Clearly $q\subseteq p$. Consider the formula  $2|x \wedge 2x=0$. Call it $\phi$. Clearly, it is a generator of the type $p$. Let $\psi$ be the formula $2x=0$. This is a generator of the type $q$. So, in particular, they are both $\cal L$-generators of their corresponding types. But they are also  $\cal L$-equivalent, in fact, to $x=0$ (as $\mathbb Z$ is a domain), whence $\psi$ $\cal L$-implies \emph{any} pp formula. So $\psi$ is also an  $\cal L$-generator of $p$, but $\phi$ is not an  $\cal L$-generator  of the type $q$ for the mere reason that it is not contained in it.
\end{exam}

\subsection{Partial/Restricted left purity}

We need a restricted form of purity  in Section \ref{ctblechains} below.

\begin{definition}\label{restricted} 
Let $C$ be a submodule (or even subset) of the module $N$ and $f: N\to M$ a homomorphism.
 \begin{enumerate}[\upshape(1)]
\item The map $f$  is said to be \emph{$\cal L$-pure on $C$} if for every tuple $\bar{c}$ in $C$, its pp type in $N$ is $\langle\cal L\rangle$-equivalent to the  pp type of $f(\bar{c})$ in $M$.
\item $f$ is said  to be \emph{monic on $C$} if $\ker f \cap C=\emptyset$.
\end{enumerate}
\end{definition}

\begin{rem}\textbf{}
\begin{enumerate}[\upshape(1)]
 \item Clearly, $f: N\to M$ is $\cal L$-pure iff it is $\cal L$-pure on $N$.
 \item It is easy to see that the versions of Lemma \ref{FpureDef} and Cor.\ \ref{FpureDefCor} hold for $\cal L$-purity on $C$ when the tuple $\bar{n}$ runs only over $C$ (instead of over all of $N$).
\end{enumerate}
\end{rem}

We are not trying to study this restricted concept in generality here, but rather make use of it in the special context of \Lo-limits below.

\subsection{A special assumption}\label{special} 
The relevance of  $R_R$ being a member of $\cal K$ was pointed out in \cite{HT}. The underlying reason is the following. While, in general, under an $\cal L$-pure homomorphism, a given tuple in the domain and its image have  $\cal L$--equivalent pp types, under the assumption $R_R\in\langle\cal K\rangle$,  the \ quantifier-free parts of those pp types are actually \emph{equal} (or  $\RMod$--equivalent). Note, as $\langle R_R\rangle = \langle\flat_R\rangle $, the clause $R_R\in\langle\cal K\rangle$ is, by elementary duality, equivalent to $_R\sharp\subseteq\langle\cal L\rangle$.

\begin{lem}
 $\sharp$-equivalent types of tuples (in any modules) have identical quantifier-free part.
\end{lem}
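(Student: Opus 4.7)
The plan is to exploit two basic preservation facts: a pp formula, being existential, is preserved going up along any module inclusion, while a quantifier-free pp formula (a conjunction of linear equations $\bar r\,\bar x\doteq 0$) satisfied by a tuple in a supermodule is automatically satisfied by that same tuple in any submodule containing it. The bridge between the two is the injective envelope, which lies in $_R\sharp$. This is precisely the maneuver already deployed in the (ii)$\Rightarrow$(i) step of the proof of Lemma \ref{qfgen}.

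Concretely, I would fix pp types $p=\pp_M(\bar a)$ and $q=\pp_N(\bar b)$ with $p\sim_\sharp q$, and pick an arbitrary quantifier-free pp formula $\alpha\in p$ (so $\alpha(\bar a)$ holds in $M$). The aim is to deduce $\alpha\in q$; by the symmetry of $\sim_\sharp$ the reverse containment will be obtained identically, yielding $\qf_M(\bar a)=\qf_N(\bar b)$.

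Embed $N$ into an injective envelope $E\supseteq N$, so $E$ is absolutely pure. Every formula in $q$ being pp and hence preserved under the extension $N\subseteq E$, the tuple $\bar b$ still realizes $q$ in $E$. Since $q\leq_\sharp p$ and $E\in{_R\sharp}$, $\bar b$ therefore realizes $p$ in $E$ as well; in particular $\alpha$ is true of $\bar b$ in $E$. But $\alpha$ is quantifier-free and $\bar b$ lies in $N\subseteq E$, so $\alpha$ holds of $\bar b$ already in $N$, giving $\alpha\in q$, as desired.

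I do not foresee any real obstacle: all the ingredients (injective envelopes, upward preservation of pp formulas, downward transfer of quantifier-free formulas to submodules) are standard and have already been combined in precisely this way in the proof of Lemma \ref{qfgen}. The one small observation worth flagging is that the above argument in fact uses only the one-sided implication $q\leq_\sharp p$ to conclude $\qf(p)\subseteq\qf(q)$; the other inclusion needs the opposite $\sharp$-implication, and together they deliver the stated equality.
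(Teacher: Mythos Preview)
Your proof is correct and follows essentially the same route as the paper's: embed one of the modules in an injective (hence absolutely pure) envelope, push the pp type up, use the $\sharp$-implication there, and then pull the quantifier-free formula back down to the submodule. Your closing observation that only the one-sided implication $q\leq_\sharp p$ is needed for each inclusion is exactly what the paper does as well.
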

\begin{proof} 
 Let $p =\pp_N(\bar{c})$ and $q =\pp_M(\bar{b})$. We are going to show that $p\leq_{\sharp}q$ implies that the quantifier-free part of $q$ is in $p$, i.e., $\qf_M(\bar{b})\subseteq \qf_N(\bar{c})$.
 To this end, embed $N$ in an injective module $E$. Then $\bar{c}$ satisfies $p$ also in $E$. As $E\in \sharp$, $\bar{c}$ satisfies also $q$ in $E$. The quantifier-free part $\qf_M(\bar{b})$ passes down to the submodule $N$, so $\bar{c}$ satisfies $\qf_M(\bar{b})$ in $N$, whence  $\qf_M(\bar{b})\subseteq \qf_N(\bar{c})$, as desired.
\end{proof}

\begin{rem}\label{qfsharpimpl}
 The same proof shows that $\alpha\leq_{\sharp}\phi$ implies $\alpha\leq\phi$ provided $\al$ is quantifier-free. 
\end{rem}
\begin{lem}\label{specialmonic} Suppose $R_R\in\langle\cal K\rangle$ or, equivalently, $_R\sharp\subseteq\langle\cal L\rangle$.

\begin{enumerate}[\upshape(1)]
\item Every map  $\cal L$-pure on a  submodule preserves quantifier-free parts of pp types of tuples from that submodule.
\item Every map  $\cal L$-pure on a  submodule is  monic on it. 
\item $_R\sharp$-pure maps are monomorphisms.
 \end{enumerate}
\end{lem}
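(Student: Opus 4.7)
The plan is to prove part (1) by directly unpacking the definition of $\cal L$-purity on a submodule together with the unnumbered lemma immediately preceding the statement (about $\sharp$-equivalent types having identical quantifier-free parts). Parts (2) and (3) then fall out of (1) essentially for free.

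For (1), suppose $f:N\to M$ is $\cal L$-pure on $C$ and $\bar{c}$ is a tuple in $C$. By Definition \ref{restricted}(1), $\pp_N(\bar{c})$ is $\langle\cal L\rangle$-equivalent to $\pp_M(f(\bar{c}))$. The standing hypothesis ${_R\sharp}\subseteq\langle\cal L\rangle$ ensures that every $\langle\cal L\rangle$-implication is automatically a $\sharp$-implication, so the two types are in particular $\sharp$-equivalent. The preceding (unnumbered) lemma then yields $\qf_N(\bar{c})=\qf_M(f(\bar{c}))$, which is exactly the claim.

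For (2), given any $c\in C\cap\ker f$, apply (1) to the singleton tuple $c$: the quantifier-free pp formula $x\doteq 0$ lies in $\qf_M(f(c))=\qf_N(c)$, so $c=0$. Thus $f$ is injective on $C$, which is the intended reading of ``monic on $C$'' (since $0$ trivially lies in $\ker f\cap C$ whenever $C$ is a submodule). For (3), specialize to $\cal L={_R\sharp}$; absolutely pure modules form a definable subcategory, so $\langle\cal L\rangle={_R\sharp}$ and the hypothesis of (2) is satisfied automatically. Any ${_R\sharp}$-pure map $f:N\to M$ is $\cal L$-pure on all of $N$, so (2) with $C=N$ shows that $f$ is a monomorphism.

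I do not foresee any real obstacle: the only step that needs a moment of care is aligning the restricted-purity definition with the preceding lemma about $\sharp$-equivalent types, but this is immediate once one observes that ${_R\sharp}\subseteq\langle\cal L\rangle$ makes the relation $\leq_{\langle\cal L\rangle}$ refine $\leq_\sharp$.
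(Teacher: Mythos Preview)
Your proof is correct and follows essentially the same route as the paper: reduce (1) to the preceding lemma on $\sharp$-equivalent types via the inclusion ${_R\sharp}\subseteq\langle\cal L\rangle$, then derive (2) and (3) as immediate specializations. The only cosmetic difference is that the paper invokes the quantifier-free formula $\bar x\doteq\bar y$ for (2) whereas you use $x\doteq 0$; both work, and your remark about the intended reading of ``monic on $C$'' when $C$ is a submodule is apt.
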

\begin{proof}
The last statement is a special case of the second, where the subset in question is the entire module, while the second is a special case of the first, for the (quantifier-free) formula $\bar{x}\, \dot=\, \bar{y}$.

For the first statement, let $f: N\to M$ be $\cal L$-pure on a set $C\subset N$ and let $\bar{c}$ be a tuple in $C$ and $\bar{b}$  the image of $\bar{c}$  in $M$. The assertion now is $\qf_N(\bar{c})= \qf_M(\bar{b})$, which readily follows from the previous lemma.

\end{proof}

\subsection{Right purity: pure epimorphisms}
Dually to the lemma characterizing  $\cal L$-pure monomorphisms, we make the following definition.

\begin{definition}\label{FpureEpiDef}
 A \emph{uniform $\cal L$-pure epimorphism} is a homomorphism $g: N\to M$ (of left $R$-modules) such for every tuple $\bar m$ in $M$ there is a $g$-preimage $\bar n$ in $N$ such that 
their corresponding pp types are $\cal L$-equivalent. We drop the prefix $\cal L$ when it is all of $\RMod$.
\end{definition}

We will encounter uniform pure epimorphisms in Thm.\,\ref{???Existence}. For reference there, we state the following easy fact.

\begin{lem}\label{Lpureepic}
 Every uniformly $\cal L$-pure epimorphic image of a $\cal K$-Mittag-Leffler module is $\cal K$-Mittag-Leffler.
\end{lem}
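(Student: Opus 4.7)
The strategy is to translate the Mittag-Leffler condition on both sides of the map into atomicity via the Main Theorem \ref{MThm}, and then use the preservation of $\cal L$-finite generation under $\langle\cal L\rangle$-equivalence to transport finite generators from $N$ to $M$ tuple by tuple.

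More concretely, let $g\colon N\to M$ be a uniformly $\cal L$-pure epimorphism with $N$ being $\cal K$-Mittag-Leffler. By Theorem \ref{MThm}, $N$ is $\cal L$-atomic. I will check that $M$ is $\cal L$-atomic as well, which again by Theorem \ref{MThm} gives that $M$ is $\cal K$-Mittag-Leffler. Given any tuple $\bar m$ in $M$, use the defining property of uniform $\cal L$-purity (Definition \ref{FpureEpiDef}) to pick a tuple $\bar n$ in $N$ with $g(\bar n)=\bar m$ and $\pp_N(\bar n)\sim_{\cal L}\pp_M(\bar m)$. By $\cal L$-atomicity of $N$, the type $\pp_N(\bar n)$ is $\cal L$-finitely generated, say by some $\phi\in\pp_N(\bar n)$.

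The last step is to observe that $\pp_M(\bar m)$ is then also $\cal L$-finitely generated. This is precisely the content of the Remark right before Lemma \ref{Latfg} (the transfer of $\cal L$-finite generation across $\langle\cal L\rangle$-equivalent types): since $\leq_{\cal L}$ and $\leq_{\langle\cal L\rangle}$ agree on formulas, the $\cal L$-equivalence supplied by the definition is automatically $\langle\cal L\rangle$-equivalence, and so the generator $\phi$ of $\pp_N(\bar n)$ is $\cal L$-implied by some $\psi\in\pp_M(\bar m)$, which in turn $\cal L$-implies all of $\pp_M(\bar m)$. Hence every tuple in $M$ has $\cal L$-finitely generated pp type, i.e., $M$ is $\cal L$-atomic.

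I do not expect a serious obstacle here: the main work has already been done in the Main Theorem and in the preservation remark cited above; the definition of uniform $\cal L$-pure epimorphism is engineered precisely to make this argument go through tuple-wise. The only point of minor care is ensuring that we genuinely pick a \emph{preimage} $\bar n$ of $\bar m$ (guaranteeing surjectivity and that we are comparing the right pair of types), which is built into Definition \ref{FpureEpiDef}.
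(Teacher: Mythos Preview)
Your proposal is correct and follows the same overall strategy as the paper: translate via Theorem~\ref{MThm} to $\cal L$-atomicity, pick a preimage $\bar n$ with $\cal L$-equivalent pp type, and transfer the $\cal L$-generator.

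The paper's execution is a touch more direct, however, and bypasses the appeal to the Remark before Lemma~\ref{Latfg} (and hence to compactness). Since $g$ is a homomorphism with $g(\bar n)=\bar m$, pp formulas are preserved, so $\pp_N(\bar n)\subseteq\pp_M(\bar m)$ as sets. In particular the $\cal L$-generator $\phi\in\pp_N(\bar n)$ already lies in $\pp_M(\bar m)$; combined with $\phi\leq_{\cal L}\pp_N(\bar n)\leq_{\cal L}\pp_M(\bar m)$ (just transitivity of $\leq_{\cal L}$), the same $\phi$ $\cal L$-generates $\pp_M(\bar m)$. Your detour through the Remark to produce a possibly different $\psi\in\pp_M(\bar m)$ is unnecessary, and with it the need to upgrade $\cal L$-equivalence to $\langle\cal L\rangle$-equivalence (your stated justification for that upgrade---agreement of $\leq_{\cal L}$ and $\leq_{\langle\cal L\rangle}$ on \emph{formulas}---does not by itself extend to types, though here it happens to go through because one of the types is $\cal L$-equivalent to a single formula).
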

\begin{proof}
 Let $g: N\to M$ be a uniform $\cal L$-pure epimorphism and $\bar m$ a tuple in $M$. Let $p$ be its pp type. To see that $p$ is finitely $\cal L$-generated, choose a preimage  $\bar n$  with $\cal L$-equivalent pp type, $q$ say. By hypothesis, the pp type $q$  contains a pp formula $\phi$ that $\cal L$-generates it. This formula is contained in  its image $p$ too, and as $p$ and $q$ are $\cal L$-equivalent, the former is $\cal L$-generated by $\phi$ as well.
\end{proof}

\section{Pure separation}\label{separation}

 In the first part I prove  the relativized version of Raynaud and Gruson's separability result (\ref{RG}) from the introduction. Previous, and equivalent versions are \cite[Thm.2.5(ii)]{HT} and  \cite[Prop.3.3]{Tf}. But with a new and more systematic view on relativized purity, it seems justified to include a proof of this result, especially since that very proof goes back again to the unpublished \cite{habil}.\footnote{Where it was the definition that was missing, see the introduction to Section \ref{Fpure}.} Another variant of this phenomenon is considered in the second part. I make a uniform definition.

\begin{definition}
 Let $\kappa$ be a cardinal and $\cal C\subseteq\RMod$. A module $M$ is said to be \emph{$\kappa$-$\cal L$-pure separable by (modules from) $\cal C$} if every subset $A$ of $M$ of cardinality $<\kappa$ is contained in an $\cal L$-pure submodule of $M$ that is a member of $\cal C$.
 
 I say \emph{countably $\cal L$-pure separable} for $\aleph_1$-$\cal L$-pure separable and  \emph{finitely $\cal L$-pure separable} for $\aleph_0$-$\cal L$-pure separable.
\end{definition}

The focus here lies in classes $\cal C$ of $\cal K$-Mittag-Leffler (=$\cal L$-atomic) modules.

\begin{rem}\label{sepML} (1) By the Main Theorem, a module $\cal L$-pure separable by $\cal K$-Mittag-Leffler modules is certainly $\cal K$-Mittag-Leffler.

\noindent
(2)  Since unions of chains of $\cal L$-pure submodules are $\cal L$-pure, it is easy to see that  a module finitely $\cal L$-pure separable by finitely generated  $\cal K$-Mittag-Leffler modules is also countably $\cal L$-pure separable by countably generated  $\cal K$-Mittag-Leffler modules.
\end{rem}

\subsection{Countable separation}
First of all, Corollary \ref{FpureDefCor} implies

\begin{cor} 
$\cal L$-pure submodules  of $\cal K$-Mittag-Leffler modules are $\cal K$-Mittag-Leffler.\qed
\end{cor}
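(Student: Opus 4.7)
The plan is to observe that the statement is an essentially immediate consequence of the Main Theorem (Theorem \ref{MThm}) combined with Corollary \ref{FpureDefCor}, and indeed this is how it is advertised just before the statement. By Theorem \ref{MThm}, being $\cal K$-Mittag-Leffler is the same as being $\cal L$-atomic. So it suffices to show that every $\cal L$-pure submodule $N$ of an $\cal L$-atomic module $M$ is itself $\cal L$-atomic, i.e., that every tuple $\bar n$ in $N$ has an $\cal L$-finitely generated pp type in $N$.

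First I would fix an arbitrary tuple $\bar n$ in $N$. Since $M$ is $\cal L$-atomic, its pp type $\pp_M(\bar n)$ is $\cal L$-generated by some pp formula $\phi_{\bar n} \in \pp_M(\bar n)$. Now I invoke Corollary \ref{FpureDefCor}, applied to the inclusion $N \subseteq M$: the equivalence of (i) and (iii) there provides a pp formula $\psi_{\bar n} \in \pp_N(\bar n)$ with $\psi_{\bar n} \leq_{\cal L} \phi_{\bar n}$ (and hence $\psi_{\bar n} \sim_{\cal L} \phi_{\bar n}$). Since $\phi_{\bar n}$ $\cal L$-generates $\pp_M(\bar n) \supseteq \pp_N(\bar n)$ and $\psi_{\bar n}$ is $\cal L$-equivalent to $\phi_{\bar n}$, the formula $\psi_{\bar n}$ lies in $\pp_N(\bar n)$ and $\cal L$-generates it, so $\pp_N(\bar n)$ is $\cal L$-finitely generated, as required.

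There is really no obstacle here: all the substance was already absorbed into Lemma \ref{FpureDef} (the McKinsey's Lemma argument through Herzog's criterion) and into its specialization in Corollary \ref{FpureDefCor}, which already records ``$N$ is $\cal L$-atomic too'' as its concluding sentence. The present corollary merely repackages that conclusion through Theorem \ref{MThm}, replacing ``$\cal L$-atomic'' by ``$\cal K$-Mittag-Leffler'' on both sides of the implication.
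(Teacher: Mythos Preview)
Your proof is correct and is exactly the intended one: the paper gives no proof beyond the \qed, since the result is precisely the last sentence of Corollary~\ref{FpureDefCor} (``In this case $N$ is $\cal L$-atomic too'') translated via Theorem~\ref{MThm}. Your unpacking of condition (iii) to exhibit the generator $\psi_{\bar n}\in\pp_N(\bar n)$ is the right way to make that sentence explicit.
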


That there are enough such $\cal L$-pure submodules
 can be viewed as a version of the downward L\"owenheim-Skolem Theorem, where `elementary submodule' is weakened to `pure submodule' (which means that the realm of formulas is restricted to pp formulas) and `cardinality of the language' is strengthened to `countably generated' (no matter what that cardinality). Accordingly, a proof of (\ref{RG}) using pp formulas was given in \cite[Lemma 3.11]{habil}\footnote{For another generalization of Mittag-Leffler module, where purities given by classes of finitely presented modules other than $\Rmod$ are considered.} and \cite{ML}.  A version of this proof closer to the original L\"owenheim-Skolem Theorem, \cite[Thm.1.3.27]{P2}, carries over one-to-one to the more general case of  $\cal K$-Mittag-Leffler  modules. 
 
\begin{prop}\label{ctblesep} Every countable subset of a $\cal K$-Mittag-Leffler   module is contained in a countably generated $\cal L$-pure submodule that is  $\cal K$-Mittag-Leffler. 
\end{prop}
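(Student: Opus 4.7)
The plan is a downward L\"owenheim-Skolem style construction, tailored to pp formulas. First, by Theorem \ref{MThm}, the $\cal K$-Mittag-Leffler module $M$ is $\cal L$-atomic, so every tuple $\bar{m}$ in $M$ has an $\cal L$-generator $\phi_{\bar m}$ for its pp type $\pp_M(\bar m)$. Once I produce a countably generated submodule $N$ of $M$ containing the given countable set $A_0$ and $\cal L$-pure in $M$, Corollary \ref{FpureDefCor} automatically gives that $N$ is $\cal L$-atomic, hence $\cal K$-Mittag-Leffler by Theorem \ref{MThm} again. So the task reduces to constructing such an $N$.

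I will iteratively enlarge a countable set $C_0 \subseteq C_1 \subseteq \ldots \subseteq M$ with $A_0 \subseteq C_0$, and finally let $N = \langle \bigcup_n C_n\rangle_M$. Given $C_n$, enumerate its (countably many) finite tuples $\bar a$; for each such $\bar a$ write the chosen $\cal L$-generator of $\pp_M(\bar a)$ in the form $\phi_{\bar a} = \exists \bar y\, \alpha_{\bar a}(\bar x, \bar y)$, pick a witness tuple $\bar b_{\bar a}$ in $M$ with $\alpha_{\bar a}(\bar a, \bar b_{\bar a})$, and set $C_{n+1} := C_n \cup \{\bar b_{\bar a} : \bar a \text{ a tuple from } C_n\}$. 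A countable union of finite enlargements is countable, so each $C_n$ is countable, and therefore $N$ is countably generated.

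It remains to check $\cal L$-purity. Any finite tuple $\bar n$ in $N$ can be written as $\mtx B \bar a$ for some matrix $\mtx B$ over $R$ and some finite tuple $\bar a$ of elements of $C_k$, for $k$ large enough. By construction, witnesses for $\phi_{\bar a}$ lie in $C_{k+1} \subseteq N$, so $\bar a$ realizes $\phi_{\bar a}$ already in $N$. Since $\phi_{\bar a}$ is an $\cal L$-generator of $\pp_M(\bar a)$, this shows $\pp_N(\bar a)$ and $\pp_M(\bar a)$ are $\langle\cal L\rangle$-equivalent; the argument of Lemma \ref{Fpurefg} (substituting $\mtx B$ into any $\psi \in \pp_M(\bar n)$ to pull it back to $\pp_M(\bar a)$ and then forward via the equivalence) transfers this equivalence to $\bar n$. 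By Lemma \ref{FpureDef}, $N$ is $\cal L$-pure in $M$, finishing the proof.

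The main obstacle is not conceptual but bookkeeping: when $R$ is uncountable, a countably generated submodule need not be countable as a set, so one cannot naively ``enumerate all finite tuples of $N_n$'' at each stage. The fix built into the plan above is to enumerate tuples only from the countable generating set $C_n$, not from the submodule it generates, and to rely on Lemma \ref{Fpurefg} to lift purity from finite tuples of generators to arbitrary tuples of $N$.
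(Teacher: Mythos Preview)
Your proof is correct and follows essentially the same downward L\"owenheim--Skolem approach as the paper. The only organizational difference is that the paper builds a \emph{single} growing tuple $(\bar a_0,\ldots,\bar a_i)$ and at each step adds just one witness tuple for the $\cal L$-generator of its pp type (together with the next $c_{i+1}$), whereas you add witnesses for \emph{all} finite tuples from the current layer $C_n$; both routes close off in the same way, by reducing $\cal L$-purity for arbitrary tuples of $N$ to the generating tuples via the substitution argument of Lemma~\ref{Fpurefg}.
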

\begin{proof} 
Suppose  $M$ is a  $\cal K$-Mittag-Leffler module and $C$ a countable subset.  We are going to construct an $\cal L$-pure submodule $N$ of $M$ that contains $C$ (which then is $\cal K$-Mittag-Leffler).

First, list $C$ as $( c_i )_{i<\omega}$. Then successively extend this  to a list of  tuples  as follows. 
 Starting with  $\bar a_0$ the singleton $c_0$, (or any other tuple containing it), in the $i$th step ($i\geq 0$), choose a pp formula $\phi_i(\bar x_0, \ldots, \bar x_i)$ in $p_i:=\pp_M(\bar a_0, \ldots,\bar a_i)$ that $\cal L$-generates $p_i$. Write $\phi_i$ as $\exists\bar x'_{i+1} \alpha_i(\bar x_0, \ldots, \bar x_i, \bar x'_{i+1})$ with $\alpha_i$ quantifier-free. Choose a witness $\bar a'_{i+1}$ for $\bar x'_{i+1}$  in $M$, concatenate it with $c_{i+1}$ and call the resulting tuple $\bar a_{i+1}$. Accordingly, let $\bar x_{i+1}$ be the tuple of variables obtained from  $\bar x'_{i+1}$ by concatenating one more variable for $c_{i+1}$. We add a dummy variable for that also in $\alpha_i$ so that we may think of its variables as being $\bar x_0, \ldots, \bar x_i, \bar x_{i+1}$ (rather than ending in $\bar x'_{i+1}$). (In case, $\phi_i$ is quantifier-free, $\bar a_{i+1}$ would be the singleton $c_{i+1}$.)

Now, the module, $N$, generated by the $\bar a_i$  is $\cal L$-pure by construction. Indeed,  since $\phi_i$ $\cal L$-generates $p_i$ while its witnesses are inside $N$, $\cal L$-purity is guaranteed for the generators of $N$. But it's a simple exercise to verify the same for any other tuple in $N$ (this was done in \cite[Lemma 1.5]{habil}, cf.\, \cite[Fact 2.4]{PR}). 
\end{proof}

\begin{cor}\label{puresep}
 A module is $\cal K$-Mittag-Leffler iff it is countably $\cal L$-pure separable by countably generated $\cal K$-Mittag-Leffler  modules.\qed
\end{cor}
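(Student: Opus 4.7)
The plan is to observe that both directions of this corollary are essentially immediate from results already established in the excerpt, so the proof reduces to combining two ingredients and checking that the definitions match up.

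For the forward direction, I would simply invoke Proposition \ref{ctblesep}: it tells us that in any $\cal K$-Mittag-Leffler module $M$, every countable subset $C$ is contained in a countably generated $\cal L$-pure submodule that is itself $\cal K$-Mittag-Leffler. Unwinding the definition of ``countably $\cal L$-pure separable by countably generated $\cal K$-Mittag-Leffler modules'' (i.e.\ the $\kappa=\aleph_1$ instance with $\cal C$ the class of countably generated $\cal K$-Mittag-Leffler modules), one sees this is verbatim the conclusion of Proposition \ref{ctblesep}. No additional work is required.

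For the reverse direction, assume $M$ is countably $\cal L$-pure separable by countably generated $\cal K$-Mittag-Leffler modules. By Theorem \ref{MThm}, it suffices to verify that $M$ is $\cal L$-atomic. Given any finite tuple $\br m$ in $M$, apply the separation hypothesis to the (countable, indeed finite) subset consisting of the entries of $\br m$ to produce an $\cal L$-pure submodule $N \subseteq M$ containing $\br m$ that is $\cal K$-Mittag-Leffler. Theorem \ref{MThm} applied inside $N$ yields a pp formula $\phi \in \pp_N(\br m)$ which $\cal L$-generates $\pp_N(\br m)$. By $\cal L$-purity of $N$ in $M$ (Lemma \ref{FpureDef}), the types $\pp_N(\br m)$ and $\pp_M(\br m)$ are $\langle\cal L\rangle$-equivalent, so the same $\phi$ $\cal L$-generates $\pp_M(\br m)$. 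Hence $M$ is $\cal L$-atomic, which by Theorem \ref{MThm} again means $\cal K$-Mittag-Leffler. Alternatively, this direction is exactly the content of Remark \ref{sepML}(1), and one may simply cite it.

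I do not anticipate any real obstacle here: the corollary is explicitly flagged with \qed in the excerpt precisely because the substantive content has already been carried out, one half in Proposition \ref{ctblesep} (the nontrivial ``L\"owenheim--Skolem style'' construction) and the other half in the elementary Remark \ref{sepML}(1) using Theorem \ref{MThm} together with the interaction between $\cal L$-purity and finite generation of pp types recorded in Lemma \ref{FpureDef}/Corollary \ref{FpureDefCor}. Thus the proof is a short two-direction citation rather than a new argument.
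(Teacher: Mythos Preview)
Your proposal is correct and matches the paper's approach exactly: the corollary carries a bare \qed\ because the forward direction is Proposition~\ref{ctblesep} verbatim and the reverse direction is Remark~\ref{sepML}(1), which you both cite and unpack correctly via Theorem~\ref{MThm} and Lemma~\ref{FpureDef}.
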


\subsection{Finite separation} 
Finitely pure separated modules were characterized  in \cite[Rem.3.13]{habil}.\footnote{and called \emph{strong Mittag-Leffler modules}---for purities as mentioned in the previous footnote.} We desribe the $\cal L$-pure variant of this particular kind of $\cal K$-Mittag-Leffler module in the same way now.
 Key is the following.

\begin{lem} \cite[Rem.3.13]{habil}
If the pp type of a tuple $\br a$ in a module $M$ is $\cal L$-generated by a quantifier-free pp formula
, then the submodule generated by $\br a$ is $\cal L$-pure in $M$.

If $M$ is $\cal K$-Mittag-Leffler, the converse is true.
\end{lem}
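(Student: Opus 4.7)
The forward direction will be a direct application of Lemma \ref{Fpurefg}(iii). Suppose $\alpha$ is a quantifier-free pp formula that $\cal L$-generates $p:=\pp_M(\br a)$. Because $\alpha$ is quantifier-free and $\alpha\in p$, it holds of $\br a$ already in $N:=\langle\br a\rangle_M$, so $\alpha\in\pp_N(\br a)$. For any $\phi\in p$ we have $\alpha\leq_{\cal L}\phi$ by $\cal L$-generation, so taking $\psi:=\alpha$ fulfils the hypothesis of Lemma \ref{Fpurefg}(iii). Hence $N$ is $\cal L$-pure in $M$.

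For the converse, assume $M$ is $\cal K$-Mittag-Leffler, hence $\cal L$-atomic by Theorem \ref{MThm}, and $N=\langle\br a\rangle_M$ is $\cal L$-pure in $M$. Pick an $\cal L$-generator $\phi\in p=\pp_M(\br a)$. Applying Lemma \ref{Fpurefg}(iii) in the other direction, there is $\psi\in\pp_N(\br a)$ with $\psi\leq_{\cal L}\phi$. Since $\phi$ $\cal L$-generates $p$ and $\psi\in p$ with $\psi\leq_{\cal L}\phi$, the formula $\psi$ itself $\cal L$-generates $p$.

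The key remaining step is to replace $\psi$ by a quantifier-free formula without losing the $\cal L$-generating property. Write $\psi=\exists\br y\,\beta(\br x,\br y)$ with $\beta$ quantifier-free. Since $\psi\in\pp_N(\br a)$, there is a witness $\br b\in N$ with $\beta(\br a,\br b)$ holding. Crucially, $N$ is generated by $\br a$, so $\br b=\mtx C\br a$ for some matrix $\mtx C$ over $R$. Set
\[
\alpha(\br x):=\beta(\br x,\mtx C\br x),
\]
which is quantifier-free. Then $\alpha(\br a)$ holds in $M$, so $\alpha\in p$, and by taking $\br y=\mtx C\br x$ as a witness the implication $\alpha\leq\psi$ holds in every module, in particular $\alpha\leq_{\cal L}\psi\leq_{\cal L}\phi$. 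Hence $\alpha$ is a quantifier-free $\cal L$-generator of $p$.

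The main obstacle is this last manoeuvre: bare $\cal L$-purity together with $\cal L$-atomicity only produces a pp generator in $\pp_N(\br a)$, and one must exploit that the ambient tuple $\br a$ generates $N$ so that any existential witness is itself an $R$-linear combination of $\br a$, permitting quantifier elimination by substitution.
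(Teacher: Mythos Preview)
Your proof is correct and follows essentially the same route as the paper's own argument: both directions invoke Lemma~\ref{Fpurefg}(iii), and for the converse both pull a witness for $\psi\in\pp_N(\br a)$ back to a linear combination $\mtx C\br a$ of the generating tuple and substitute to obtain the quantifier-free $\alpha(\br x)=\beta(\br x,\mtx C\br x)$ with $\alpha\leq\psi\leq_{\cal L}\phi$. The only cosmetic difference is that you explicitly note $\psi$ already $\cal L$-generates $p$ before passing to $\alpha$, whereas the paper goes straight to the chain of implications.
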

\begin{proof}
Suppose,  $\pp_{M}(\br a)$ is   $\cal L$-generated by the quantifier-free formula $\al$. Then $\alpha\leq_{\cal L}\phi$ for every $\phi\in\pp_{M}(\br a)$. But being quantifier-free, $\alpha$ is already in $\pp_{A}(\br a)$, so $\al$ makes (iii) of Lemma \ref{Fpurefg} true uniformly, and $A$ is $\cal L$-pure in $M$.

If, conversely, $A$ is $\cal L$-pure in  the $\cal K$-Mittag-Leffler $M$, then $\pp_{M}(\br a)$ is  $\cal L$-generated by a single pp formula $\phi$ and it suffices that this be $\cal L$-implied by a quantifier-free formula $\al\in\pp_{M}(\br a)$.  By purity,  $\phi$ is $\cal L$-implied by some $\psi$ in $\pp_{A}(\br a)$. Write $\psi$ as $\exists\br y \theta(\br x, \br y)$, where $\theta$ is quantifier-free, and pick a witness $\br b\in A$ for $ \theta(\br a, \br y)$. As $\br a$ generates $A$, we can write $\br b$ as a term in $\br a$ or, put in more linear algebraic terminology, there's a matrix $\mtx B$ over the ring such that $\br b=\mtx B\br a$. Let $\al(\br x)$ be the formula $ \theta(\br x, \mtx B\br x)$. It is quantifier-free and in $\pp_{A}(\br a)\subseteq\pp_{M}(\br a)$. 
The  sequence of implications $\al\sim \theta(\br x, \mtx B\br x)\leq \exists\br y \theta(\br x, \br y)\sim\psi\leq_{\cal L}\phi$ concludes the proof.
\end{proof}

\begin{rem}\label{specialfp}
 Applied to the case where a $\cal K$-Mittag-Leffler module $M$ itself is generated by $\br a$, this lemma shows that $\pp_M(\br a)$ is $\cal L$-generated by a quantifier-free formula. 
 
 Together with Remark \ref{qfsharpimpl}, this entails that finitely generated $\sharp$-atomic modules are atomic (that is, $\RMod$-atomic). Thus finitely generated  $R_R$-Mittag-Leffler modules are Mittag-Leffler and hence finitely presented (a fact presumably known since \cite{Goo}).

\end{rem}

\begin{thm} 
A module $M$ is finitely $\cal L$-pure separable by finitely generated  $\cal K$-Mittag-Leffler modules if and only if every finite tuple in $M$ can be extended  to a finite tuple whose pp type  is $\cal L$-generated by a quantifier-free pp formula.

If $R_R\in\langle\cal K\rangle$  (or, equivalently, $_R\sharp\subseteq\langle\cal L\rangle$), then such a 
module $M$ is finitely $\cal L$-pure separable by finitely presented modules.
\end{thm}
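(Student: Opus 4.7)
The plan is to prove both directions using the lemma preceding the theorem (characterizing quantifier-free $\cal L$-generation in terms of $\cal L$-purity of the submodule generated by the tuple), transferring $\cal L$-finite generation between $\pp_N(\br a)$ and $\pp_M(\br a)$ via $\cal L$-purity; the second assertion will then follow from the second part of Remark \ref{specialfp}.

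For ($\Leftarrow$), I take a finite tuple $\br c$ in $M$ and, by hypothesis, extend it to $\br a$ whose pp type $\pp_M(\br a)$ is $\cal L$-generated by a quantifier-free pp formula $\alpha$. The first direction of the preceding lemma (which does not require $M$ to be $\cal K$-Mittag-Leffler) gives that $N:=\langle\br a\rangle_M$ is $\cal L$-pure in $M$. Since $\alpha$ is quantifier-free, $\alpha\in\pp_N(\br a)$; and Lemma \ref{Fpurefg} yields $\pp_N(\br a)\sim_{\langle\cal L\rangle}\pp_M(\br a)$, so $\alpha$ also $\cal L$-generates $\pp_N(\br a)$. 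Thus Lemma \ref{Latfg} shows $N$ is $\cal L$-atomic, i.e., $\cal K$-Mittag-Leffler, and it contains $\br c$, giving the desired finite $\cal L$-pure separation.

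For ($\Rightarrow$), let $\br c$ be a finite tuple in $M$; by hypothesis it lies in a finitely generated $\cal L$-pure $\cal K$-Mittag-Leffler submodule $N\subseteq M$. Choose a finite generating tuple $\br a_0$ of $N$ and set $\br a:=(\br c,\br a_0)$, which still generates $N$ and extends $\br c$. By the opening observation of Remark \ref{specialfp} (applied to the $\cal K$-Mittag-Leffler module $N$ generated by $\br a$), there is a quantifier-free pp formula $\alpha\in\pp_N(\br a)$ that $\cal L$-generates $\pp_N(\br a)$. Since pp formulas are preserved under passing up to supermodules, $\alpha\in\pp_M(\br a)$; and $\cal L$-purity of $N$ in $M$ (Lemma \ref{Fpurefg}) gives $\pp_N(\br a)\sim_{\langle\cal L\rangle}\pp_M(\br a)$, so $\alpha$ also $\cal L$-generates $\pp_M(\br a)$, as required.

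For the final clause, assume $R_R\in\langle\cal K\rangle$, equivalently ${_R\sharp}\subseteq\langle\cal L\rangle$; then $\leq_{\langle\cal L\rangle}$ refines $\leq_{\sharp}$, so every $\cal L$-atomic module is $\sharp$-atomic. The finitely generated $\cal K$-Mittag-Leffler submodule $N$ produced above is therefore a finitely generated $\sharp$-atomic module, hence by the second part of Remark \ref{specialfp} it is Mittag-Leffler, and so by Cor.\,\ref{Cor}(4) it is finitely presented. The main obstacle is purely bookkeeping---tracking where each formula lives (quantifier-free formulas descend to submodules, pp formulas ascend to supermodules) and when $\cal L$-purity is invoked to move an $\cal L$-generator between $\pp_N(\br a)$ and $\pp_M(\br a)$; the substance of the argument is supplied by the preceding lemma together with Lemmas \ref{Fpurefg} and \ref{Latfg}.
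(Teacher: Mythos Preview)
Your proof is correct and follows essentially the same approach as the paper, resting on the preceding lemma (and its restatement in Remark~\ref{specialfp}) for both directions. The only cosmetic difference is in ($\Rightarrow$): the paper first observes that $M$ itself is $\cal K$-Mittag-Leffler (Remark~\ref{sepML}(1)) and then applies the converse of the lemma directly in $M$, whereas you work inside $N$ via Remark~\ref{specialfp} and transfer to $M$ by $\cal L$-purity; you also spell out the final clause, which the paper leaves implicit.
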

\begin{proof}
Suppose $M$ is finitely $\cal L$-pure separable by finitely generated  $\cal K$-Mittag-Leffler modules. 
Then $M$ is $\cal K$-Mittag-Leffler (see the remark at the beginning of this section), and  the second part (the converse) of the lemma applies to prove the direction from left to right.

For the converse, let $\br c$ be a finite subset of $M$ (written as a tuple). By hypothesis, it can be extended to a tuple $\br a$ whose pp type is  $\cal L$-generated by a quantifier-free formula. Call the submodule it generates $C$. By the lemma, $C$ is $\cal L$-pure in $M$. Finally, $C$ is    $\cal L$-atomic by Remark \ref{LLatfg}, hence $\cal K$-Mittag-Leffler.
\end{proof}

\section{Countably generated modules} \label{countablygenerated}
The previous section points to the importance of countably generated $\cal K$-Mittag-Leffler modules as building blocks. The classical ones, i.e., the  countably generated Mittag-Leffler modules (when $\cal K=\ModR$) are precisely the countably generated pure-projectives in $\RMod$ (and this fact is essential ingredient in fact (1) at the very top of this paper). 
For general $\cal K$, this is no longer the case---in fact, it can be seen that it is the case exactly when $\langle\cal K\rangle=\ModR$. In this final section we prove the best approximation 
we were able to find, namely,  Thm.\,\ref{???Existence}, which characterizes countably generated $\cal K$-Mittag-Leffler modules as the uniformly  $\cal L$-pure images of what is called $\cal L$-$\omega$-limits in Section \ref{ctblechains}. 

%


\subsection{Descending chains of pp formulas}
We  first  refine the method of successive realization of pp formulas as follows.

 \begin{rem}\label{Lchain}
 Replacing, in the proof of Proposition \ref{ctblesep}, each $\phi_i$ by its conjunction with $\alpha_{i-1}$, we may (for every $i>0$) ensure that not only $\phi_{i-1}\geq\phi_i$, but even $\alpha_{i-1}\geq \alpha_i$. Note the role of the variables: $\alpha_{i-1}(\bar x_0, \ldots, \bar x_i)\geq \alpha_i(\bar x_0, \ldots, \bar x_i, \bar x_{i+1})$, hence also $\alpha_{i-1}(\bar x_0, \ldots, \bar x_i)\geq \exists\bar x_{i+1} \alpha_i(\bar x_0, \ldots, \bar x_i, \bar x_{i+1})$ (which is equivalent to $\phi_i$). So, not only do we have a descending chain $\phi_0\geq\phi_1\geq\phi_2\geq\ldots$ of pp formulas, but it can be chosen so that the underlying quantifier-free formulas $\alpha_i$ also form a descending chain. 
 Further, as $\phi_{i+1}\in p_{i+1}$, we see that $\exists\bar x_{i+1}\phi_{i+1}$ is contained in the type $p_i$ and therefore $\cal L$-implied by $\phi_i$. So, on top of the `implications'  $\phi_0\geq\phi_1\geq\phi_2\geq\ldots$ and thus,  for all $j>i$, the implications  $\exists \bar x_{i+1} \ldots \exists \bar x_{j}\phi_{j}\leq\phi_{i}$ (in $\RMod$),  we also have the 
converse $\cal L$-implications and therefore $\cal L$-equivalences  $\exists \bar x_{i+1} \ldots \exists \bar x_{j}\phi_{j}\sim_{\cal L}\phi_{i}$ (which may hold only in members of $\cal L$). 

\begin{definition}
 A  chain $\phi_0\geq\phi_1\geq\phi_2\geq\ldots$ as in the above remark is called  an \emph{$\cal L$-chain}.
\end{definition}

A model theory of  $\cal L$-chains may be developed that allows to speak about their free realizations, radicals (in the sense of Herzog, cf.\,\cite{HR}), and the like.
\end{rem}

\begin{lem}\label{existLchain}
Every countable sequence generating a (countably generated)  $\cal K$-Mittag-Leffler module can be arranged as a sequence of tuples satisfying an $\cal L$-chain of pp formulas.
\end{lem}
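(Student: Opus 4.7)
The plan is to reuse the successive-realization construction from the proof of Proposition~\ref{ctblesep} with $M$ itself playing the role of its own ($\cal L$-pure) submodule, and to combine it with the monotone refinement noted in Remark~\ref{Lchain}. By Theorem~\ref{MThm}, $M$ is $\cal L$-atomic, which supplies an $\cal L$-generator for every finitely realized pp type, and this is really all the input needed.

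Concretely, given the generating sequence $(c_i)_{i<\omega}$, I would form tuples $\bar a_i$ inductively: set $\bar a_0=c_0$, and at each later step choose an $\cal L$-generator of $p_i:=\pp_M(\bar a_0,\ldots,\bar a_i)$, then (following Remark~\ref{Lchain}) replace it by its conjunction with the quantifier-free formula $\alpha_{i-1}$ obtained at the previous stage. This conjunction still $\cal L$-generates $p_i$, because $\alpha_{i-1}\in p_i$ by construction of $\bar a_i$: the entries of $\bar a_i$ include the witnesses $\bar a'_i$ realizing $\alpha_{i-1}$, together with $c_i$ itself, for which $\alpha_{i-1}$ is simply padded by a dummy variable. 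Writing the resulting $\phi_i$ as $\exists\bar x'_{i+1}\,\alpha_i(\bar x_0,\ldots,\bar x_i,\bar x'_{i+1})$, the new kernel is $\alpha_i=\alpha_i^{\mathrm{old}}\wedge\alpha_{i-1}\leq\alpha_{i-1}$. Choosing witnesses $\bar a'_{i+1}$ in $M$ and letting $\bar a_{i+1}$ be the concatenation of $c_{i+1}$ with $\bar a'_{i+1}$ completes the induction. Each $c_i$ is an entry of $\bar a_i$, so these tuples still generate $M$.

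Verifying that $(\phi_i)$ is an $\cal L$-chain is then exactly the calculation spelled out in Remark~\ref{Lchain}: the built-in monotonicity $\alpha_{i-1}\geq\alpha_i$ forces the descent $\phi_{i-1}\geq\phi_i$ in $\RMod$, and this iterates to $\exists\bar x_{i+1}\cdots\exists\bar x_j\phi_j\leq\phi_i$ for $j>i$; the converse $\cal L$-implication holds because $\phi_j\in p_j$ places $\exists\bar x_{i+1}\cdots\exists\bar x_j\phi_j$ inside $p_i$, which is $\cal L$-generated by $\phi_i$. I do not anticipate a substantive obstacle; the only care required is the bookkeeping of variables when padding $\alpha_{i-1}$ to match the variable tuple arising at stage $i$, and this is no different from what Proposition~\ref{ctblesep} already handles.
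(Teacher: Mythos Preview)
Your argument is essentially correct for a slightly weaker statement, but it does not quite prove the lemma as formulated. The lemma asserts that the \emph{given} generating sequence $(c_i)_{i<\omega}$ can be \emph{arranged} into tuples satisfying an $\cal L$-chain---that is, the entries of the $\bar a_i$ should be drawn from the $c_i$ themselves, so that the concatenation of the $\bar a_i$ is a rearrangement of the original list. In your construction the tuple $\bar a_{i+1}$ is the concatenation of $c_{i+1}$ with arbitrary witnesses $\bar a'_{i+1}\in M$; these witnesses need not be among the $c_j$, so your tuples are not a rearrangement of $(c_i)$ but a strictly larger generating family. What you have actually proved is that \emph{some} countable generating sequence can be so arranged, not that the given one can.

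The missing idea, and the only substantive point of the paper's proof beyond Proposition~\ref{ctblesep} and Remark~\ref{Lchain}, is this: since the $c_j$ generate $M$, any witness tuple can be written as $\mtx B\,\bar a_{i+1}$ for some matrix $\mtx B$ over $R$ and some tuple $\bar a_{i+1}$ of generators. One then replaces $\alpha_i(\bar x_0,\ldots,\bar x_i,\bar x_{i+1})$ by $\alpha_i(\bar x_0,\ldots,\bar x_i,\mtx B\,\bar x_{i+1})$, which is again quantifier-free and is satisfied by $(\bar a_0,\ldots,\bar a_{i+1})$, and one absorbs $c_{i+1}$ into $\bar a_{i+1}$ as before. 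With this adjustment the $\bar a_i$ consist solely of generators, and after discarding repetitions one obtains a genuine rearrangement of $(c_i)$. Your verification that the resulting $(\phi_i)$ is an $\cal L$-chain goes through unchanged.
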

\begin{proof} Let $N$ be a $\cal K$-Mittag-Leffler module generated by $c_i$  ($i<\omega$). As in Lemma \ref{ctblesep} (and the following remark) we choose tuples $\bar a_0, \bar a_1, \bar a_2, \ldots$ satisfying an $\cal L$-chain of pp formulas, except, this time we choose them only among the generators, i.e., we rearrange the $c_0, c_1, c_2, \ldots$ in such a sequence of tuples. This can be accomplished as follows.

Each time we choose a witness, we express it in terms of the generators. To be specific, when we choose a witness for $\bar x_{i+1}$ in $\phi_i = \exists\bar x_{i+1} \alpha_i(\bar x_0, \ldots, \bar x_i, \bar x_{i+1})$ we  rewrite it in the generators, i.e., we write it as $\mtx B\bar a_{i+1}$, where $\bar a_{i+1}$ is a tuple of generators and $\mtx B$ a matrix over the ring.
Replacing the original $\alpha_i$ by $\alpha_i(\bar x_0, \ldots, \bar x_i, \mtx B\bar x_{i+1})$, we may assume that  $\alpha_i(\bar x_0, \ldots, \bar x_i, \bar x_{i+1})\in \qf(\bar a_0, \ldots, \bar a_i, \bar a_{i+1})$ and $\phi_i=\exists\bar x_{i+1}\alpha_i $.

As before, we also make sure to include the next $c_{i+1}$ from the list of generators in our choice of $\bar a_{i+1}$.  Filling in redundant generators and deleting those that are already listed in $\bar a_0, \ldots, \bar a_i$, we may ensure that they are pairwise disjoint and that their concatenation followed by $\bar a_{i+1}$ constitutes another initial segment of the original list of generators.
\end{proof}

\subsection{Countable chain limits}\label{ctblechains}

\begin{definition}\label{Lpure}\textbf{}
 \begin{enumerate}[\upshape(1)]
\item  Call a chain $M_0 \stackrel{g_0}\to M_1 \stackrel{g_1}\to \ldots \stackrel{g_{k-1}}\to M_k \stackrel{g_k}\to\ldots$ ($k<\omega$)

---by extension also its $\omega$-limit---
 \emph{$\cal L$-pure on images} if the map $g_{i+1}$ is $\cal L$-pure on $\im g_i$ (in the sense of Def.\,\ref{restricted}), for every  $i\in I$.
\item An  \emph{\Lo-limit} 
is an $\omega$-limit of finitely presented modules that is $\cal L$-pure on images.
\item {\rm (Notation)} Given such a chain, we  fix the connecting maps $f_i^{i}$, the identity map on $M_i$,  $f_i^{i+1}=g_i$, and  $f_i^{j+1} = g_{j}\ldots g_i$ for $i < j< \omega$; and  the limit  $M_\omega$ with the canonical maps $f_i: M_i \to M_\omega$ where $f_j f_i^j = f_i$ for all $i\leq j$.

\end{enumerate}
\end{definition}

The definition makes sense for more general direct systems and limits, but this is all we need here.


\begin{lem}\label{LlimitLpure}
Given such a chain  that is $\cal L$-pure on images,  the connecting maps $f_{i+1}^j$ ($j>i$), as well as the canonical map $f_{i+1}$, are   $\cal L$-pure  on $\im g_i$, for all $i$. 

If, in addition, $R_R\in\langle\cal K\rangle$, all these maps are  monic and the  $g_i(M_i)$ are finitely presented.


\end{lem}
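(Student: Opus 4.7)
The plan is to prove the first clause by induction on $j>i+1$, with the base case $j=i+2$ being exactly the hypothesis that $g_{i+1}=f_{i+1}^{i+2}$ is $\cal L$-pure on $\im g_i$. For the inductive step I factor $f_{i+1}^{j+1}=g_j\circ f_{i+1}^j$ and note that, for any tuple $\bar c\in\im g_i$, the intermediate image $f_{i+1}^j(\bar c)=g_{j-1}(f_{i+1}^{j-1}(\bar c))$ automatically lies in $\im g_{j-1}$; hence the hypothesis that $g_j$ is $\cal L$-pure on $\im g_{j-1}$ applies to it and gives $\pp_{M_j}(f_{i+1}^j(\bar c))\sim_{\langle\cal L\rangle}\pp_{M_{j+1}}(f_{i+1}^{j+1}(\bar c))$. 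Composed with the inductive equivalence $\pp_{M_{i+1}}(\bar c)\sim_{\langle\cal L\rangle}\pp_{M_j}(f_{i+1}^j(\bar c))$ this closes the induction.

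For the canonical map $f_{i+1}$ I invoke Rem.\,\ref{limittruth}: a pp formula lies in $\pp_{M_\omega}(f_{i+1}(\bar c))$ iff it lies in $\pp_{M_j}(f_{i+1}^j(\bar c))$ for all sufficiently large $j$. The containment $\pp_{M_{i+1}}(\bar c)\subseteq\pp_{M_\omega}(f_{i+1}(\bar c))$ is automatic from preservation of pp formulas by homomorphisms; conversely, any $\phi\in\pp_{M_\omega}(f_{i+1}(\bar c))$ witnesses itself in some $\pp_{M_j}(f_{i+1}^j(\bar c))$, which by the first step is $\langle\cal L\rangle$-equivalent to $\pp_{M_{i+1}}(\bar c)$, so $\phi$ is $\langle\cal L\rangle$-implied by a formula in the latter. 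By Rem.\,\ref{Limpltypes} this gives the missing $\langle\cal L\rangle$-implication, so the two types are $\langle\cal L\rangle$-equivalent.

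Now assume $R_R\in\langle\cal K\rangle$. Monicity on $\im g_i$ of all the maps just treated is immediate from Lemma \ref{specialmonic}(2): every map $\cal L$-pure on a submodule is monic on it. The real content of the last clause is finite presentation of $g_i(M_i)$. Here the key point is that Lemma \ref{specialmonic}(1) upgrades $\cal L$-purity on $\im g_i$ to the stronger equality $\qf_{M_{i+1}}(\bar c)=\qf_{M_{i+2}}(g_{i+1}(\bar c))$; iterating along the chain gives $\qf_{M_{i+1}}(\bar c)=\qf_{M_j}(f_{i+1}^j(\bar c))$ for every $j\geq i+1$. Since $M_j$ is finitely presented, its pp types are finitely generated (Lemma \ref{Latfg} applied with $\cal L=\RMod$), and then so are its quantifier-free types (Lemma \ref{qfgen}). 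Quantifier-free types pass verbatim to submodules—they merely record linear relations—so $\qf_{\im g_i}(\bar c)=\qf_{M_{i+1}}(\bar c)$ is finitely generated. Taking $\bar c=g_i(\bar a_i)$ for a generating tuple $\bar a_i$ of $M_i$, this says precisely that the annihilator of $\bar c$ is a finitely generated submodule of $R^{|\bar a_i|}$, whence $\im g_i\cong R^{|\bar a_i|}/\ann(\bar c)$ is finitely presented.

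I expect the finite presentation step to be the main obstacle, since it requires threading Lemma \ref{specialmonic}(1) and Lemma \ref{qfgen} together with the fact that pp (and hence qf) types in a finitely presented module are finitely generated; the two purity clauses are, by contrast, a formal induction plus the standard tail characterization of pp truth in a direct limit.
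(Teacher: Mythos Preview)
Your treatment of the first two clauses is correct and essentially identical to the paper's: the $\cal L$-purity of $f_{i+1}^j$ on $\im g_i$ is exactly the transitivity of $\langle\cal L\rangle$-equivalence of types (your induction just makes this explicit), and for the canonical map $f_{i+1}$ both you and the paper invoke Rem.\,\ref{limittruth} to pull a formula $\phi\in\pp_{M_\omega}(f_{i+1}(\bar c))$ back to some $M_k$ and then use the already-established $\langle\cal L\rangle$-equivalence with $\pp_{M_{i+1}}(\bar c)$. The monicity clause via Lemma~\ref{specialmonic}(2) is likewise fine.

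The gap is in your finite-presentation step. What you actually establish is that $\qf_{M_{i+1}}(\bar c)$ is finitely generated \emph{in the paper's sense}: there exists a pp formula $\phi\in\pp_{M_{i+1}}(\bar c)$ with $\phi\le\qf_{M_{i+1}}(\bar c)$. This does \emph{not} say that $\ann(\bar c)\subseteq R^{|\bar c|}$ is a finitely generated submodule. The generator $\phi$ is allowed to be an arbitrary pp formula in the ambient pp type, not a quantifier-free one; only a quantifier-free generator would encode finitely many linear relations and hence a finite generating set for $\ann(\bar c)$. (Concretely: pass to a free realization $(P,\bar p)$ of $\phi$; you get $\ann(\bar c)=\ann(\bar p)$, but $\langle\bar p\rangle_P$ is merely a finitely generated submodule of a finitely presented module, which over a non-coherent ring need not be finitely presented.) Note also that your detour through $M_j$ for $j>i{+}1$ is idle: everything you extract about $\qf_{M_j}(f_{i+1}^j(\bar c))$ is already available in $M_{i+1}$, so the $\cal L$-purity hypothesis is not actually doing work in your argument as written.

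The paper proceeds differently and much more tersely: it cites Remark~\ref{specialfp}, whose content is that a finitely generated $\cal K$-Mittag-Leffler module is finitely presented when $R_R\in\langle\cal K\rangle$. The intended input is therefore that $g_i(M_i)$ is itself $\cal L$-atomic---a statement about pp types computed \emph{in} $g_i(M_i)$, not merely about qf types viewed inside $M_{i+1}$---and this is the step your argument never reaches.
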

\begin{proof} The assertion on the connecting maps follows directly from the transitivity of $\cal L$-equivalence of types. For the second, we have to show that, given $\bar{b}$ in $M_i$, the type $p=\pp_{M_{i+1}}(g_i(\bar{b}))$ and the type $q=\pp_M(f_{i+1}(g_i(\bar{b})))$  are $\cal L$-equivalent. As $p\subseteq q$, we have (even) $q\leq p$ and it remains to verify that, conversely, every $\phi\in q$ is $\cal L$-implied by $p$. 

By Rem.\,\ref{limittruth}, $\phi$ is eventually true in the tail of $f_{i+1}(g_i(\bar{b}))$ starting at $g_i(\bar{b})$. Pick any $M_k$ with $k$ above $i+1$, where $f_{i+1}^k(g_i(\bar{b}))$ satisfies $\phi$. The pp type of this tuple being $\cal L$-equivalent to $p$ by the first part, we see that, in particular, $p$ $\cal L$-implies $\phi$.

The moreover clause follows from Lemma \ref{specialmonic} and  Remark \ref{specialfp}.
 \end{proof}

\begin{rem}\label{gi}
(1) To avoid any misconception, while the \Lo-limit $M_\omega$ is always the direct limit of the chain of monomorphisms $g_0(M_0) \hookrightarrow g_1(M_1) \hookrightarrow g_2(M_2)\ldots$, these images may no longer be finitely presented. 
 
 \noindent
(2)  If $\cal S$ is an ascending chain of coherent modules that is $\cal L$-pure on images, then its direct limit is  a direct limit of an $\cal L$-pure chain of finitely presented modules. Namely, the images of all the maps along the chain are themselves finitely presented and $\cal L$-pure in the next member of the chain. In particular, countably generated left $\cal K$-Mittag-Leffler modules over left coherent rings are `unions' of $\cal L$-pure chains of finitely presented modules (and conversely).
\end{rem}

\begin{lem}\label{LoLatomic}
 Every \Lo-limit is $\cal L$-atomic, hence $\cal K$-Mittag-Leffler.
\end{lem}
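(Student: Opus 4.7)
The plan is to reduce finite generation of pp types in the limit to finite generation in the finitely presented approximants via Lemma \ref{LlimitLpure}. Concretely, pick any tuple $\bar{m}$ in the \Lo-limit $M_\omega$. Since $M_\omega$ is a direct limit, $\bar{m}$ comes from some approximant: there is $i<\omega$ and $\bar{b}$ in $M_i$ with $\bar{m}=f_i(\bar{b})=f_{i+1}(g_i(\bar{b}))$. So $\bar{m}$ is the image under $f_{i+1}$ of the tuple $g_i(\bar{b})$, which lies in $\im g_i\subseteq M_{i+1}$.

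Now $M_{i+1}$ is finitely presented, hence Mittag-Leffler by Cor.\,\ref{Cor}(4), and therefore $\cal L$-atomic for \emph{every} $\cal L$ (since $\leq\,\Rightarrow\,\leq_{\cal L}$, an $\RMod$-generator of a type is automatically an $\cal L$-generator). Thus $q:=\pp_{M_{i+1}}(g_i(\bar b))$ is $\cal L$-finitely generated, say by some $\phi\in q$.

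The key step is to transfer this to the limit. Since the chain is $\cal L$-pure on images, Lemma \ref{LlimitLpure} tells us that $f_{i+1}$ is $\cal L$-pure on $\im g_i$, which gives exactly that $q$ and $p:=\pp_{M_\omega}(\bar{m})$ are $\langle\cal L\rangle$-equivalent. Invoking the remark right before Lemma \ref{Latfg} (preservation of $\cal L$-finite generation under $\langle\cal L\rangle$-equivalence of pp types), we conclude that $p$ is $\cal L$-finitely generated as well. As $\bar{m}$ was arbitrary, $M_\omega$ is $\cal L$-atomic, and by the Main Theorem (Thm.\,\ref{MThm}) it is $\cal K$-Mittag-Leffler.

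There is really no hard step here: everything is packaged into Lemma \ref{LlimitLpure} and the transfer remark. The only thing to verify carefully is that the ``restricted'' $\cal L$-purity on $\im g_i$ (Def.\,\ref{restricted}) does give the full $\langle\cal L\rangle$-equivalence of pp types of the specific tuple $g_i(\bar{b})$ and its image $\bar{m}=f_{i+1}(g_i(\bar{b}))$---but that is the very definition of being $\cal L$-pure on that submodule, applied to the tuple $\bar{c}:=g_i(\bar b)$.
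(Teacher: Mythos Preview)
Your proof is correct and follows essentially the same route as the paper's: lift a tuple in $M_\omega$ to some $g_i(\bar b)\in\im g_i\subseteq M_{i+1}$, use that $M_{i+1}$ is finitely presented hence $\cal L$-atomic, and then invoke Lemma \ref{LlimitLpure} to transfer $\cal L$-finite generation along the $\langle\cal L\rangle$-equivalence of pp types. You have simply made explicit the citations (Cor.\,\ref{Cor}(4) and the remark preceding Lemma \ref{Latfg}) that the paper leaves implicit.
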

\begin{proof}
 Consider a  chain as in \ref{Lpure}(1) where all $M_i$ are finitely presented. Every tuple in its limit   $M_\omega$ is the image under a canonical map of a $g_i$-image in $M_{i+1}$ for some $i$. By the previous lemma, its pp type, $q$, in $M_\omega$ is $\cal L$-equivalent to that of the image tuple in $M_{i+1}$. The latter module being $\cal L$-atomic, we see that also $q$ is $\cal L$-finitely generated.
 \end{proof}

\begin{rem}
The proof shows the same for  a limit  of a chain $\cal L$-pure on images of \emph{arbitrary} $\cal K$-Mittag-Leffler modules.
\end{rem}

Next we establish a connection between \Lo-limits and $\cal L$-chains of pp formulas.

\begin{notat}\label{MPhiDefined}
Suppose  $\Phi=(\phi_i : i<\omega)$ is an $\cal L$-chain with corresponding chain of quantifier-free formulas $(\alpha_i : i<\omega)$  as in Remark \ref{Lchain}.
Every quantifier-free pp formula $\alpha_i$ has a  free realization $(M_i, \bar p_{0}^i, \ldots, \bar p_{i+1}^i)$ with $M_i$ finitely presented and generated by the realizations $\bar p_{0}^i, \ldots, \bar p_{i+1}^i$ of $\alpha_i$. 
 Then $(M_{i+1}, \bar p_0^{i+1}, \ldots, \bar p_{i+1}^{i+1})$ realizes $\alpha_i$, hence  there is a map 
$$g_{i}: (M_i, \bar p_{0}^i, \ldots, \bar p_{i+1}^i)\to (M_{i+1}, \bar p_0^{i+1}, \ldots, \bar p_{i+1}^{i+1}).$$

Note, $(M_i, \bar p_{0}^i, \ldots, \bar p_{i}^i)$ is a free realization of $\phi_i$ for all $i$.

Define the connecting maps $f_i^j$ and the canonical maps $f_i: M_i \to M_\omega$  as in \ref{Lpure}(3). To indicate the origin of this limit, we may denote $M_\omega$ by $M_\Phi$. By the \emph{$i$th tail} in  $M_\Phi$ we mean the tail $\{ p_{i}^j | j\geq i\}$ and denote its limit in  $M_\Phi$ by $\bar q_i$, i.e., 
 $\bar q_i =f_i(\bar p_{i}^i)$ for $i<\omega$ (hence $\bar q_i =f_j(\bar p_{i}^j)$ for any $j\geq i$).

For every $i$, let $\Phi_i:=\{ \exists \bar x_{i+1} \ldots \exists \bar x_{j} \phi_j  : i\leq j<\omega\}$ (this is also a chain of pp formulas, but in finitely many free variables; for $j=i$, the formula is taken to be just $\phi_i$).
\end{notat}

\begin{lem}\label{basicchainstuff}\textbf{}
\begin{enumerate}[\upshape (1)]
\item The tuples $\bar q_0, \bar q_1, \bar q_2, \ldots$ generate the module  $M_\Phi$.
\item For every $i$, the type $\pp_{M_\Phi}(\bar q_0, \ldots, \bar q_i)$ is equivalent to $\Phi_i$.
 \item $\pp_{M_\Phi}(\bar q_0, \ldots, \bar q_i)$ is $\cal L$-generated by  $\phi_i$. 
 \end{enumerate}
\end{lem}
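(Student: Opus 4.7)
The plan is to take the three items in order, relying throughout on the free-realization setup of Notation~\ref{MPhiDefined}, on Remark~\ref{limittruth} for evaluating pp formulas in the direct limit, and on the $\cal L$-equivalences supplied by Remark~\ref{Lchain}.

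I would start with (1), which is purely a bookkeeping fact about how the chosen maps act on the distinguished tuples. By construction each $g_i$ sends $\bar p_j^i$ to $\bar p_j^{i+1}$ for $0\leq j\leq i+1$, so an easy induction on $k\geq i$ yields $f_i^k(\bar p_j^i)=\bar p_j^k$ whenever $j\leq i+1$, and passing to the limit gives $f_i(\bar p_j^i)=\bar q_j$ in that range. Since $M_i$ is generated by $\bar p_0^i,\ldots,\bar p_{i+1}^i$, its image $f_i(M_i)$ is generated by $\bar q_0,\ldots,\bar q_{i+1}$, and $M_\Phi=\bigcup_i f_i(M_i)$ is therefore generated by the whole family $\{\bar q_j:j<\omega\}$.

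For (2) the key input is that $(M_j,\bar p_0^j,\ldots,\bar p_j^j)$ freely realizes $\phi_j$, so $\pp_{M_j}(\bar p_0^j,\ldots,\bar p_j^j)$ is $\RMod$-generated by $\phi_j$. A standard projection calculation (existentially quantifying the last $j-i$ tuples of variables) then shows that $\pp_{M_j}(\bar p_0^j,\ldots,\bar p_i^j)$ is $\RMod$-generated by $\exists \bar x_{i+1}\ldots\exists \bar x_j\,\phi_j$. Applying Remark~\ref{limittruth} to the tail $\{(\bar p_0^j,\ldots,\bar p_i^j):j\geq i\}$ of $(\bar q_0,\ldots,\bar q_i)$ identifies $\pp_{M_\Phi}(\bar q_0,\ldots,\bar q_i)$ with the union (as a filter) of the types $\pp_{M_j}(\bar p_0^j,\ldots,\bar p_i^j)$ for $j\geq i$, which is precisely the filter generated by $\Phi_i$.

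Item (3) is then immediate: by Remark~\ref{Lchain} each member $\exists \bar x_{i+1}\ldots\exists \bar x_j\,\phi_j$ of $\Phi_i$ is $\cal L$-equivalent to $\phi_i$, so $\phi_i$ $\cal L$-generates $\Phi_i$ and, by (2), also $\pp_{M_\Phi}(\bar q_0,\ldots,\bar q_i)$; note that $\phi_i$ is itself the $j=i$ member of $\Phi_i$, hence actually lies in the type. I do not expect a serious obstacle; the only delicate point is keeping the indexing of variables and tuples in step---specifically, that projecting a free realization of $\phi_j$ onto its first $i+1$ tuples really produces a free realization of $\exists \bar x_{i+1}\ldots\exists \bar x_j\,\phi_j$---which is routine.
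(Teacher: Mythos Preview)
Your argument is correct and follows essentially the same route as the paper: (1) by construction, (2) via Remark~\ref{limittruth} and the fact that projecting the free realization $(M_j,\bar p_0^j,\ldots,\bar p_j^j)$ of $\phi_j$ onto its first $i+1$ tuples freely realizes $\exists\bar x_{i+1}\ldots\exists\bar x_j\,\phi_j$, and (3) from (2) together with the $\cal L$-equivalences defining an $\cal L$-chain. Your write-up is slightly more explicit than the paper's on the bookkeeping in (1), but the ideas are identical.
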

\begin{proof} 
(1) is clear from the construction. (3) follows from (2), since all the formulas in $\Phi_i$ are $\cal L$-equivalent to $\phi_i$ in an  $\cal L$-chain. 
 
 To prove (2), first note that, by  the defining properties of direct limit, $\Phi_i\subseteq \pp_{M_\Phi}(\bar q_0, \ldots, \bar q_i)$, for $f_j( \bar p_{0}^j, \ldots, \bar p_{i}^j)=(\bar q_0, \ldots, \bar q_i)$ for every $j\geq i$ (remember, pp formulas are preserved under homomorphisms).
 
 Now let $\phi=\phi(\bar x_0, \ldots, \bar x_i)\in \pp_{M_\Phi}(\bar q_0, \ldots, \bar q_i)$. Then there must be $j\geq i$ such that $(P_j, \bar p_{0}^j, \ldots, \bar p_{i}^j)\models \phi$. But then,  
 as $(P_j, \bar p_{0}^j, \ldots, \bar p_{j}^j)$ freely realizes $\phi_j$, the formula $\exists \bar x_{i+1} \ldots \exists \bar x_{j} \phi_j $ is freely realized by $(P_j, \bar p_{0}^j, \ldots, \bar p_{i}^j)$ and thus implies  $\phi$, as desired.
\end{proof}

\begin{rem}  If $\sharp\subseteq\langle\cal L\rangle$, then, by Lemma \ref{qfgen},  $\phi_i$  generates the corresponding quantifier-free type in all of $\RMod$.
\end{rem}

\begin{lem}\label{MPhiisLo}
 For every $\cal L$-chain  $\Phi=(\phi_i : i<\omega)$, the limit $M_\Phi$ is an \Lo-limit (hence $\cal K$-Mittag-Leffler). Conversely, every  \Lo-limit  is of the form $M_\Phi$ for some $\cal L$-chain $\Phi$.
\end{lem}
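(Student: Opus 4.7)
Given an $\cal L$-chain $\Phi$, Notation \ref{MPhiDefined} produces a direct system $(M_i^\Phi, g_i)$ of finitely presented modules with colimit $M_\Phi$. To show $M_\Phi$ is an \Lo-limit I would verify that each map $g_{i+1}\colon M_{i+1}^\Phi\to M_{i+2}^\Phi$ is $\cal L$-pure on $\im g_i$. Since $\im g_i$ is generated in $M_{i+1}^\Phi$ by the tuple $(\bar p_0^{i+1},\ldots,\bar p_{i+1}^{i+1})$, the restricted version of Lemma \ref{Fpurefg} (indicated in the remark after Def.\,\ref{restricted}) reduces this to the $\langle\cal L\rangle$-equivalence of the pp type of this tuple in $M_{i+1}^\Phi$ and of its $g_{i+1}$-image $(\bar p_0^{i+2},\ldots,\bar p_{i+1}^{i+2})$ in $M_{i+2}^\Phi$. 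From the free-realization structure, these pp types are generated (in $\RMod$) by $\phi_{i+1}$ and by $\exists\bar x_{i+2}\phi_{i+2}$ respectively, as projections of the free realizations of $\alpha_{i+1}$ and $\alpha_{i+2}$. The $\cal L$-chain condition delivers $\phi_{i+1}\sim_{\cal L}\exists\bar x_{i+2}\phi_{i+2}$, and Lemma \ref{LoLatomic} then yields the $\cal K$-Mittag-Leffler conclusion.

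\textbf{Converse direction, overall plan.} For an \Lo-limit $N$ with defining chain $(M_i, g_i)_{i\geq 0}$, I would read an $\cal L$-chain directly off the chain data (rather than off a generating sequence of $N$, where controlling purity at the base would be delicate). Concretely: pick compatible generating tuples $\bar p_j^i$ ($j\leq i+1$) of each $M_i$ with $g_i(\bar p_j^i)=\bar p_j^{i+1}$ for $j\leq i+1$; let $\alpha_i$ be a quantifier-free pp formula freely realized by $(M_i,\bar p_0^i,\ldots,\bar p_{i+1}^i)$ (which exists since $M_i$ is finitely presented on this generating tuple); and set $\phi_i=\exists\bar x_{i+1}\alpha_i$. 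The descending chain $\alpha_0\geq\alpha_1\geq\ldots$ is immediate from pp preservation under the $g_i$ combined with free realization of $\alpha_i$. For the key equivalence $\phi_i\sim_{\cal L}\exists\bar x_{i+1}\phi_{i+1}$ at $i\geq 1$, the sub-tuple $(\bar p_0^i,\ldots,\bar p_i^i)$ lies in $\im g_{i-1}\subseteq M_i$, so the \Lo-limit hypothesis---$\cal L$-purity of $g_i$ on $\im g_{i-1}$---yields the required equivalence through the same free-realization argument used in the forward direction. Each $M_i^\Phi$ is then canonically isomorphic to $M_i$ through its free realization, the connecting maps match by uniqueness, and so $M_\Phi\cong N$.

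\textbf{Main obstacle.} The one real gap is the base case $\phi_0\sim_{\cal L}\exists\bar x_1\phi_1$: the \Lo-limit definition imposes $\cal L$-purity of $g_{i+1}$ on $\im g_i$ only for $i\geq 0$, putting no condition on $g_0$ itself, whereas the argument above would demand $\cal L$-purity of $g_0$ on $(\bar p_0^0)$. My plan is to sidestep this by first truncating the chain: replace $(M_i)_{i\geq 0}$ by the cofinal tail $(M_{i+1})_{i\geq 0}$, whose direct limit is still $N$, and reindex. In this truncated chain I would choose the first generator block $\bar p_0^0$ to lie inside $\im g_0^{\mathrm{old}}\subseteq M_1^{\mathrm{old}}$---say as $g_0^{\mathrm{old}}(\bar e^0)$ for some generating tuple $\bar e^0$ of $M_0^{\mathrm{old}}$---and include whatever further generators of $M_1^{\mathrm{old}}$ are needed as $\bar p_1^0$. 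The \Lo-limit hypothesis at original index $0$, namely $\cal L$-purity of $g_1^{\mathrm{old}}$ on $\im g_0^{\mathrm{old}}$, then supplies exactly the purity required at the new base case, all higher indices go through as in the overview, and the canonical identifications deliver $M_\Phi\cong N$.
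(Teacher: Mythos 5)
Your proposal follows the paper's argument essentially verbatim in both directions: free realizations of the $\alpha_i$ identify the relevant pp-type generators as $\phi_{i+1}$ and $\exists\bar x_{i+2}\phi_{i+2}$ (resp.\ $\exists\bar x_{i+1}\alpha_{i+1}$ and $\exists\bar x_{i+1}\exists\bar x_{i+2}\alpha_{i+2}$), and the $\cal L$-chain condition, resp.\ $\cal L$-purity on images, supplies the required equivalences. The base-case ``obstacle'' you flag is resolved in the paper by exactly your truncation device, compressed into the parenthetical remark that one may have to drop $M_0$ to make the indices match, so it is not a genuine gap.
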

\begin{proof}
 To prove the first assertion, we show $M_\Phi$ is $\cal L$-pure on images. The image of $g_i$  is the submodule of $M_{i+1}$ generated by $\bar p_0^{i+1}, \ldots, \bar p_{i+1}^{i+1}$. That is, we have to verify that the pp type of this tuple of $i+2$ tuples in  $M_{i+1}$ is $\cal L$-equivalent to 
 $\pp_{M_{i+2}}(\bar p_0^{i+2}, \ldots, \bar p_{i+1}^{i+2})$. 
 Now, $(M_{i+1}, \bar p_0^{i+1}, \ldots, \bar p_{i+1}^{i+1})$ is a free realization of $\phi_{i+1}$, hence this formula generates $\pp_{M_{i+1}}(\bar p_0^{i+1}, \ldots, \bar p_{i+1}^{i+1})$. Similarly,  $\phi_{i+2}$ generates the type
  $\pp_{M_{i+2}}(\bar p_0^{i+2}, \ldots, \bar p_{i+2}^{i+2})$, hence $\exists \bar x_{i+2}\phi_{i+2}$ generates $\pp_{M_{i+2}}(\bar p_0^{i+2}, \ldots, \bar p_{i+1}^{i+2})$. But in an $\cal L$-chain, the formulas $\exists \bar x_{i+2}\phi_{i+2}\leq\phi_{i+1}$ are in fact $\cal L$-equivalent, hence so are the generated types, as desired.
  
  For the converse, consider any \Lo-limit $M_\omega$ as in  \ref{Lpure}. Being finitely presented, the module $M_0$, has a tuple of generators, $\bar p^0_0$, that freely realizes a quantifier-free pp formula $\alpha_0(\bar x_0)$. Extend its image $\bar p^1_0 := g_0(\bar p^0_0)$ to a generating tuple $(\bar p^1_0, \bar p^1_1)$ freely realizing a quantifier-free pp formula $\alpha_1(\bar x_0, \bar x_1)$, and so on. As $g_1$ is $\cal L$-pure on $\im g_0$, the submodule of $M_1$ generated by $\bar p^1_0$, we see that its pp type in $M_1$ is $\cal L$-equivalent to the pp type of  $\bar p^2_0 := g_1(\bar p^1_0)$ in $M_2$. Now the former is generated by $\exists \bar x_1\alpha_1(\bar x_0, \bar x_1)$, while the latter is generated by $\exists \bar x_1\exists \bar x_2\alpha_2(\bar x_0, \bar x_1, \bar x_2)$, which are therefore $\cal L$-equivalent too. Continuing like this we see that the chain $\exists \bar x_1\alpha_1\geq \exists \bar x_2\alpha_2\geq \ldots$ forms an $\cal L$-chain, whose limit is $M_\omega$. (One may have to drop $M_0$ to make the indices match, but that is irrelevant in the limit.)
 \end{proof}

 \subsection{Structure of countably generated Mittag-Leffler modules}
\begin{thm}[Characterization]
\label{???Existence} {\hspace{1mm}}

\begin{enumerate}[\upshape(1)]
\item  The following are equivalent for any given module $N$.
\begin{enumerate}[\upshape(i)]
\item $N$  is a countably  generated $\cal K$-Mittag-Leffler module.
\item $N$  is a countably  generated and  some (in fact every) countable sequence of generators  can be arranged as a sequence of tuples satisfying an $\cal L$-chain of pp formulas.
\item $N$ is  a uniform $\cal L$-pure image of an \Lo-limit.
  \end{enumerate}
  
\item  If $R_R\in \langle\cal K\rangle$  or $M_\Phi\in \langle\cal L\rangle$,  the countably generated $\cal K$-Mittag-Leffler modules  are precisely  the \Lo-limits, which, in turn, are unions of $\cal L$-pure chains of  finitely presented modules.
   
\end{enumerate}
\end{thm}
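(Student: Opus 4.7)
Plan. I will prove part (1) by the round of implications (i) $\Rightarrow$ (ii) $\Rightarrow$ (iii) $\Rightarrow$ (i), and then refine the construction from (ii) $\Rightarrow$ (iii) to obtain part (2). Of these, (i) $\Rightarrow$ (ii) is exactly Lemma \ref{existLchain}, and (iii) $\Rightarrow$ (i) combines two facts already established in Section \ref{Fpure}: every \Lo-limit is $\cal K$-Mittag-Leffler by Lemma \ref{LoLatomic}, and uniform $\cal L$-pure epimorphic images of $\cal K$-Mittag-Leffler modules are $\cal K$-Mittag-Leffler by Lemma \ref{Lpureepic}; countable generation is evidently preserved under quotients. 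The real work lies in (ii) $\Rightarrow$ (iii) and in the injectivity argument for (2).

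For (ii) $\Rightarrow$ (iii), I start from generators $\bar a_0, \bar a_1, \ldots$ of $N$ arranged so that $p_i := \pp_N(\bar a_0, \ldots, \bar a_i)$ is $\cal L$-generated by $\phi_i$, where $(\phi_i)$ is an $\cal L$-chain with quantifier-free layers $(\alpha_i)$ as produced in the proof of Lemma \ref{existLchain}. Form the module $M_\Phi$ of Notation \ref{MPhiDefined}; it is an \Lo-limit by Lemma \ref{MPhiisLo}. Since the tuple $(\bar a_0, \ldots, \bar a_{i+1})$ realizes the quantifier-free formula $\alpha_i$ in $N$, the universal property of the free realizations $(M_i, \bar p^{i}_0, \ldots, \bar p^{i}_{i+1})$ supplies compatible maps $M_i \to N$ sending $\bar p^{i}_k \mapsto \bar a_k$, which assemble to a surjection $g: M_\Phi \to N$ with $g(\bar q_i) = \bar a_i$. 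To check that $g$ is uniformly $\cal L$-pure, write an arbitrary tuple $\bar m$ in $N$ as $\mtx B(\bar a_0, \ldots, \bar a_i)$ and lift it to $\mtx B(\bar q_0, \ldots, \bar q_i)$ in $M_\Phi$: by Lemma \ref{basicchainstuff}(3) and the choice of $\phi_i$, both $\pp_{M_\Phi}(\bar q_0, \ldots, \bar q_i)$ and $\pp_N(\bar a_0, \ldots, \bar a_i)$ are $\cal L$-generated by $\phi_i$, and the device of Lemma \ref{Latfg} transports this $\cal L$-equivalence to the $\mtx B$-derived tuples.

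For part (2), I upgrade this surjection $g$ to an isomorphism under either of the stated hypotheses. Suppose $z \in M_\Phi$ satisfies $g(z) = 0$ and write $z = \mtx C(\bar q_0, \ldots, \bar q_i)$; then the quantifier-free formula $\mtx C \bar x \doteq 0$ lies in $\pp_N(\bar a_0, \ldots, \bar a_i)$, which is $\cal L$-generated by $\phi_i$, so $\phi_i \leq_{\cal L} \mtx C \bar x \doteq 0$. If $M_\Phi \in \langle \cal L\rangle$, this $\cal L$-implication is a literal implication in $M_\Phi$, and since $(\bar q_0, \ldots, \bar q_i) \models \phi_i$ in $M_\Phi$ we conclude $z = 0$. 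If instead $R_R \in \langle \cal K\rangle$, the $\cal L$-equivalence established in the previous paragraph shows that $g$ is $\cal L$-pure on the submodule generated by $(\bar q_0, \ldots, \bar q_i)$, so Lemma \ref{specialmonic}(1), applied to the one-tuple $z$, gives $\qf_{M_\Phi}(z) = \qf_N(g(z)) = \qf_N(0)$, whence $z = 0$. The moreover clause follows from Lemma \ref{LlimitLpure} together with Remark \ref{gi}: under $R_R \in \langle \cal K\rangle$ the $g_i(M_i)$ are finitely presented and embed monically into $M_\Phi$ via the $\cal L$-pure canonical maps $f_{i+1}$, exhibiting $M_\Phi$ as a union of an $\cal L$-pure chain of finitely presented submodules.

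The main obstacle is the bookkeeping between $\cal L$-implication and genuine pp-implication inside $M_\Phi$ and $N$: the (ii) $\Rightarrow$ (iii) construction only yields uniform $\cal L$-pureness, not honest purity, and injectivity in part (2) is precisely the step at which one must convert the $\cal L$-implication $\phi_i \leq_{\cal L} \mtx C \bar x \doteq 0$ into an implication holding in $M_\Phi$ itself. This is exactly what the two alternative hypotheses deliver, either directly via $M_\Phi \in \langle \cal L\rangle$ or, under $R_R \in \langle \cal K\rangle$, via the quantifier-free preservation supplied by Lemma \ref{specialmonic}(1).
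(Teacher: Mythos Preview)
Your proof is correct and follows essentially the same route as the paper: the same circle of implications, the same construction of $M_\Phi$ and the map $h$ from free realizations, and the same two-case injectivity argument for part (2). The only cosmetic difference is that for the uniform $\cal L$-purity of $h$ you invoke Lemma \ref{basicchainstuff}(3) together with the matrix device of Lemma \ref{Latfg}, whereas the paper cites Lemma \ref{LlimitLpure} directly; both amount to the observation that $\pp_{M_\Phi}(\bar q_0,\ldots,\bar q_i)$ and $\pp_N(\bar a_0,\ldots,\bar a_i)$ are each $\cal L$-generated by $\phi_i$.
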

\begin{proof}
(1). As \Lo-limits are countably generated, Lemmas \ref{Lpureepic} and \ref{LoLatomic} show that (iii) implies (i).  By Lemma \ref{existLchain}, (i) implies (ii). For the remaining implication, assume (ii), that is, a countable sequence of tuples $\bar a_0, \bar a_1, \bar a_2, \ldots$ realizing an $\cal L$-chain $\Phi$ as in Remark \ref{Lchain}. Construct the \Lo-limit $M_\Phi$ according to Notation \ref{MPhiDefined}. By Lemma \ref{MPhiisLo}, $M_\Phi$ is an \Lo-limit and we are going to exhibit a uniform $\cal L$-pure epimorphism from it to $N$.

As   $(P_i, \bar p_{0}^i, \ldots, \bar p_{i+1}^i)$ is a free realization of $\alpha_i$, we may send it to $(N, \bar a_{0}, \ldots, \bar a_{i+1})$ by a  map $h_i$.   Clearly,  the maps $h_i$ and $h_j f_i^j$ coincide on the
generators $\bar p_{0}^i, \ldots, \bar p_{i+1}^i$ of $P_i$. Hence $h_i=h_j f_i^j$, and the universal property of direct limit yields a map $h: M_\Phi\to N$ such that $h_i=h f_i$. Note, 
 $h(\bar q_i)=\bar a_i$ for all $i$.

 That $h$ is surjective is immediate from generator considerations. 
 By Lemma \ref{LlimitLpure},  every tuple in $N$ has a preimage under $h$ with $\cal L$-equivalent pp type, whence $h$ is uniformly  $\cal L$-pure. 
 
 (2) Under the extra assumption on $\cal K$, the epimorphism $h$ is a monomorphism by Lemma \ref{specialmonic}, hence an isomorphism. The same holds true if $M_\Phi\in \langle\cal L\rangle$, for then the $\cal L$-equivalent types of, say $\bar m$, in $M_\Phi$ and its image in $N$ are equivalent on $M_\Phi$, so equality of two entries in  $h(\bar m)$ would imply that of the corresponding entries in $\bar m$.
 
 Further, Lemma \ref{LlimitLpure} implies that \Lo-limits are  unions of the finitely presented $g_i(M_i)$.
\end{proof}

 \subsection{The case $\cal L=\flat$} \label{Pruefer}
 In general, the connection between the limit and its uniformly pure image may be quite loose, as we demonstrate in a final note.

Consider $R=\mathbb{Z}$ (or any other domain) and let $\cal L=\flat$ (the torsion-free abelian groups). As was noted in \cite{Tf}, any torsion group is $\sharp$-Mittag-Leffler. Equivalently, every torsion group is $\flat$-atomic, which can also be seen thus:  the formula $rx\doteq 0$ is $\flat$-equivalent to $x\doteq 0$ and thus $\flat$-implies every pp formula. So, whatever torsion formula is contained in the pp type of an element, it  $\flat$-generates its pp type. The same applies to finite tuples  of torsion elements, since the conjunction of the annihilation formulas  $\flat$-implies that the tuple must be the zero tuple and therefore  $\flat$-implies the entire pp type of the tuple.
For the same reason, any two tuples of the same length of torsion elements have $\flat$-equivalent pp types, which is responsible for the fact that uniform $\flat$-purity does not say much. 

Let me  illustrate this further by looking at two different kinds of $\flat$-limit that have a uniform $\flat$-pure image in common. For the first, consider any  quantifier-free chain of the form $r_0x_0\doteq0\geq r_0x_0\doteq0\wedge r_1x_1\doteq0 \geq \ldots$. Let $\tau_i$ denote its $i$th member, i.e., the conjunction $\bigwedge_{j\leq i} r_jx_j\doteq0$. As the variables are completely separate and $0$ is always there as a witness, we see that this chain is a $\flat$-chain (as a matter of fact, even an $\RMod$-chain). Denote the corresponding limit by $T$. It is not hard to see from the construction in Notation \ref{MPhiDefined} that $T$ is the union of the finitely generated groups $P_i=\bigoplus_{j\leq i} \mathbb Z/r_j\mathbb Z$, i.e., $T$ is isomorphic to $\bigoplus_{j< \omega} \mathbb Z/r_j\mathbb Z$. This group is pure-projective and can be mapped onto any other  group generated by a sequence of elements annihilated by those ring elements $r_i$, respectively.  As noted before, any such group is $\flat$-atomic (i.e., $\sharp$-Mittag-Leffler) and hence, by (the proof of) Thm.\ \ref{???Existence}, a uniform  $\flat$--pure image of $T$. 

So \emph{every} image of $T$ is uniformly $\flat$-pure.
Setting $r_i=p^{i+1}$ in the above, we see that the Pr\"ufer group $\mathbb Z_{p^\infty}$ is  a uniform  $\flat$--pure image of $T=\bigoplus_{i< \omega} \mathbb Z/p^{i+1}\mathbb Z$. 

Of course, letting $\alpha_i$ be the formula  $px_0 \doteq 0\ \wedge\  \bigwedge_{j<i} x_j\doteq px_{j+1}$, we obtain a much more meaningful pp chain in $\mathbb Z_{p^\infty}$. As each of these formulas implies torsion in all variables, again they $\flat$-imply just any pp formula, whence these  form a $\flat$-chain. This shows that the Pr\"ufer groups  are  themselves $\flat$-limits.

Namely, a free realization of $\alpha_i$ is given by $(\mathbb Z/p^i \mathbb Z, p^{i-1}c_i, p^{i-2}c_i, \ldots, c_i)$, where $c_i$ is a generator of that group. Obviously, the corresponding map $\rho_i$ from the $i$th free realization to the $(i+1)$st sends $p^j c_i$ to $p^{j+1} c_{i+1}$ and the limit of this chain is (isomorphic to) $\mathbb Z_{p^\infty}$ itself.  This isomorphism is also the uniform $\flat$-pure epimorphism in question. 

Note, the chain of these $\alpha_i$ is obviously also a $\sharp$-chain, whence  the Pr\"ufer groups  are  also $\sharp$-limits.

\bibliographystyle{amsplain}

\end{document}